\theoremstyle{definition}
\newtheorem{definition}{Definition}[section]
\newtheorem{ex}[definition]{Example}
\newtheorem{rem}[definition]{Remark}
\theoremstyle{plain}
\newtheorem{prop}[definition]{Proposition}
\newtheorem{lem}[definition]{Lemma}
\newtheorem{coro}[definition]{Corollary}
\newtheorem{teo}[definition]{Theorem}
\newfont{\bbb}{msbm10 scaled\magstephalf}     
\def\C{\mathbb C}
\def\K{\mathbb K}
\def\R{\mathbb R}
\def\cod{\operatorname{cod}}
\def\R{\mbox{\bbb R}}
\def\O{\mathcal O}
\def\A{\mathscr A}
\def\Lift{\operatorname{Lift}}
\def\Derlog{\operatorname{Derlog}}
\def\Aecod{\operatorname{\mbox{$\A_e$-cod}}}
\def\Acod{\operatorname{\mbox{$\A$-cod}}}
\def\Kecod{\operatorname{\mbox{$\mathscr K_e$-cod}}}
\def\Kcod{\operatorname{\mbox{$\mathscr K$-cod}}}
\title{The extra-nice dimensions}
\author{R. Oset Sinha, M. A. S. Ruas, R. Wik Atique}
\date{}
\address{Departament de Matem\`atiques,
Universitat de Val\`encia, Campus de Burjassot, 46100 Burjassot,
Spain}
\email{raul.oset@uv.es}
\address{Instituto de Ci\^encias Matem\'aticas e de Computa\c{c}\~ao - USP,
Av. Trabalhador s\~ao-carlense, 400 - Centro,
CEP: 13566-590 - S\~ao Carlos - SP, Brazil}
\email{maasruas@icmc.usp.br}
\email{rwik@icmc.usp.br}
\thanks{The first author is partially supported by DGICYT Grant MTM2015--64013--P. The second author is partially supported by FAPESP grant no. 2014/00304-2
and CNPq grant no. 306306/2015-8.  The
third author is partially supported by FAPESP grant no.
2015/04409-6.}
\subjclass[2000]{Primary 58K40; Secondary 58K20, 32S05} \keywords{nice dimensions, simple germs, pseudo-isotopies}
\begin{document}
\begin{abstract}
We define the extra-nice dimensions and prove that the subset of locally stable 1-parameter families in $C^{\infty}(N\times[0,1],P)$, also known as pseudo-isotopies, is dense if and only if the pair of dimensions $(\dim N, \dim P)$ is in the extra-nice dimensions. This result is parallel to Mather's characterization of the nice dimensions as the pairs $(n,p)$ for which stable maps are dense.
The extra-nice dimensions are characterized by the property that discriminants of stable germs in one dimension higher have $\mathcal A_e$-codimension 1 hyperplane sections. They are also related to the simplicity of $\mathcal A_e$-codimension 2 germs. We give a sufficient condition for any
$\mathscr A_e$-codimension 2 germ to be simple and give an example
of a corank 2 codimension 2 germ in the nice dimensions which is not simple.
Then we establish the boundary of the extra-nice dimensions. Finally we answer a question posed by Wall about the codimension of
non-simple maps.
\end{abstract}

\maketitle

\section{Introduction}

Around the middle of last century Whitney formulated the concept of stability of smooth maps  and characterized stable singularities in dimensions $(n,p)$ with
$p \geq 2n$, $(n,2n-1)$, $(2,2)$ showing that in these cases stable maps are dense in the space of $C^{\infty}$ maps. He then conjectured that this holds in any pair $(n,p)$. Thom showed that this is not the case (see \cite{ThomLevine}) by giving an example in  $(9,9)$ of a singularity which appears generically in a 1-parameter family of maps. This singularity  has  $\mathscr A_e$-codimension 1 and is not simple. A germ is simple if there are only a finite number of orbits nearby, therefore, in the pair of dimensions $(9,9)$ not all maps can be approximated by stable maps and so the stable maps are not dense. He then conjectured that topologically stable maps are always dense and this was proved by Mather (\cite{Mac0dense}).

In his well known series of papers about stability of $C^{\infty}$ maps Mather showed that the set of stable maps $f: N^n \to P^p$ is dense in $C_{pr}^{\infty}(N,P)$ (proper $C^{\infty}$ maps) with the Whitney strong topology if and only if the pair of dimensions $(n,p)$ is in the {\em nice dimensions} (\cite{MaV}), which he determined completely in \cite{MaVI}. Mather gave a stratification of the set $J^k(n,p)$ of $k$-jets of smooth maps by $\mathscr K$-orbits and characterized stability in terms of transversality of the jet extension of the map to this stratification.
More precisely, he defined the nice dimensions as the pairs $(n,p)$ such that there exists a Zariski closed $\mathscr K$-invariant set $\Pi(n,p)$ in $J^k(n,p),$ for sufficiently high $k,$ of codimension bigger than $n$ such that its complement in $J^k(n,p)$ is the union of finitely many $\mathscr K$-orbits.

When the pair $(n,p)$ is in the nice dimensions and the source manifold $N$ is compact, an important problem in the applications of singularity theory
to differential topology is the characterization of the simplest existing paths between two stable maps. A 1-parameter family
$F: N\times [0,1] \to P$ connecting two non-equivalent stable maps always intersects the set of non stable maps for a finite number of values of the parameter, the bifurcation points.
The classification of  singularities of bifurcation points in generic families of maps is a fundamental step in results on elimination of singularities, which is still an active field of research
(\cite{levine}, \cite{behrenshayano}), and on various results about the topology of the space of smooth maps such as work by J. Cerf (\cite{cerf}) or K. Igusa (\cite{igusa}) or even Vassiliev's theory of topological invariants (\cite{vassiliev}).

We say that a family $F: N\times [0,1] \to P$ is a {\it locally stable family } if $F_t: N \to P$ is a stable map for all $t \in \mathbb [0,1]$ except possibly for a finite number of values $\{t_1,\ldots, t_k\}$ and  the non-stable singularities of $F_{t_i}$ are a finite number of points $x_j \in N,$ and the map $(F, t) : N\times [0,1] \to P\times [0,1]$ is a locally $\mathcal A_e$-versal unfolding of $F_{t_i}$ for all non-stable points $x_j.$
This definition implies that the non-stable singularities of $F_{t_i}$ are $\mathcal A$-finitely determined and their $\mathcal A_e$-codimension is  equal to 1.

In this paper we obtain a parallel result to Mather's characterization of the nice dimensions. First we define the {\em extra-nice dimensions} as the pairs $(n,p)$ where there exists a smallest Zariski closed $\mathscr A$-invariant set $\Lambda(n,p)$ in $J^k(n,p),$ for sufficiently high $k,$ of codimension greater than $n+1$ whose complement is a finite number of $\mathscr A$-orbits. Then we prove that the subset of stable 1-parameter families in $C^{\infty}(N\times [0,1],P)$ is dense if and only if the pair $(n,p)$ is in the extra-nice dimensions.

 In the nice dimensions all the $\mathscr A_e$-codimension $1$ singularities are simple (see Proposition \ref{cod1simple}). Goryunov (\cite{goryunov}), Cooper, Mond and Wik Atique (\cite{robertamond}) and Houston (\cite{houstonclass}) studied the classification of germs and multigerms of $\mathscr A_e$-codimension $1$, corank $1.$
A recent work by Oset Sinha, Ruas and Wik Atique (\cite{ORW1}) defined operations that allow the classification of $\mathscr A_e$-codimension 2 multigerms in the nice dimensions and a natural question arises: are all of these simple? In Section 3 we prove that all corank $1$ $\mathscr A_e$-codimension $2$ monogerms in $(n,p)$ are simple when $(n+1,p+1)$ is in the
nice dimensions. We give a sufficient condition for any $\mathscr A_e$-codimension $2$ germ to be simple. This condition is related to stable germs in one
dimension higher having $\mathscr A_e$-codimension $1$ hyperplane
sections. We also give an example
of a corank $2$ codimension $2$ germ in the nice dimensions which is not
simple.

In Section 4 we define the extra-nice dimensions, we relate them to the simplicity of $\mathscr A_e$-codimension 2 germs and we characterize them by stable germs in one
dimension higher having $\mathscr A_e$-codimension 1 hyperplane
sections (the sufficient condition in Section 3). In Section 5 we determine the boundary of the extra-nice dimensions completely. Figure 1 shows this boundary and compares it to the boundary of the nice dimensions. In Section 6 we characterize the extra-nice dimensions by the density of locally stable 1-parameter families.

Section 2 establishes the necessary notation and basic results. Finally, in Section 7, we answer a question posed by Wall about the codimension of
non-simple maps. We define further refinements of the nice dimensions and give an example in the equidimensional case of a stratification in terms of increasing codimension of the subset of non-simple maps.

\emph{Acknowledgements:} The authors thank J. J. Nuño-Ballesteros for spotting errors in previous versions of the manuscript and T. Nishimura for helpful discussions, both helping to improve the presentation of the results.
	
\section{Notation}\label{section-notation}

We consider map-germs $f:(\K^n,S)\to (\K^p,0)$, where $\K=\R$ or $\K=\C$, and $S\subset \K^n$ a finite subset. For simplicity, we will say that $f$ is smooth if it is smooth (i.e. $C^\infty$) when $\K=\R$ or holomorphic when $\K=\C$. We denote by $\O_n=\O_{\K^n,S}$ and $\O_p=\O_{\K^p,0}$ the rings of smooth function germs in the source and target respectively, by $\mathscr M_n$ and $\mathscr M_p$ the maximal ideals of $\O_n$ and $\O_p$ respectively and by $\theta_n=\theta_{\K^n,S}$ and $\theta_p=\theta_{\K^p,0}$ the corresponding modules of vector field germs. The module of vector fields along $f$ will be denoted by $\theta(f)$. Associated with $\theta(f)$ we have two morphisms $tf:\theta_n\rightarrow \theta(f)$, given by $tf(\chi)=df\circ\chi$,
and $wf:\theta_p\rightarrow \theta(f)$, given by $wf(\eta)=\eta\circ f$. Let $f^*:\O_p\to\O_n$ be the induced map of $f$ given by composition with $f$ on the right.
Let $\mathscr G=\A_e,\A,\mathscr K_e,\mathscr K$. The $\mathscr G$-tangent space and the $\mathscr G$-codimension of $f$ are defined respectively as
$$T\A_e f=tf(\theta_n)+wf(\theta_p),\quad
\Aecod(f)=\dim_\K\frac{\theta(f)}{T\A_e f},$$
$$T\A f=tf(\mathscr M_n\theta_n)+wf(\mathscr M_p\theta_p),\quad
\Acod(f)=\dim_\K\frac{\mathscr M_n\theta(f)}{T\A f},$$
$$T\mathscr K_e f=tf(\theta_n)+f^*\mathscr M_p\theta(f),\quad
\Kecod(f)=\dim_\K\frac{\theta(f)}{T\mathscr K_e f},$$
$$T\mathscr K f=tf(\mathscr M_n\theta_n)+f^*\mathscr M_p\theta(f),\quad
\Kcod(f)=\dim_\K\frac{\mathscr M_n\theta(f)}{T\mathscr K f}.$$
It follows from Mather's infinitesimal stability criterion \cite{MaII} that a germ is stable if and only if its $\A_e$-codimension is 0. We refer to Wall's survey paper \cite{Wall} and Nu\~no-Ballesteros and Mond book \cite{nunomond} for general background on the theory of
singularities of mappings.

There are some relations between the different codimensions. One between the $\A$-codimension and the $\A_e$-codimension
due to L. Wilson~\cite{wilson} (a proof can be found in \cite{riegerwilson}):
$$\Aecod(f)=\Acod(f)+r(p-n)-p,$$ if $f$ has finite $\mathscr A_e$-codimension and is not stable, where $r=|S|$ is the number of branches. And another one between the $\mathscr K$-codimension and the $\mathscr K_e$-codimension, which can be found in \cite{Wall}:
$$\Kecod(f)=\Kcod(f)+r(p-n).$$

We say that $f:(\K^n,0)\to (\K^p,0)$ has corank $r$ if $df(0)$ has rank $\min{n,p}-r$.

We say that $f$ has finite singularity type or it is $\mathscr K$-finite if
$\Kecod(f)<\infty$. Another remarkable result of Mather is that $f$ has finite singularity type if and only if it admits an $s$-parameter stable unfolding (see \cite{Wall}). We recall that an $s$-parameter unfolding of $f$ is another map-germ
$$F:(\K^n\times\K^s,S\times\{0\})\to (\K^p\times\K^s,0)
$$
of the form $F(x,\lambda)=(f_\lambda(x),\lambda)$ and such that $f_0=f$.

Along the paper, we use the notation of small letters $x_1,\dots,x_n,\lambda_1,\dots,\lambda_r$ for the coordinates in $\K^n\times\K^r$ and capital letters $X_1,\dots,X_p,\Lambda_1,\dots,\Lambda_r$ for the coordinates in $\K^p\times\K^r$.

A multigerm $f=\{f_1,\ldots,f_r\}:(\K^n,S)\rightarrow(\K^p,0)$ with
$S=\{x_1,\ldots,x_r\}$ is simple if there exists a finite number of
$\mathscr A$-classes such that for every
unfolding
$F:(\K^n\times\K^s,S\times\{0\})\rightarrow(\K^p\times\K^s,0)$ with
$F(x,\lambda)=(f_{\lambda}(x),\lambda)$ and $f_0=f$ there exists a
sufficiently small neighbourhood $U$ of $S\times\{0\}$ such that for
every $(y_1,\lambda),\ldots,(y_r,\lambda)\in U$ where
$F(y_1,\lambda)=\ldots=F(y_r,\lambda)$ the multigerm
$f_{\lambda}:(\K^n,\{y_1,\ldots,y_r\})\rightarrow(\K^p,f_{\lambda}(y_i))$
lies in one of those finite classes.

\begin{definition} Let $f:(\K^n,S)\to (\K^p,0)$ be a smooth map-germ.
A vector field germ $\eta\in \theta_p$ is called \textit{liftable
over $f$}, if there exists $\xi\in\theta_n$ such that
$df\circ\xi=\eta\circ f$ (i.e., $tf(\xi)=wf(\eta)$). The set of vector
field germs liftable over $f$ is denoted by $\Lift(f)$ and is an
$\mathcal{O}_p$-submodule of $\theta_p$.

\end{definition}


When $\K=\C$ and $f$ has finite singularity type, we always have the inclusion $\Lift(f)\subseteq \Derlog(\Delta(f))$,  where $\Delta(f)$ is the discriminant of $f$ (i.e., the image of non submersive points of $f$) and $\Derlog(\Delta(f))$ is the submodule of $\theta_p$ of vector fields which are tangent to $\Delta(f)$. Moreover, we have the equality $\Lift(f)=\Derlog(\Delta(f))$ in case $f$ has finite $\A_e$-codimension (see \cite{damon,nunomond}).

\begin{definition}\label{aug}
Let $h:(\mathbb{K}^n,S)\rightarrow (\mathbb K^p,0)$ be a map-germ
with a 1-parameter stable unfolding $H(x,\lambda)=(h_{\lambda}(x),\lambda)$. Let $g:(\mathbb K^q,0)\rightarrow (\mathbb K,0)$ be a function-germ. Then, the \textit{augmentation of h by H and g}
is the map $A_{H,g}(h)$ given by $(x,z)\mapsto (h_{g(z)}(x),z)$. A map which is not an augmentation is called primitive.
\end{definition}

In Theorem 4.4 in \cite{houston} it is proved that $$\Aecod(A_{H,g}(h))\geq\Aecod(h)\tau(g)$$ where $\tau$ is the Tjurina number of the function $g$. Equality is reached if $g$ is quasihomogeneous or $H$ is a substantial unfolding (see \cite{houston}). In this paper we will only use the particular case when $g$ is a Morse function and so $\Aecod(A_{H,g}(h))=\Aecod(h)$.



Following Mather in \cite{MaIV} if $f:(\K^n,0)\to (\K^p,0)$ has finite singularity type then there is a stable germ $F:(\mathbb{K}^{n+s},0)\rightarrow (\mathbb K^{p+s},0)$ and a germ of an immersion $i:(\mathbb{K}^n,0)\rightarrow (\mathbb K^{p+s},0),$ $i\pitchfork F, $  such that  $f$  is the pull-back of $F$ by $i$ in the diagram:

\begin{equation*}
\begin{CD}
(\mathbb{K}^{n+s},0) @>{F}>> (\mathbb K^{p+s},0)  \\ @AA{}A        @A{i}AA   \\
(\K^n,0) @>{f}>> (\K^p,0)
\end{CD}
\end{equation*}

Any germ $f$ is a pull-back of a stable $s$-parameter unfolding $F$
by the natural inclusion $i.$  Damon (\cite{damon}, for $\K=\mathbb{C}$) and Houston (\cite{houston}, for $\K=\mathbb{R}$) proved that
$\Aecod(f)=\mathscr K_{\Delta(F),e}$-cod$(i)$, where
$$\mathscr K_{\Delta(F),e}\mbox{-cod}(i)=\dim_{\mathbb K} N\mathscr K_{\Delta(F),e}(i)=\dim_{\mathbb K}\frac{\theta(i)}{ti(\theta_p)+i^*\Lift(F)}.$$

Furthermore, if $L:(\mathbb K^{p+s},0)\longrightarrow (\mathbb K^s,0)$ is such that
 $L\circ i=0$, then $\Aecod(f)= _{\Delta(F)}\mathscr K_e$-$\cod(L)$, where $$_{\Delta(F)}\mathscr K_e\mbox{-cod}(L)=\dim_{\mathbb K} N_{\Delta(F)}\mathscr K_e(L)=\dim_{\mathbb K}\frac{\theta(L)}{tL(\Lift(F))+L^*\mathscr M_s\theta(L)}.$$

 When $s=1$ we say that $L$ defines the  hyperplane section $f$ of $F$. Besides, to obtain a hyperplane section of $\A_e$-codimension 1 it is sufficient to consider the 1-jet of $L$ (see for example \cite{mondwa}). Also notice that if $F$ is minimal, $_{\Delta(F)}\mathscr K_e$-$\cod(L)=1$ if and only if $\mathscr M_{p+1}\theta(L)\subset tL(\Lift(F))+L^*\mathscr M_s\theta(L)$.

\section{On simplicity of codimension 2 germs in the nice dimensions}

We need the following characterisation of the openness of an $\A$-orbit in the $\mathscr K$-orbit.

\begin{teo}(\cite{riegerruasmdef},\cite{ruas})\label{open}
Let $f:(\K^n,0)\rightarrow(\K^p,0)$ be a $\mathscr K$-finite germ
and let $\{v_1,\ldots,v_r\}$ be a basis for
$$N:=\frac{\theta(f)}{T\A_ef+f^*\mathscr M_p\theta(f)}.$$ The
$\A$-orbit of $f$ is open in the $\mathscr K$-orbit if and only if
$f_iv_j\in T\A f$, $i=1,\ldots,p$ and $j=1,\ldots,r$ (mod
$f^*\mathscr M_p^2 \theta(f)$).
\end{teo}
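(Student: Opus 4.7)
The plan is to translate openness of the $\A$-orbit in the $\mathscr K$-orbit into a module-theoretic tangent-space equality, reduce that equality modulo $f^*\mathscr M_p^2\theta(f)$ using Nakayama's lemma (Malgrange's preparation theorem in the smooth setting), and finally express the reduced condition in terms of the generators $v_j$ of $N$.

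First, I would move the problem into a jet space. Since $f$ is $\mathscr K$-finite it is $\mathscr A$-finitely determined, so for $k$ large enough the $\A$- and $\mathscr K$-orbits through $j^kf$ are smooth Lie group orbits in $J^k(n,p)$, with the former embedded in the latter because $T\A f\subseteq T\mathscr K f$. By the standard orbit criterion, the $\A$-orbit is open in the $\mathscr K$-orbit if and only if the two tangent spaces agree at $j^kf$; subtracting the common summand $tf(\mathscr M_n\theta_n)$, this amounts to
\[
f^*\mathscr M_p\theta(f)\subseteq T\A f + \mathscr M_n^{k+1}\theta(f).
\]

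Next, I would consider the $\O_p$-module (via $f^*$) given by
\[
M := \frac{f^*\mathscr M_p\theta(f)+T\A f}{T\A f}.
\]
The hypothesis $\Kecod(f)<\infty$, together with Malgrange's preparation theorem, implies that $\theta(f)/tf(\theta_n)$ is finitely generated over $\O_p$ via $f^*$, and hence so is $M$. On such a module the $f^*\mathscr M_p$-adic and $\mathscr M_n$-adic topologies coincide, so the previous inclusion is equivalent to $f^*\mathscr M_p\theta(f)\subseteq T\A f + f^*\mathscr M_p^2\theta(f)$; and the latter, by Nakayama's lemma applied to the finitely generated $\O_p$-module $M$, is equivalent to $f^*\mathscr M_p\theta(f)\subseteq T\A f$, that is, to $M=0$.

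Finally, I would encode the condition using the $v_j$. By the definition of $N$ there is a $\K$-linear decomposition
\[
\theta(f)=T\A_e f + f^*\mathscr M_p\theta(f) + \sum_{j=1}^r\K\, v_j.
\]
Multiplying by each $f_i\in f^*\mathscr M_p$ and using the easy inclusion $f^*\mathscr M_p\cdot T\A_e f\subseteq T\A f$ yields
\[
f^*\mathscr M_p\theta(f)\subseteq T\A f + f^*\mathscr M_p^2\theta(f) + \sum_{i,j}\K\,f_iv_j,
\]
so the inclusion $f^*\mathscr M_p\theta(f)\subseteq T\A f + f^*\mathscr M_p^2\theta(f)$ holds if and only if every $f_iv_j$ lies in $T\A f + f^*\mathscr M_p^2\theta(f)$, which is the stated condition.

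The main obstacle will be making the Nakayama step rigorous in the smooth category, where both the finite generation of $M$ over $\O_p$ and the equivalence of the two adic topologies rely on the preparation theorem; this is where the hypothesis $\Kecod(f)<\infty$ is essential. Once this technical machinery is in place, the chain of equivalences assembles in a routine manner.
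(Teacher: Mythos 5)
The paper offers no proof of this theorem; it is quoted from \cite{riegerruasmdef} and \cite{ruas}, so there is no internal argument to compare against. Your proof is essentially the standard one and is correct in substance: the chain (openness of the orbit $\Leftrightarrow$ equality of jet-level tangent spaces $\Leftrightarrow$ $f^*\mathscr M_p\theta(f)\subseteq T\A f$, via the preparation theorem and Nakayama $\Leftrightarrow$ the condition on the $f_iv_j$) is the right one, and your final reduction is exactly the point of the statement, resting on the inclusion $f^*\mathscr M_p\cdot T\A_e f\subseteq T\A f$, which you correctly isolate. Two details deserve repair. First, $\mathscr K$-finiteness does not imply $\A$-finite determinacy (the germ $(x,y)\mapsto (x^2,y^2)$ is $\mathscr K$-finite but not $\A$-finite); fortunately you do not need this claim: $\A^k$-orbits in $J^k(n,p)$ are smooth Lie group orbits for any germ, and $\mathscr K$-finiteness gives finite $\mathscr K$-determinacy, which is what makes ``open in the $\mathscr K$-orbit'' a statement that stabilizes with $k$. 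Second, $M=(f^*\mathscr M_p\theta(f)+T\A f)/T\A f$ is an $\O_p$-module but not an $\O_n$-module (because $wf(\mathscr M_p\theta_p)$ is not stable under multiplication by $\O_n$), so ``the $\mathscr M_n$-adic and $f^*\mathscr M_p$-adic topologies coincide on $M$'' is not literally meaningful as written; what you need, and what is true, is that $\mathscr M_n^N\theta(f)\subseteq T\mathscr K f$ for some $N$ (Nakayama over $\O_n$, using $\Kecod(f)<\infty$), whence $\mathscr M_n^{2N}\theta(f)\subseteq T\A f+f^*\mathscr M_p^2\theta(f)$, and the jet-level inclusion descends to the one you feed into Nakayama over $\O_p$. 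With these two adjustments the argument is complete.
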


If $f$ is stable, the $\A$-orbit is open in the $\mathscr K$-orbit.

The $\A$-orbit is open in the $\mathscr K$-orbit if and only if
$T\A f=T\mathscr K f$, so $\Acod(f)=\Kcod(f)$. By the formulas
relating the $\mathscr G$-codimension to the $\mathscr
G_e$-codimension in the previous section, this is equivalent to
$\Aecod(f)=\Kecod(f)-p$, so basically, a non stable germ has the
$\A$-orbit open in the $\mathscr K$-orbit if and only if there is no
stable germ in the $\mathscr K$-orbit before the versal unfolding.

\begin{ex}
The germ $(x^5+yx,y)$ has $\mathscr A_e$-codimension 3 but admits a 2-parameter stable unfolding, so its $\A$-orbit is not open in the $\mathscr K$-orbit. However, $(x^5+yx+x^7,y)$ has $\mathscr A_e$-codimension 2 and its $\A$-orbit is open in the $\mathscr K$-orbit (see \cite{rieger}).
\end{ex}

In Subsection \ref{cod2notsimple} we will see an example of a $\mathscr K$-orbit which does not admit an open $\mathscr A$-orbit.

The germs of $\mathscr A_e$-codimension 1 and corank 1 in the nice dimensions are well known and are hyperplane linear sections of stable germs. We review them here for sake of completeness. The case of hyperplane sections of stable corank 2 germs in $(n,n+1)$ has been studied in \cite{tesismirna}.

\begin{prop}\label{iestrella}
Let $F:(\mathbb K^{n+1},0)\longrightarrow
(\mathbb K^{p+1},0)$ be a stable  corank 1 germ, $(n,p)$ nice dimensions.
Then there exists $f:(\mathbb K^n,0)\longrightarrow (\mathbb
K^p,0)$ obtained by a hyperplane section of $F$ such that $\mathscr
A_e$-$\cod(f)=1$.
\end{prop}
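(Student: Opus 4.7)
The plan is to apply the Damon--Houston identification recalled in Section~\ref{section-notation}, namely $\Aecod(f)={}_{\Delta(F)}\mathscr K_e\text{-cod}(L)$, where $L:(\K^{p+1},0)\to(\K,0)$ is the defining equation of the hyperplane whose pull-back gives $f$. By the remark at the end of that section, the 1-jet of $L$ already determines whether the $\A_e$-codimension of $f$ equals $1$, so the search reduces to choosing an appropriate linear form. Identifying $\theta(L)\cong\O_{p+1}\cdot\partial_T$ and using $tL(\eta)=\eta(L)$, the condition ${}_{\Delta(F)}\mathscr K_e\text{-cod}(L)=1$ becomes the membership
$$\mathscr M_{p+1}\subset\{\eta(L):\eta\in\Lift(F)\}+(L)\,\O_{p+1}.$$

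Next I would exploit the structure of stable corank $1$ germs: any such $F:(\K^{n+1},0)\to(\K^{p+1},0)$ is $\A$-equivalent to a Morin-type normal form of type $A_k$ for some $k\ge 1$, in which $k-1$ of the target coordinates $Y_1,\dots,Y_{k-1}$ play the role of essential unfolding parameters. By Damon's theorem $\Delta(F)$ is a free divisor and $\Lift(F)=\Derlog(\Delta(F))$; its standard generators act on each $Y_i$ so as to produce monomials that, together with $(L)$, generate $\mathscr M_{p+1}$.

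Then I would take $L=Y_1$ and verify the claim by direct computation. In the equidimensional model
$$F(x,y_1,\dots,y_n)=\bigl(x^{k+1}+y_1 x+y_2 x^2+\dots+y_{k-1}x^{k-1},\,y_1,\dots,y_n\bigr),$$
the hyperplane $\{Y_1=0\}$ pulls back to
$$f(x,y_2,\dots,y_n)=\bigl(x^{k+1}+y_2 x^2+\dots+y_{k-1}x^{k-1},\,y_2,\dots,y_n\bigr),$$
which is the $A_k$ normal form with the single essential term $y_1 x$ omitted; this makes $f$ a $(k-2)$-parameter non-versal unfolding of $x\mapsto x^{k+1}$ that lacks exactly one parameter of the miniversal $\A_e$-unfolding, yielding $\Aecod(f)=1$. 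The same choice works in the non-equidimensional cases $p\ne n$ once the additional immersive source or target directions are carried along in the Morin normal form.

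The main technical obstacle is a uniform treatment of all corank $1$ stable Morin normal forms in the non-equidimensional setting together with an explicit description of $\Derlog(\Delta(F))$. Once these are laid out, however, the verification of the key membership — and hence of $\Aecod(f)=1$ — is routine.
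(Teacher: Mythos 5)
There is a genuine error in the key computational step, plus substantial missing cases. Your reduction via Damon's theorem to the membership $\mathscr M_{p+1}\subset\{\eta(L):\eta\in\Lift(F)\}+(L)$ is consistent with the paper's framework (it is item iv) of Proposition \ref{equiv}), but the hyperplane you then choose is the wrong one. In the normal form $F(x,y)=(x^{k+1}+y_1x+\dots+y_{k-1}x^{k-1},y_1,\dots,y_n)$ the correct codimension~$1$ section (Goryunov's result, which the paper invokes) is obtained by killing the coefficient of the \emph{highest weight} monomial $\varphi_\mu=x^{k-1}$ of $\mathcal O_1/Jg_0$, i.e.\ the hyperplane $Y_{k-1}=0$, not $Y_1=0$. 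Your choice removes the lowest-degree term $y_1x$ and already fails for $k=3$, $n=p=2$: the section $Y_1=0$ of the swallowtail $(x^4+y_1x+y_2x^2,y_1,y_2)$ is $(x^4+y_2x^2,y_2)\sim(x,xy^2+y^4)$, whose discriminant consists of two tangent fold curves and whose $\A_e$-codimension is at least $2$ (it is adjacent to Rieger's ``gulls'' $(x,xy^2+y^4+y^5)$ of codimension $2$), whereas the correct section $Y_2=0$ gives $(x^4+y_1x,y_1)\sim(x,xy+y^4)$, which does have $\A_e$-codimension $1$.

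Beyond this, your structural claim that every stable corank~$1$ germ is of Morin type $A_k$ is false for $n>p$: corank~$1$ there means the genotype is a function germ in $n-p+1\geq 2$ variables, so the local algebras $D_k$, $E_6$, $E_7$, $E_8$ also occur and must be treated (the paper handles them uniformly via the homogeneous basis of $\mathcal O_m/Jg_0$ and again the section $u_\mu=0$). The case $n<p$ cannot be absorbed by ``carrying along immersive directions'': the minimal stable corank~$1$ normal forms there are genuinely different (for $p=n+1$ they only exist when $n+1$ is even), and the codimension~$1$ section is the one given by Houston's classification. Finally, you do not address non-minimal $F$ (the paper reduces to the minimal case and then augments by a nondegenerate quadratic), nor do you actually verify the membership of $\mathscr M_{p+1}$ in any case --- you defer exactly the computation that constitutes the proof. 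So the argument as it stands is both incomplete and, where it is explicit, incorrect.
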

\begin{proof}

Suppose first that $F$ is minimal.

1) Case $n\geq p$. Let $g_0:(\mathbb K^m,0)\rightarrow(\mathbb K,0)$ be a simple function singularity of type $A_k$, $D_k$, $E_6$, $E_7$ or $E_8$, and let $\varphi_1=1,\varphi_2,\ldots,\varphi_{\mu}$ be a homogeneous basis for $\frac{\mathcal O_n}{Jg_0}$ where $\varphi_{\mu}$ is the unique highest weight term. Then, by \cite{MaIV}, the map germs $G:(\K^m\times\K^{\mu-1},0)\rightarrow(\K\times\K^{\mu-1},0)$ given by $$G(x,u_2,\ldots,u_{\mu})=(g_0(x)+\sum_{i=2}^{\mu}u_i\varphi_i,u_2,\ldots,u_{\mu})$$ are stable minimal corank 1 germs. Moreover, any stable minimal corank 1 germ is $\mathscr A$-equivalent to one of such germs. The section $u_{\mu}=0$ defines an $\mathscr A_e$-codimension 1 section, see~\cite{goryunov}.

2) Case $n<p$. By \cite{MaIV}, any stable minimal corank 1 germ is $\mathscr A$-equivalent to $G:(\K^n\times\K,0)\rightarrow(\K^p\times\K,0)$ given by
\begin{align*}
G(u_1,\ldots,u_{l-1},v_1,\ldots,v_{l-1},w_{11},w_{12},\ldots,w_{rl},y,\lambda)=(u_1,\ldots,u_{l-1},v_1,\ldots,v_{l-1},\\
w_{11},w_{12},\ldots,w_{rl},y^{l+1}+\sum_{i=1}^{l-1}u_iy^i,y^{l+2}+\sum_{i=1}^{l-1}v_iy^i+\lambda y^l,\sum_{i=1}^lw_{1i},\ldots,\sum_{i=1}^lw_{ri},\lambda),
\end{align*}
where $r=p-n-1$ and $l$ is such that $l+1$ is the multiplicity of the germ and $n=l(r+2)-1$. In particular, when $p+1=n+2$, then $r=0$ and so $n+1=2l$ is even, which means that there is no stable minimal germ $G:(\K^n\times\K,0)\rightarrow(\K^p\times\K,0)$ when $n+1$ is odd (\cite{robertamond}). The section $\lambda=0$ defines an $\mathscr A_e$-codimension 1 section, see~\cite{houstonclass}.

Now let $F$ be $\mathscr A$-equivalent to $G.$ Then there exist germs of
diffeomorphisms $\phi:(\mathbb K^n\times \mathbb K,0)\longrightarrow
(\mathbb K^n\times\mathbb K,0)$ and $\psi:(\mathbb K^p\times \mathbb
K,0)\longrightarrow (\mathbb K^p\times\mathbb K,0)$ such that
$\psi\circ F= G\circ \phi.$ By Lemma 6.1 in \cite{NORW} $d\psi(\Lift(F))=\psi^*(\Lift(G)).$ Suppose $L$  defines a hyperplane section of $G$ of $\mathscr A_e$-codimension 1. We have
$$t(L\circ\psi)(\Lift(F))+\langle L\circ\psi\rangle\theta(L\circ\psi)=tL(\psi^*(\Lift(G)))+\langle
L\circ\psi\rangle\theta(L\circ\psi)=$$
$$\psi^*(tL(\Lift(G))+\langle
L\rangle\theta(L))=\psi^*(\mathscr M_{p+1}\theta(L))=\mathscr
M_{p+1}\theta(L\circ\psi)$$ by linearity of $L$.
Therefore $L\circ\psi$ defines a hyperplane section of $F$ of $\mathscr A_e$-codimension $1.$

If $F$ is not minimal, then $F$ is $\mathscr A$-equivalent to $Id_{\mathbb
K^r}\times F'$ where $F':(\mathbb K^{n-r},0)\longrightarrow (\mathbb
K^{p-r},0)$ is minimal. So there exists an $\mathscr A_e$-codimension 1
hyperplane section $f':(\mathbb K^{n-r-1},0)\longrightarrow
(\mathbb K^{p-r-1},0)$ of $F'$. If we augment $f'$ by the
function $\phi(x_1,\ldots,x_{r+1})=x_1^2+\ldots+x_{r+1}^2$ we obtain
a germ $f:(\mathbb K^n,0)\longrightarrow (\mathbb K^p,0)$ of
$\mathscr A_e$-codimension 1.
\end{proof}

From the normal forms showed above all corank 1 $\mathscr
A_e$-codimension 1 germs in the nice dimensions are simple. We give here a proof of this fact
for any corank.

\begin{prop}\label{cod1simple}
If a pair $(n,p)$ is in the nice dimensions then all
$\mathscr A_e$-codimension 1 germs in that pair are simple.
\end{prop}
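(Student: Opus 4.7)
The plan is to exploit Mather's characterization of the nice dimensions combined with a control of $\mathscr{K}$-versus-$\mathscr{A}$ orbits. Since $\Aecod(f)=1<\infty$, the germ $f$ is $\mathscr{K}$-finite; I fix $k$ large enough that $f$ is both $k$-$\mathscr{A}$- and $k$-$\mathscr{K}$-determined. By the defining property of the nice dimensions recalled in the introduction, there exists a Zariski closed $\mathscr{K}$-invariant subset $\Pi(n,p)\subset J^k(n,p)$ of codimension greater than $n$ whose open complement $U$ is a finite union of $\mathscr{K}$-orbits $O_1,\dots,O_N$. Choosing $\Pi(n,p)$ minimally, the $\mathscr{K}$-finiteness of $f$ implies that $j^k f$ lies in $U$.

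Next, for any $s$-parameter unfolding $G:(\K^n\times\K^s,S\times\{0\})\to(\K^p\times\K^s,0)$ of $f$, continuity of the jet extension and the openness of $U$ guarantee that for $(y,\mu)$ sufficiently close to $(S,0)$ the jet $j^k g_\mu(y)$ still lies in $U$, hence belongs to one of the finitely many orbits $O_i$. The same argument applied to the multi-jet space ${}_rJ^k(n,p)$, which inherits an analogous finite $\mathscr{K}$-orbit stratification from the nice-dimensions hypothesis, handles the multi-germs that enter the definition of simplicity. Consequently, only finitely many $\mathscr{K}$-classes appear among the germs realized by $G$ near $(S,0)$.

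The final, and main, obstacle is to upgrade the finiteness of $\mathscr{K}$-classes to finiteness of $\mathscr{A}$-classes. Here I would invoke Theorem~\ref{open}: a representative of each $\mathscr{K}$-orbit $O_i$ is either stable, in which case its $\mathscr{A}$-orbit fills all of $O_i$ because $\mathscr{A}$- and $\mathscr{K}$-equivalence coincide for stable germs, or it is non-stable of finite $\mathscr{A}_e$-codimension, and then one checks the criterion of Theorem~\ref{open} to show that its $\mathscr{A}$-orbit is open in $O_i$. Hence $O_i$ is the disjoint union of this open $\mathscr{A}$-orbit and a proper closed $\mathscr{K}$-invariant subset of strictly higher codimension; iterating on codimension, each $O_i$ decomposes into finitely many $\mathscr{A}$-orbits. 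Combined with the previous paragraph, this proves that the germs of $g_\mu$ at nearby points realize only finitely many $\mathscr{A}$-classes, so $f$ is simple.
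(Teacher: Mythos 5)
There is a genuine gap, and it sits exactly at the step you flag as the ``main obstacle''. Your claim that for a non-stable representative of each $\mathscr K$-orbit $O_i$ ``one checks the criterion of Theorem~\ref{open} to show that its $\mathscr A$-orbit is open in $O_i$'' is false: the criterion of Theorem~\ref{open} fails for every non-stable germ that admits a stable unfolding with fewer parameters than its $\mathscr A_e$-codimension would force, i.e.\ precisely for the augmentations. The paper states this explicitly: for $\mathscr A_e$-codimension $1$ germs the $\mathscr A$-orbit is open in the $\mathscr K$-orbit \emph{if and only if} the germ is primitive, and the example $(x^5+yx,y)$ (and, worse, the $B_{3,3}$ orbit of Subsection~\ref{sectioncod2notsimple}, which admits \emph{no} open $\mathscr A$-orbit) shows openness cannot be taken for granted. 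Your subsequent induction also breaks down structurally: the complement of an open $\mathscr A$-orbit inside $O_i$ is $\mathscr A$-invariant but not $\mathscr K$-invariant, and ``iterating on codimension'' cannot exclude a modal family of $\mathscr A$-orbits of a fixed codimension inside a single $\mathscr K$-orbit --- which is exactly the phenomenon separating the nice from the extra-nice dimensions (Remark~\ref{codnonsimple}, Theorem~\ref{cod2notsimple}). Indeed, if your step worked it would prove that every $\mathscr K$-finite germ whose jet avoids $\Pi(n,p)$ is simple, which is much stronger than the proposition and is contradicted by the non-simple $\mathscr A_e$-codimension $2$ germs over low-codimension $\mathscr K$-orbits constructed in Sections 4--5. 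A secondary gap: ``the $\mathscr K$-finiteness of $f$ implies that $j^kf$ lies in $U$'' is not true ($\mathscr K$-finite germs of arbitrarily high $\mathscr K$-codimension have jets inside $\Pi(n,p)$); what you need, and what requires an argument, is that an $\mathscr A_e$-codimension $1$ germ has $\Kcod(f)\le n+1$ and that in the nice dimensions no such orbit can lie in the minimal bad set.

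The paper's proof avoids both problems by invoking the structure theorem of Cooper--Mond--Wik Atique and Houston: an $\mathscr A_e$-codimension $1$ germ is either primitive or an augmentation of a primitive one by a Morse function. For a \emph{primitive} germ, minimality of the miniversal unfolding forces $\dim\theta(f)/(T\mathscr A_ef+f^*\mathscr M_p\theta(f))=1$, so the hypothesis of Theorem~\ref{open} holds, the $\mathscr A$-orbit is open in the $\mathscr K$-orbit of codimension $n+1$, and non-simplicity would then produce a modal stratum of $\mathscr K$-orbits of codimension $\le n$, contradicting the nice dimensions. For an \emph{augmentation} --- exactly the case where your openness claim fails --- the explicit $\mathscr A_e$-versal unfolding $(x,z,u)\mapsto(h_{g(z)+u}(x),z,u)$ shows directly that only finitely many $\mathscr A$-classes can appear in any deformation. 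You need some substitute for this dichotomy before the passage from finitely many $\mathscr K$-classes to finitely many $\mathscr A$-classes can be made.
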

\begin{proof}
Let $f:(\mathbb K^n,0)\longrightarrow (\mathbb
K^p,0)$ be an $\mathscr A_e$-codimension 1 germ.

Recall from~\cite{robertamond} and~\cite{houston} that $f$ is either primitive or a quadratic augmentation, that is, an augmentation of a primitive $\mathscr A_e$-codimension 1 germ  $h:(\mathbb K^{n-l},0)\longrightarrow (\mathbb
K^{p-l},0)$ by a Morse function $g$.

For germs of $\mathscr A_e$-codimension 1, the $\A$-orbit is open in its $\mathscr K$-orbit if and only if the germ is primitive (see \cite{houstoncod1} or \cite{ruas}). In fact,
if $f$ is primitive then its  miniversal unfolding $F$ is minimal.  We have
$$\dim\frac{\theta(f)}{tf(\theta_n)+wf(\theta_p)}=1$$ and therefore
$$\dim\frac{\theta(f)}{tf(\theta_n)+wf(\theta_p)+f^*\mathscr M_p\theta(f)}\leq
1.$$ This dimension cannot be 0 because $F$ is minimal, therefore
$tf(\theta_n)+wf(\theta_p)+f^*\mathscr M_p\theta(f)\subset
tf(\theta_n)+wf(\theta_p)$ and so, by Theorem \ref{open}, the $\mathscr A$-orbit of $f$  is open
in the $\mathscr K$-orbit  of $f$, i.e. $\Acod(f)=\Kcod(f)$. Suppose $f$ is non simple, then there must be a modal
stratum $Y$ with $\cod_{J^k(n,p)} (Y)\leq \Acod(f)-1=\Aecod(f)+n-1=n$. So we cannot find a subset of $J^k(n,p)$ with codimension greater than $n$ whose complement is a finite union of $\mathscr K$-orbits and this contradicts the fact that we are in the nice dimensions.

When $f$ is an augmentation then $f$ is $\mathscr A$-equivalent to $A_{H,g(z)}(h)$ where $H(x,\lambda)=(h_{\lambda}(x),\lambda)$ is the versal unfolding of the primitive germ $h$ and $g$ is a Morse function. Therefore $(x,z,u)\mapsto (h_{g(z)+u}(x),z,u)$ is a versal unfolding of $f$ and $f$ can be deformed only in a finite number of $\mathscr A$-classes.
\end{proof}


\begin{prop}\label{equiv}
Let $F:(\mathbb K^n\times \mathbb
K,0)\longrightarrow (\mathbb K^p\times\mathbb K,0)$ be a stable minimal germ, the following are equivalent:
\begin{enumerate}
\item[i)] There exists $f:(\mathbb K^n,0)\longrightarrow (\mathbb K^p,0)$ such that $F$ is a versal unfolding of $f$ and its $\mathscr A$-orbit is open in its $\mathscr K$-orbit.
\item[ii)] There exists $f$ (unfolded by $F$) which is a primitive $\mathscr A_e$-codimension 1 germ.
\item[iii)] There exists an immersion $i:(\mathbb K^p,0)\to(\mathbb K^p\times\mathbb K,0)$ such that $i^*(F)$ is an $\mathscr A_e$-codimension 1 germ.
\item[iv)] There exists a linear map $L:\mathbb K^p\times\mathbb K\longrightarrow \mathbb K$ such that $\mathscr M_{p+1}$ is contained in  $L(\Lift(F))+\langle L\rangle.$
\end{enumerate}
\end{prop}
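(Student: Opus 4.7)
The plan is to prove the four conditions pairwise equivalent by translating between three points of view on an $\Aecod=1$ section of a stable germ: the intrinsic classification primitive/augmentation, the Mather--Damon pullback picture, and the defining function of the hyperplane.

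For $\mathrm{(i)}\Leftrightarrow\mathrm{(ii)}$, note that because $F$ is stable with exactly one more parameter than $f$, asking $F$ to be versal for $f$ is tantamount to $\Aecod(f)=1$. Under this codimension hypothesis, the dichotomy ``primitive iff $\mathscr A$-orbit open in the $\mathscr K$-orbit'' was recalled inside the proof of Proposition~\ref{cod1simple} (due to Houston~\cite{houstoncod1} and Ruas~\cite{ruas}) and directly converts one statement into the other.

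For $\mathrm{(iii)}\Leftrightarrow\mathrm{(iv)}$, choose a linear form $L:\mathbb K^{p+1}\to\mathbb K$ cutting out the image of $i$, so that $L\circ i=0$; conversely every nonzero such $L$ determines an $i$. The remark at the end of Section~\ref{section-notation} lets us restrict to the $1$-jet of $L$, so working with linear $L$ loses nothing. The Damon--Houston identity from Section~\ref{section-notation} gives
\[
\Aecod(i^*F)\;=\;{}_{\Delta(F)}\mathscr K_e\mbox{-cod}(L)\;=\;\dim_{\mathbb K}\frac{\theta(L)}{tL(\Lift(F))+L^*\mathscr M_1\theta(L)},
\]
which, after the identifications $\theta(L)\cong\mathcal O_{p+1}$, $L^*\mathscr M_1\theta(L)=\langle L\rangle$, and $tL(\eta)=L(\eta)$ valid for linear $L$, reduces to $\dim_{\mathbb K}\mathcal O_{p+1}/(L(\Lift(F))+\langle L\rangle)$. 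Because $L\in\mathscr M_{p+1}$, this dimension equals $1$ precisely when $\mathscr M_{p+1}\subset L(\Lift(F))+\langle L\rangle$, which is $\mathrm{(iv)}$.

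For $\mathrm{(ii)}\Leftrightarrow\mathrm{(iii)}$, the direction $\mathrm{(ii)}\Rightarrow\mathrm{(iii)}$ is just Damon's pullback theorem: every $\mathscr K$-finite germ of $\Aecod=1$ is the pullback of its stable 1-parameter unfolding by a transverse immersion of the target, so the primitive $f$ in $\mathrm{(ii)}$ supplies the immersion required by $\mathrm{(iii)}$. Conversely, if $f=i^*F$ has $\Aecod(f)=1$, then $F$ is a stable 1-parameter unfolding of $f$, hence versal, and we must show $f$ is primitive. By the augmentation formula from Section~\ref{section-notation} together with $\tau(\text{Morse})=1$, the only obstruction to primitivity at $\Aecod=1$ is $f=A_{H,g}(h)$ with $g$ Morse and $h$ primitive of $\Aecod(h)=1$. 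In that case the versal 1-parameter unfolding of $f$ is $(x,z,u)\mapsto (h_{g(z)+u}(x),z,u)$, and the change of coordinates $w=u+g(z)$ performed simultaneously in source and target turns this unfolding into $H\times\operatorname{Id}_{\mathbb K^l}$, where $l$ is the number of variables of $g$; the resulting identity factor contradicts minimality of $F$, so $f$ must be primitive. The crux of the whole argument is precisely this non-minimality step: extracting an explicit identity factor from the versal unfolding of a Morse augmentation. Once that is secured, the remaining equivalences amount to bookkeeping with the identities and pullback diagrams already collected in Section~\ref{section-notation}.
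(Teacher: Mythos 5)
Your proof is correct and follows essentially the same route as the paper's: (i)$\Leftrightarrow$(ii) via the primitive-iff-open dichotomy of \cite{ruas} and \cite{houstoncod1}, and (ii)$\Leftrightarrow$(iii)$\Leftrightarrow$(iv) via Damon's theorem, with your explicit coordinate change $w=u+g(z)$ splitting an identity factor off the versal unfolding of a Morse augmentation being precisely the content the paper compresses into ``trivial by \dots\ the fact that $F$ is minimal''. The only imprecisions are that in (i)$\Rightarrow$(ii) versality alone gives $\Aecod(f)\le 1$ rather than $=1$, and in (iii)$\Leftrightarrow$(iv) one also needs $L(\Lift(F))\subseteq\mathscr M_{p+1}$; both are settled by the minimality of $F$ (a stable $f$ would make $F$ a trivial unfolding, and every vector field liftable over a minimal stable germ vanishes at the origin), as the paper records in Section~\ref{section-notation}.
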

\begin{proof}
i) if and only if ii) can be found in \cite{ruas} or \cite{houstoncod1}. Notice that since $F$ is minimal, the $\mathscr K$-codimension of $F$ (and of $f$) must be $n+1$. If the $\mathscr A$-orbit of $f$ is open in its $\mathscr K$-orbit, then $f$ has $\A$-codimension $n+1$ and thus $\mathscr A_e$-codimension 1.  ii) if and only if iii) is trivial by Damon's Theorem on transverse fibre squares and the fact that $F$ is minimal. iii) if and only if iv) follows directly from Damon's result $\Aecod(f)= _{\Delta(F)}\mathscr K_e$-$\cod(L)$ and the fact that $i(\mathbb K^p)=L^{-1}(0)$.
\end{proof}

\begin{teo}\label{cod2simple}
Let $(n+1,p+1)$ be nice dimensions. All corank 1 $\mathscr A_e$-codimension 2 germs in $(n,p)$ are simple.
\end{teo}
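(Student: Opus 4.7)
The plan is to realize $f$ as a pullback from an $\A_e$-codimension $1$ germ in the pair $(n+1,p+1)$ (which is in the nice dimensions by hypothesis), apply Proposition \ref{cod1simple} to obtain simplicity upstairs, and then descend via Damon's correspondence.

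Since $f$ has finite singularity type, I would start by considering its minimal $2$-parameter stable unfolding $G:(\K^{n+2},0)\to(\K^{p+2},0)$, a stable corank $1$ germ. Applying Proposition \ref{iestrella} to $G$, with $(n+1,p+1)$ in the nice dimensions playing the role of the base pair, furnishes a linear form $L_1:\K^{p+2}\to\K$ whose hyperplane section $F:=L_1^*(G):(\K^{n+1},0)\to(\K^{p+1},0)$ has $\A_e$-codimension $1$. The explicit sections $\lambda=0$ and $u_\mu=0$ constructed in the proof of Proposition \ref{iestrella} depend only on the unfolding parameters of $G$, so $L_1$ may be chosen of this form; then $F$ is automatically a $1$-parameter unfolding of $f$. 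Proposition \ref{cod1simple} applied to $F$ in the nice pair $(n+1,p+1)$ immediately yields that $F$ is $\A$-simple, and by construction $G$ is the minimal stable unfolding of both $F$ and $f$.

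The main obstacle is to transfer simplicity from $F$ down to $f$. I would handle it via Damon's correspondence: the germ $f$ arises as the pullback of $G$ by a linear map $L=(L_1,L_2):\K^{p+2}\to\K^2$ with ${}_{\Delta(G)}\mathscr K_e\mbox{-cod}(L)=2$, and $\A$-simplicity of $f$ is equivalent to the ${}_{\Delta(G)}\mathscr K$-simplicity of $L$ under linear perturbations. A perturbation $L'=(L_1',L_2')$ decomposes into perturbations of each component; the simplicity of $F$ forces $L_1'$ into one of finitely many ${}_{\Delta(G)}\mathscr K$-orbits, each corresponding to an $\A$-class of $\A_e$-codimension $\leq 1$ germ $F^{(j)}$ in $(n+1,p+1)$. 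For each such representative, the remaining perturbation $L_2'$ yields a hyperplane section of $F^{(j)}$, producing an $\A_e$-codimension $\leq 2$ germ in $(n,p)$. Since $(n,p)$ is also in the nice dimensions by the monotonicity of the nice dimensions, Propositions \ref{iestrella} and \ref{cod1simple} applied at the $(n,p)$ level classify these further sections into finitely many $\A$-classes. Assembling the two finiteness statements, $L$ has only finitely many nearby ${}_{\Delta(G)}\mathscr K$-orbits, and hence $f$ is $\A$-simple.
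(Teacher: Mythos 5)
Your proposal takes a genuinely different route from the paper (which argues by contradiction, playing the codimension of the modal stratum of a hypothetical non-simple $f$ against the codimensions of $\mathscr K$-orbits in $(n+1,p+1)$), but it has two real gaps. The first is in the construction of $F$: for $F=L_1^*(G)$ to be an unfolding of $f$ you need the hyperplane $\{L_1=0\}$ to contain the codimension-two plane $\{L=0\}$ cutting $f$ out of $G$, while Proposition \ref{iestrella} only produces \emph{some} hyperplane ($u_\mu=0$ or $\lambda=0$ in Mather's normal form) with an $\A_e$-codimension $1$ section, with no control over whether it contains that plane. The two requirements are in general incompatible, and the tension is exactly the primitive/non-primitive dichotomy that the paper treats by separate arguments: if $f$ admits no $1$-parameter stable unfolding, no hyperplane through the plane defining $f$ can give a stable section, and nothing guarantees that one of them gives a section of codimension exactly $1$. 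So ``automatically'' is doing unjustified work.

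The second gap is circularity in the final step. Once $F^{(j)}$ of $\A_e$-codimension $\le 1$ is fixed, the further sections $(L_2')^*(F^{(j)})$ may have $\A_e$-codimension $2$ ($f$ itself is such a section), and Propositions \ref{iestrella} and \ref{cod1simple} say nothing about codimension $2$ germs: the former gives existence of a single codimension $1$ hyperplane section of a stable germ, the latter simplicity of codimension $1$ germs. Finiteness of the $\A$-classes among the nearby codimension $2$ sections is precisely the assertion of the theorem, so nothing has been established. A warning sign is that your final step makes no essential use of corank $1$; if the argument were valid it would apply equally to the corank $2$ germ of Theorem \ref{cod2notsimple}, which lives in $(5,5)$ with $(6,6)$ nice and is \emph{not} simple. (You also assert without proof that $\A$-simplicity of $f$ is equivalent to ${}_{\Delta(G)}\mathscr K$-simplicity of $L$ under linear perturbations; this is in the spirit of Damon's theory but needs an argument.) The paper sidesteps all of this: in the primitive case it shows the $\A$-orbit of $f$ is open in its $\mathscr K$-orbit, so a modal stratum of codimension $n+1$ would force a $1$-parameter family of $\mathscr K$-orbits of codimension $n+2$, contradicting niceness of $(n+1,p+1)$; otherwise it applies Proposition \ref{iestrella} to the $1$-parameter stable unfolding of $f$ to obtain a codimension $1$ germ $f'$ in the same $\mathscr K$-orbit whose orbit would have to meet the modal stratum, contradicting Proposition \ref{cod1simple}.
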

\begin{proof}
Suppose we have a corank $1$ germ $f:(\mathbb K^n,0)\longrightarrow (\mathbb
K^p,0)$ with $\mathscr A_e$-codimension 2 which is not simple. Let
$X$ denote the $\mathscr A$-orbit of $f$. Then $\cod_{J^k(n,p)} (X)=\Acod(f)=n+2$ and the codimension of the modal stratum
$Y$ is $\cod_{J^k(n,p)} (Y)\leq \Acod(f)-1=\Aecod(f)+n-1=n+1$.

Suppose $f$ does not admit a 1-parameter stable unfolding, then $f$
is primitive. We know that
$$\dim\frac{\theta(f)}{tf(\theta_n)+wf(\theta_p)}=2$$ and therefore
$$\dim\frac{\theta(f)}{tf(\theta_n)+wf(\theta_p)+f^*(\mathscr M_p)\theta(f)}\leq
2.$$ Since $f$ does not admit a 1-parameter
stable unfolding this dimension cannot be $1$, therefore it must be 2. This implies that
$tf(\theta_p)+wf(\theta_n)+f^*(\mathscr M_p)\theta(f)\subset
tf(\theta_p)+wf(\theta_n)$ and by Theorem \ref{open} the $\mathscr A$-orbit is
open in the $\mathscr K$-orbit, so the codimension of the $\mathscr
K$-orbit is equal to $n+2$. Since the codimension of the modal
stratum is $n+1$ we have a 1-parameter family of $\mathscr
K$-orbits of codimension $n+2$. So in $(n+1,p+1)$ we have modality of $\mathscr
K$-orbits of codimension $n+2$, which contradicts that $(n+1,p+1)$ are nice dimensions.

Now suppose $f$ admits a 1-parameter stable unfolding $F:(\mathbb
K^n\times \mathbb K,0)\longrightarrow (\mathbb
K^p\times\mathbb K,0)$. By Proposition \ref{iestrella}, there exists $f'$
in the same $\mathscr K$-orbit such that $\Aecod(f')=1$.
So $\cod_{J^k(n,p)}
(X')=n+1$, where $X'$ denotes the $\mathscr A$-orbit of $f'$. As
the codimension of the $\mathscr A$-orbit of $f'$ is greater than or equal to the
codimension of the modal stratum, then $f'$ is not simple, which
contradicts Proposition \ref{cod1simple}.

\end{proof}

Notice that in the above proof the hypothesis of corank 1 is only used to ensure the existence of $\mathscr A_e$-codimension 1 hyperplane sections so the above result can be rephrased as

\begin{prop}\label{sectionimpliescod2simple}
Let $(n+1,p+1)$ be nice dimensions. If all stable germs $F:(\mathbb
K^{n+1},0)\longrightarrow (\mathbb K^{p+1},0)$ admit a codimension 1 hyperplane section, then all codimension 2 germs $f:(\mathbb
K^{n},0)\longrightarrow (\mathbb K^{p},0)$ are simple.
\end{prop}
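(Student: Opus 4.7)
The plan is to mirror the proof of Theorem \ref{cod2simple}, observing that the corank~1 hypothesis there was used only to invoke Proposition \ref{iestrella} and produce an $\mathscr A_e$-codimension~1 hyperplane section of the 1-parameter stable unfolding. Under the hypothesis of the present statement, such a section is provided directly for every stable $F:(\K^{n+1},0)\to(\K^{p+1},0)$, so the proof structure transports verbatim to arbitrary corank.

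Concretely, I would suppose for contradiction that $f:(\K^n,0)\to(\K^p,0)$ is an $\mathscr A_e$-codimension~2 germ that is not simple, and let $X$ denote its $\A$-orbit. Then $\cod_{J^k(n,p)}(X)=\Acod(f)=n+2$, while the modal stratum $Y$ satisfies $\cod_{J^k(n,p)}(Y)\leq n+1$. I then split into two cases. If $f$ is primitive, the dimension count from Theorem \ref{cod2simple} forces
$$\dim\frac{\theta(f)}{tf(\theta_n)+wf(\theta_p)+f^*\mathscr M_p\theta(f)}=2,$$
so by Theorem \ref{open} the $\A$-orbit is open in the $\mathscr K$-orbit. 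This produces modality among $\mathscr K$-orbits of codimension $n+2$, and passing to $J^k(n+1,p+1)$ via a trivial product unfolding preserves that modality, contradicting that $(n+1,p+1)$ lies in the nice dimensions.

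Otherwise $f$ admits a 1-parameter stable unfolding $F$, and the new hypothesis supplies a hyperplane section $f'$ of $F$ with $\Aecod(f')=1$. As in the proof of Theorem \ref{cod2simple}, such $f'$ lies in the $\mathscr K$-orbit of $f$, so the $\A$-orbit of $f'$ has codimension $n+1$; this forces $f'$ to meet the modal stratum and hence to be non-simple, contradicting Proposition \ref{cod1simple}. The only step that is not a direct transcription is the assertion that two transverse hyperplane sections of the same stable germ share a $\mathscr K$-class, which follows from the $\mathscr K_e$-versality of stable unfoldings via Damon's identification $\Aecod(f)={}_{\Delta(F)}\mathscr K_e$-$\cod(L)$; apart from this subtlety, the argument is essentially a relabelling of Theorem \ref{cod2simple}, and no genuinely new obstacle appears.
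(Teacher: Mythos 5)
Your proposal is correct and follows essentially the same route as the paper, which proves this proposition simply by observing that the corank~1 hypothesis in Theorem~\ref{cod2simple} was used only to invoke Proposition~\ref{iestrella} and produce the $\mathscr A_e$-codimension~1 hyperplane section of the 1-parameter stable unfolding. Your case split (primitive versus admitting a 1-parameter stable unfolding) and the contradictions drawn in each case are exactly those of Theorem~\ref{cod2simple}, with the section now supplied by hypothesis.
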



We believe that the results in this section  hold also for multigerms, but in this paper we are concerned only with monogerms.

\subsection{An example of a corank 2 codimension 2 germ which is not simple}\label{sectioncod2notsimple}

From Theorem \ref{cod2simple} in order to find $\mathscr A_e$-codimension 2 non simple germs we have two possibilities, either it has corank greater than 1, or it is has corank 1 and is just below the boundary of the nice dimensions. In the latter case it must come from a section of an $\mathscr A_e$-codimension 1 non simple germ in the boundary of the nice dimensions and must have $\mathscr A$-orbits open in the $\mathscr K$-orbits.

From Proposition \ref{equiv} it follows that a stable germ $F:(\mathbb
K^{n+1},S)\longrightarrow (\mathbb K^{p+1},0)$ admits an $\mathscr A_e$-codimension 1 hyperplane section if and only if there exists a linear map $L$ such that $\mathscr M_{p+1}\subset L(\Lift(F))+\langle L \rangle$.

Consider the stable germ $F_{3,3}:(\mathbb K^6,0)\to (\mathbb K^6,0)$ given by
$$F_{3,3}(x,y,u_1,u_2,u_3,u_4)=(x^3+y^3+u_1x+u_2y+u_3x^2+u_4y^2,xy,u_1,u_2,u_3,u_4)=(X,Y,U_1,...,U_4).$$

\begin{lem}\label{lift33} $\Lift(F_{3,3})$ is generated by

$$\eta_{1,2,3}=\left(\begin{array}{c}
3X\\
2Y\\
2U_1\\
2U_2\\
U_3\\
U_4\end{array}\right),
\left(\begin{array}{c}
2U_1U_2+6Y^2+4U_3U_4Y\\
X\\
-3U_2U_3-5U_4Y\\
-3U_1U_4-5U_3Y\\
-4U_2\\
-4U_1\end{array}\right),
\left(\begin{array}{c}
\frac{4}{3}U_2Y\\
-\frac{1}{9} U_3Y\\
X+\frac{1}{9}U_1U_3\\
-\frac{5}{3} U_4Y\\
-\frac{2}{3} U_1+\frac{2}{9} U_3^2\\
-2Y\end{array}\right)$$

$$\eta_{4,5,6}=\left(\begin{array}{c}
\frac{4}{3}U_1Y\\
-\frac{1}{9} U_4Y\\
-\frac{5}{3} U_3Y\\
X+\frac{1}{9}U_2U_4\\
-2Y\\
-\frac{2}{3} U_2+\frac{2}{9} U_4^2\\
\end{array}\right),
\left(\begin{array}{c}
\frac{5}{3}U_4Y^2+\frac{1}{9}U_2U_3Y\\
(-\frac{2}{9}U_1+\frac{2}{27}U_3^2)Y\\
-\frac{4}{3}U_2Y+\frac{2}{9}U_1^2-\frac{2}{27}U_1U_3^2\\
-2Y^2-\frac{2}{9}U_3U_4Y\\
X+\frac{5}{9}U_1U_3-\frac{4}{27}U_3^3\\
-\frac{1}{3}U_3Y \end{array}\right),
\left(\begin{array}{c}
\frac{5}{3}U_3Y^2+\frac{1}{9}U_1U_4Y\\
-\frac{2}{9}U_2Y+\frac{2}{27}U_4^2Y\\
-2Y^2-\frac{2}{9}U_3U_4Y\\
-\frac{4}{3}U_1Y+\frac{2}{9}U_2^2-\frac{2}{27}U_4^2U_2\\
-\frac{1}{3}U_4Y\\
X+\frac{5}{9}U_2U_4-\frac{4}{27}U_4^3\end{array}\right).$$
\end{lem}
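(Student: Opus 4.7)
The plan is to verify the lemma in two stages: (i) show that each listed $\eta_i$ lies in $\Lift(F_{3,3})$ by producing an explicit lift $\xi_i\in\theta_6$ with $tF_{3,3}(\xi_i)=wF_{3,3}(\eta_i)$, and (ii) show these six fields generate the whole module via Saito's criterion.

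For step (i), the field $\eta_1$ is the weighted-Euler field of a natural quasi-homogeneous structure on $F_{3,3}$: assigning source weights $(1,1,2,2,1,1)$ to $(x,y,u_1,u_2,u_3,u_4)$ makes the components of $F_{3,3}$ weighted homogeneous of weights $(3,2,2,2,1,1)$, so the source Euler field $\xi_1=x\partial_x+y\partial_y+2u_1\partial_{u_1}+2u_2\partial_{u_2}+u_3\partial_{u_3}+u_4\partial_{u_4}$ lifts $\eta_1$. For $\eta_2,\dots,\eta_6$, since the last four components of $F_{3,3}$ are the coordinates $u_1,\dots,u_4$, the equation $tF_{3,3}(\xi)=\eta\circ F_{3,3}$ determines $\xi_{u_i}$ directly from the last four entries of the right-hand side, and reduces to a $2\times 2$ linear system for $(\xi_x,\xi_y)$ over $\O_6$ with coefficient matrix
\[
A=\begin{pmatrix} 3x^2+u_1+2u_3x & u_2+3y^2+2u_4y \\ y & x \end{pmatrix}.
\]
The verification that the right-hand side lies in the image of $A$ for each remaining $\eta_i$ is a direct polynomial calculation.

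For step (ii), since $F_{3,3}$ is stable and equidimensional, the discriminant $\Delta(F_{3,3})$ is a free divisor (Looijenga) and $\Lift(F_{3,3})=\Derlog(\Delta(F_{3,3}))$ is a free $\O_6$-module of rank $6$. By Saito's criterion, the six $\eta_i$ form a free basis of $\Lift(F_{3,3})$ provided the determinant of the $6\times 6$ matrix with columns $\eta_1,\dots,\eta_6$ is a unit multiple of a reduced defining equation of $\Delta(F_{3,3})$. A defining equation can be produced by eliminating $x$ and $y$ from the ideal generated by $\det A$, $X-(x^3+y^3+u_1x+u_2y+u_3x^2+u_4y^2)$ and $Y-xy$ in $\O_6[x,y]$.

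The main obstacle is computational: both the $6\times 6$ determinant and the elimination producing the defining equation of $\Delta(F_{3,3})$ are sizeable polynomial manipulations, most efficiently handled by a computer algebra system such as \emph{Singular} or \emph{Macaulay2}. The weighted-homogeneity above (with target weights $(3,2,2,2,1,1)$) provides a sanity check, since both polynomials must be quasi-homogeneous of the same total weight, which can be computed a priori from the weighted degrees of the $\eta_i$ on one side and from the critical set $\det A=0$ on the other.
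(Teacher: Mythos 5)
Your proposal follows essentially the same route as the paper: the paper likewise notes that the discriminant of $F_{3,3}$ is a free divisor (hence $\Lift(F_{3,3})$ is a free module on $6$ generators), observes that the Euler field is liftable for the obvious quasi-homogeneous structure, and exhibits explicit lowerable fields $\xi_2,\dots,\xi_6$ lifting the remaining five generators. Your explicit appeal to Saito's criterion (determinant of the $6\times 6$ matrix equal to a reduced defining equation of the discriminant) to conclude that six independent liftable fields actually generate is in fact slightly more careful than the paper's bare assertion of linear independence, but the argument is the same in substance.
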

\begin{proof}
$F_{3,3}$ is a free divisor so $\Lift(F_{3,3})$ is generated by 6 vector fields. The Euler vector field is clearly liftable and the other 5 are all linearly independent and liftable by the following lowerable vector fields:
$$\xi_2=\left(\begin{array}{c}
u_2+y^2+u_4y\\
u_1+x^2+u_3x\\
-3u_2u_3-5u_4xy\\
-3u_1u_4-5u_3xy\\
-4u_2\\
-4u_1\end{array}\right),
\xi_3=\left(\begin{array}{c}
-\frac{1}{3}x^2-\frac{1}{9}u_3x\\
\frac{1}{3}xy\\
x^3+y^3+u_1x+u_2y+u_3x^2+u_4y^2+\frac{1}{9}u_1u_3\\
-\frac{5}{3}u_4xy\\
-\frac{2}{3}u_1+\frac{2}{9}u_3^2\\
-2xy\end{array}\right)$$

$$\xi_{4}=\left(\begin{array}{c}
\frac{1}{3}xy\\
-\frac{1}{3}y^2-\frac{1}{9}u_4y\\
-\frac{5}{3}u_3xy\\
x^3+y^3+u_1x+u_2y+u_3x^2+u_4y^2+\frac{1}{9}u_2u_4\\
-2xy\\
-\frac{2}{3}u_2+\frac{2}{9}u_4^2\end{array}\right),$$

$$\xi_{5}=\left(\begin{array}{c}
-\frac{1}{3}x^3-\frac{1}{9}u_3x^2+(-\frac{2}{9}u_1+\frac{2}{27}u_3^2)x\\
\frac{1}{3}x^2y+\frac{1}{9}u_3xy\\
-\frac{4}{3}u_2xy+\frac{2}{9}u_1^2-\frac{2}{27}u_1u_3^2\\
-2x^2y^2-\frac{2}{9}u_3u_4xy\\
x^3\!+\!y^3\!+\!u_1x\!+\!u_2y\!+\!u_3x^2\!+\!u_4y^2+\frac{5}{9}u_1u_3-\frac{4}{27}u_3^3\\
-\frac{1}{3}u_3xy\end{array}\right)$$

$$\xi_6=\left(\begin{array}{c}
\frac{1}{3}xy^2+\frac{1}{9}u_4xy\\
-\frac{1}{3}y^3-\frac{2}{9}u_2y-\frac{1}{9}u_4y^2+\frac{2}{27}u_4^2y\\
-2x^2y^2-\frac{2}{9}u_3u_4xy\\
-\frac{4}{3}u_1xy+\frac{2}{9}u_2^2-\frac{2}{27}u_4^2u_2\\
-\frac{1}{3}u_4xy\\
x^3\!+\!y^3\!+\!u_1x\!+\!u_2y\!+\!u_3x^2\!+\!u_4y^2+\frac{5}{9}u_2u_4-\frac{4}{27}u_4^3\end{array}\right)$$
\end{proof}

Therefore, there does not exist $L$ such that $\mathscr M_{6}\subset L(\Lift(F_{3,3}))+\langle L \rangle$ and so
$F_{3,3}$ does not admit a codimension 1 hyperplane section.

\begin{teo}\label{cod2notsimple}
The corank 2 germ $f:(\mathbb R^5,0)\to (\mathbb R^5,0)$ given by
$$f(x,y,u_1,u_2,u_4)=(x^3+y^3+u_1x+u_2y+(-\lambda u_4-u_4^2)x^2+u_4y^2,xy,u_1,u_2,u_4),$$ with $\lambda\neq 0,-1$, has $\mathscr A_e$-codimension 2 and is not simple.
\end{teo}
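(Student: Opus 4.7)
The plan is in two steps: verify $\Aecod(f)=2$ using Damon's formula and Lemma \ref{lift33}, and then produce a continuous modulus to conclude non-simplicity.

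To compute the codimension, I first identify $f$ as the pull-back of the stable germ $F_{3,3}$ by the immersion $i(X,Y,U_1,U_2,U_4)=(X,Y,U_1,U_2,-\lambda U_4-U_4^2,U_4)$, whose image is the zero set of $L(X,Y,U_1,U_2,U_3,U_4)=U_3+\lambda U_4+U_4^2$. By Damon's formula,
$$\Aecod(f)=\dim_{\R}\frac{\O_6}{\langle L\rangle+tL(\Lift(F_{3,3}))}.$$
Since $tL(\eta)=\eta_5+(\lambda+2U_4)\eta_6$ (with components in the order $X,Y,U_1,U_2,U_3,U_4$), I would apply $tL$ to the six explicit generators $\eta_1,\ldots,\eta_6$ of $\Lift(F_{3,3})$ in Lemma \ref{lift33}, reduce modulo $L$, and verify that the resulting ideal has a two-dimensional complement in $\O_6$. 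The lower bound $\geq 2$ is automatic from the remark after Lemma \ref{lift33} that $F_{3,3}$ admits no codimension $1$ hyperplane section; the upper bound is the content of the calculation.

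For non-simplicity, the plan is to show that $\lambda$ is a continuous $\A$-modulus of $f_\lambda$. Since every unfolding of $f_{\lambda_0}$ contains the family $\{f_\lambda\}$ for $\lambda$ near $\lambda_0$, it suffices to prove $f_\lambda\not\sim_{\A} f_\mu$ for distinct $\lambda,\mu$ close to any fixed $\lambda_0\in\R\setminus\{0,-1\}$. Two hyperplane sections of $F_{3,3}$ are $\A$-equivalent precisely when their defining functions are $\mathscr K_{\Delta(F_{3,3})}$-equivalent, so the problem reduces to comparing $L_\lambda$ and $L_\mu$ under the action of the contact group preserving $\Delta(F_{3,3})$, whose infinitesimal version at $L$ is given exactly by the module $tL(\Lift(F_{3,3}))+\langle L\rangle$ that already appeared in the codimension calculation.

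The main obstacle is to produce an explicit $\mathscr K_{\Delta(F_{3,3})}$-invariant of $L_\lambda$ that depends non-trivially on $\lambda$. I would look for such an invariant inside the maximal reductive subgroup of self-equivalences of $F_{3,3}$: the weighted-homogeneous dilations that preserve $F_{3,3}$ together with the discrete involution $(x,y,u_3,u_4)\leftrightarrow(y,x,u_4,u_3)$. After normalising the quadratic part of $L_\lambda$ to $U_4^2$, the residual freedom rescales the coefficient of $U_4$ only in a discrete way, so this coefficient --- essentially $\lambda$ itself --- is an orbit invariant. The exceptional values $\lambda\in\{0,-1\}$ arise where this normalisation degenerates and $L_\lambda$ acquires extra symmetries collapsing distinct nearby orbits; for all other $\lambda$, the invariant separates $\A$-orbits, producing a continuous $1$-parameter family of pairwise inequivalent germs accumulating at $f_{\lambda_0}$, and so $f$ is non-simple.
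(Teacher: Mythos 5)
Your overall strategy coincides with the paper's: both reduce the problem via Damon's theorem to the $\mathscr K_{\Delta(F_{3,3})}$-classification of the defining equation $L_\lambda=U_3+\lambda U_4+U_4^2$, using the explicit generators of $\Lift(F_{3,3})$ from Lemma \ref{lift33}. Your codimension plan (compute $\dim_\K\,\O_6/\bigl(tL(\Lift(F_{3,3}))+\langle L\rangle\bigr)$ directly, with the lower bound coming from the non-existence of a codimension~1 section, which by the remark in Section \ref{section-notation} is detected on 1-jets) is sound and is essentially what the paper does, packaged instead as a complete-transversal computation.

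The gap is in the non-simplicity step. You propose to read off the invariance of $\lambda$ from the \emph{maximal reductive subgroup} of self-equivalences of $F_{3,3}$ (weighted dilations plus the involution $x\leftrightarrow y$). But an invariant of a subgroup action need not be an invariant of the full group $_V\mathscr K$: the group of diffeomorphisms preserving $\Delta(F_{3,3})$ acts on jets of $L$ through the integrated flows of \emph{all} the liftable vector fields, including the five non-Euler generators, whose linear parts generate a unipotent factor. A priori one of these could shear the coefficient of $U_3$ into that of $U_4$ (or vice versa) and destroy the modulus -- your sentence ``the residual freedom rescales the coefficient of $U_4$ only in a discrete way'' is exactly the assertion that needs proof, and nothing in your argument supplies it. The paper closes this by integrating the linear parts of $\eta_1,\dots,\eta_6$ explicitly, observing that $\eta_2,\dots,\eta_6$ never feed $X,Y,U_1,U_2$ into the $U_3,U_4$-coefficients while $\eta_1$ scales both simultaneously, so that $j^1L$ normalises to $U_3+\lambda U_4$ with $\lambda$ already a modulus at the 1-jet level; the conditions $\lambda\neq -1$ (for $U_4^2$ to be a complete 2-transversal) and $\lambda\neq 0$ (for the codimension to equal 2) then come out of the 2-jet computation. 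To repair your argument you should either carry out this integration, or equivalently verify from Lemma \ref{lift33} that $U_4\notin tL_\lambda(\Lift(F_{3,3}))+\langle L_\lambda\rangle + \mathscr M_6^2\theta(L_\lambda)$, which exhibits $\partial L_\lambda/\partial\lambda$ as a genuinely transverse direction to the orbit and yields the one-parameter family of pairwise inequivalent germs.
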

\begin{proof}
The idea is to use Damon's Theorem relating $\mathscr A_e$-codimension and $_V{\mathscr K}_e$-codimension where $V$ is the discriminant of $F_{3,3}.$ By integrating the linear parts of the vector fields in $\Lift(F_{3,3})$ we obtain the linear parts of diffeomorphisms in $_V\mathscr K$, which are:
\begin{align*}
&\eta_1=(e^{3\alpha}X,e^{2\alpha}Y,e^{2\alpha}U_1,e^{2\alpha}U_2,e^{\alpha}U_3,e^{\alpha}U_4), \\
&\eta_2=(X,Y+\alpha X,U_1,U_2,U_3-4\alpha U_2,U_4-4\alpha U_1), \\
&\eta_3=(X,Y,U_1+\alpha X,U_2,U_3,U_4-2\alpha Y), \\
&\eta_4=(X,Y,U_1;U_2+\alpha X,U_3-2\alpha Y,U_4-2/3\alpha U_2),\\
&\eta_5=(X,Y,U_1,U_2,U_3+\alpha X,U_4),\\
&\eta_6=(X,Y,U_1,U_2,U_3,U_4+\alpha X).
\end{align*}

Let $L:(\mathbb R^6,0)\to \mathbb R$ and suppose $j^1L(X,Y,U_1,U_2,U_3,U_4)=aX+bY+cU_1+dU_2+eU_3+fU_4.$ If $f\neq 0$, by using $\eta_2,\ldots,\eta_6$ we can fix $a=b=c=d=0$, and by using $\eta_1$ we get $j^1L=U_3+\lambda U_4$ where $\lambda$ is a modulus. A complete 2-transversal is given by $U_4^2$ when $\lambda\neq -1$  since $\mathscr M_6^2\subset  T_V\mathscr K_1 L+sp\{U_4^2\}+\mathscr M_6^3$ where $T_V\mathscr K_1L=tL(\Lift_1(F_{3,3}))+L^*\mathscr M_1\mathscr{O}_6$ and $\Lift_1(F_{3,3})$ is the space of vector fields in $\Lift(F_{3,3})$ with zero 1-jet (see \cite{brucewest}). Rescaling we set $L=U_3+\lambda U_4+U_4^2$. If $\lambda\neq 0$, this germ has $_V{\mathscr K}_e$-codimension 2 and is not simple, so the $\mathscr A_e$-codimension of $f$ is 2 and it is not simple.
\end{proof}

By Proposition \ref{equiv}, this is an example of a $\mathscr K$-orbit
which does not admit an open $\A$-orbit.

\section{The extra-nice dimensions}


Mather gave a stratification of the set $J^k(n,p)$ of $k$-jets of smooth mappings by $\mathscr K$-orbits. This induces a partition of $J^k(N,P)$ by $\mathscr K$-orbit bundles. Mather characterized stability in terms of transversality of the $k$-jet extension $j^kf:N\rightarrow J^k(N,P)$ to this stratification.
He showed that there exists a smallest Zariski closed $\mathscr K^k$-invariant set $\Pi^k(n,p)$ in $J^k(n,p)$
such that its complement  in $J^k(n,p)$ is the union of finitely many $\mathscr K^k$-orbits. The codimension of $\Pi^k(n,p)$ decreases as $k$ increases. Moreover, there exists a big enough $k$ for which the codimension of $\Pi^k(n,p)$ attains its minimum. For this $k$ the codimension of the bad set $\Pi(n,p)$ is denoted by $\sigma(n,p.)$  When $ \sigma(n,p) > n,$ then the $k$-jet of a generic map does not meet the set $\Pi(n,p)$ and therefore it is transversal to Mather's stratification in $J^k(N,P)$ and hence it is stable.
He defined the nice dimensions as the pairs $(n,p)$ such that $\sigma(n,p)>n.$ See \cite{duplessiswall} for the notion of semi-nice dimensions, where 2-modality of $\mathscr K$-orbits appears.

As a consequence of Proposition~\ref{cod1simple}, the fact that Mather's bad set has codimension greater than $n$ means that one can detect lack of simplicity at the $\mathscr A_e$-codimension 1 level. Take also into account that $\mathscr K$-orbits of codimension less than or equal to $n$ have stable representatives of $\mathscr A$-codimension less than or equal to $n$ (i.e. there is an open $\A$-orbit in the $\mathscr K$-orbit). If we want to refine this definition to detect lack of simplicity in the $\mathscr A_e$-codimension 2 level we must consider bad sets of codimension greater than $n+1$. Furthermore, since $\mathscr A$-orbits of non stable germs may or may not be open in their $\mathscr K$-orbit, we must consider stratification by $\A$-orbits instead of $\mathscr K$-orbits. This leads to the following

\begin{definition}\label{extranice}
The pair $(n,p)$ is said to be in the extra-nice dimensions if, for large enough $l$, there is a Zariski closed $\A$-invariant subset $\Lambda$ of $J^l(n,p)$, of codimension greater than $n+1$, whose complement is a finite union of $\mathscr A$-orbits.
\end{definition}

It follows from the definition that

\begin{prop}\label{extranicethen2}
If $(n,p)$ is in the extra-nice dimensions then all $\mathscr A_e$-codimension 2 germs are simple.
\end{prop}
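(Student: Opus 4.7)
The plan is to argue by contradiction using codimension counting in the jet space, following the template of the proof of Theorem \ref{cod2simple}. Suppose $(n,p)$ is in the extra-nice dimensions and that there exists a non-simple germ $f:(\K^n,0)\to(\K^p,0)$ with $\Aecod(f)=2$. From Wilson's relation recalled in Section \ref{section-notation}, $\Acod(f)=\Aecod(f)+n=n+2$, so for $k$ sufficiently large the $\A^k$-orbit $X=\A^k\cdot j^kf$ has codimension $n+2$ in $J^k(n,p)$.

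Non-simplicity of $f$ should yield a modal stratum $Y\subset J^k(n,p)$ through $j^kf$, i.e.\ a positive-dimensional continuous family of distinct $\A^k$-orbits accumulating on $X$. Since each such orbit has codimension at most $n+2$ and the modal parameter adds at least one dimension to the union, $\cod_{J^k(n,p)}(Y)\leq n+1$. By the extra-nice hypothesis, $\Lambda$ is a Zariski closed set of codimension strictly greater than $n+1$; hence $Y\not\subseteq \Lambda$, and $Y\setminus \Lambda$ is a non-empty Zariski open subset of $Y$. Being open in the foliated stratum $Y$, it contains an open slice of the one-parameter family of orbits, and therefore infinitely many distinct $\A^k$-orbits. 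But $J^k(n,p)\setminus \Lambda$ consists, by definition of the extra-nice dimensions, of only finitely many $\A^k$-orbits, which is the desired contradiction.

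The main obstacle I anticipate is making the modal stratum step rigorous: one needs to argue, from the non-simplicity of $f$ together with $\Acod(f)=n+2$, that $j^kf$ genuinely lies in the closure of a positive-dimensional family of $\A^k$-orbits whose codimensions are bounded by $n+2$. This rests on Mather's finite determinacy together with the standard stratification of $J^k(n,p)$ by $\A^k$-orbits, and is the same ingredient used implicitly in the proof of Theorem \ref{cod2simple}; once it is granted, the remaining codimension comparison is immediate. Note that the corank hypothesis disappears here, because we do not need to pass to an $\Aecod=1$ hyperplane section of a one-parameter stable unfolding as in Theorem \ref{cod2simple}; the definition of extra-nice dimensions directly supplies the required codimension bound on the bad set at the $\A$-orbit level.
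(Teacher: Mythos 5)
Your argument is correct and is essentially the paper's own proof: the paper likewise deduces from non-simplicity a one-parameter family of $\mathscr A$-orbits of codimension $n+2$ (hence a stratum of codimension at most $n+1$), which is incompatible with the existence of a bad set $\Lambda$ of codimension greater than $n+1$ whose complement is a finite union of $\mathscr A$-orbits. Your extra detail on why the modal stratum cannot be absorbed into $\Lambda$ simply makes explicit what the paper leaves implicit.
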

\begin{proof}
Suppose we have an $\mathscr A_e$-codimension 2 non simple germ. Then there is a 1-parameter family of $\mathscr A$-orbits of codimension $n+2$, so we cannot find and $\A$-invariant subset $\Lambda$ of $J^l(n,p)$, of codimension greater than $n+1$, whose complement is a finite union of $\mathscr A$-orbits.
\end{proof}

The converse of Proposition~\ref{extranicethen2} is not true as we shall see in a further example (Proposition \ref{nocod2}).

The subset $\Lambda$ in the definition can be constructed containing all non-simple $\mathscr A$-orbits and
all $\mathscr A$-orbits of codimension greater than or equal to $n+2$. In order to be in the extra-nice dimensions, the codimension of this set must be greater than or equal to $n+2$ (see Section \ref{dense}, Proposition \ref{lambda}). It
is not contained and it does not contain Mather's bad set, because
in a $\mathscr K$-orbit of codimension less than or equal to $n$
there can be an infinite number of simple $\mathscr A$-orbits (for example,
augmentations), and in order to have finite $\mathscr A$-orbits in
the complement of our bad set $\Lambda$ we must include in $\Lambda$ some of these $\mathscr
A$-orbits  (i.e. we include the ones of codimension
greater than or equal to $n+2$). In Remark \ref{mathersours} we
compare Mather's bad set and ours for some examples.

The previous definition is well defined because there is an estimate  (depending on $n$ and $p$) for  the degree of
determinacy of $\mathscr A_e$-codimension 1 germs. Namely, results by Mather and Gaffney which can be found in \cite{Wall} and \cite{nunomond} state that if $\A$-cod$(f)=d$ then $\mathscr M_n^{(rp+d)^2}\theta(f)\subset T\mathscr A f$, where r is
the number of branches, and if $\mathscr M_n^{k+1}\theta(f)\subset T\mathscr A f$ then f is $(2k+1)$-$\A$-determined. Combining this we have that $f$ is $(2((rp+d)^2-1)+1)$-$\A$-determined.
So for monogerms we obtain that if $\A_e$-cod$(f)=i$, then $f$ is $(2(p+n+i)
^2-1)$-$\A$-determined, in particular any $\mathscr A_e$-codimension 1 (and therefore $\A$-codimension $n+1$) germ $f:(\R^n,0)\rightarrow (\R^p,0)$ is $(2(p+n+1)^2-1)$-$\mathscr A$-determined.

\begin{prop}\label{extraimplicanice}
If the pair $(n,p)$ is in the extra-nice dimensions, then $(n+1,p+1)$ is in the nice dimensions (in particular, $(n,p)$ is nice dimensions too).
\end{prop}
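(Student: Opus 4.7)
My plan is to argue by contrapositive. Suppose $(n+1,p+1)$ is not in the nice dimensions; I will derive that $(n,p)$ is not in the extra-nice dimensions. Failure of niceness means that, for all sufficiently large $k$, Mather's $\mathscr{K}$-bad set in $J^k(n+1,p+1)$ has codimension at most $n+1$, so there exists a one-parameter modal $\mathscr{K}$-stratum consisting of pairwise $\mathscr{K}$-inequivalent germs $\{G_\mu\}$, $G_\mu:(\K^{n+1},0)\to(\K^{p+1},0)$, whose $\mathscr{K}$-orbits all have codimension at most $n+1$. The goal is to translate this modality into a modal family of $\A$-orbits of codimension at most $n+1$ in $J^l(n,p)$, contradicting extra-niceness.

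I would split into cases according to whether $\Aecod(G_\mu)=0$ or not. In the stable case, the $\mathscr{K}$- and $\A$-orbits coincide, so the $G_\mu$ are also pairwise $\A$-inequivalent. By Mather's classification, every stable germ in $(n+1,p+1)$ is $\A$-equivalent either to a trivial unfolding of a stable germ in $(n,p)$, or to a one-parameter versal unfolding of some $g_\mu:(\K^n,0)\to(\K^p,0)$ with $\Aecod(g_\mu)\leq 1$ --- the $\Aecod\leq 1$ hyperplane section being provided by Proposition~\ref{iestrella} in the corank 1 sub-case. Since versal unfoldings of $\A$-equivalent germs are themselves $\A$-equivalent (and conversely for the minimal stable model), the $g_\mu$ are pairwise $\A$-inequivalent, yielding a modal family of $\A$-orbits of codimension at most $n+1$ in $J^l(n,p)$.

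In the non-stable case, $\Aecod(G_\mu)\geq 1$ with $\Kcod(G_\mu)\leq n+1$. I would first embed each $G_\mu$ as a hyperplane section of a stable unfolding $\tilde G_\mu$ in $(n+1+s,p+1+s)$ (which exists since $G_\mu$ is of finite singularity type) and then take a further generic hyperplane section to descend to a germ $g_\mu:(\K^n,0)\to(\K^p,0)$. Damon's identity $\Aecod(g)={}_{\Delta(\tilde G_\mu)}\mathscr{K}_e\text{-cod}(L)$ from Section~\ref{section-notation} controls the $\A_e$-codimension of the descended $g_\mu$, and the modality of $\{G_\mu\}$ transfers to a modal family of $\A$-orbits of codimension $\leq n+1$ in $J^l(n,p)$.

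The main obstacle is justifying the existence of the required $\Aecod\leq 1$ hyperplane section in the corank $\geq 2$ sub-case of the stable case, which goes beyond Proposition~\ref{iestrella}. A clean resolution uses the $\O_{p+1+s}$-module structure of $\Lift(\tilde G_\mu)$ together with a dimension count on linear forms $L$ satisfying ${}_{\Delta(\tilde G_\mu)}\mathscr{K}_e\text{-cod}(L)\leq 1$; this is exactly the hyperplane-section characterization of extra-nice dimensions treated later in the paper. Finally, the ``in particular'' statement --- that $(n,p)$ itself is in the nice dimensions --- follows from the standard monotonicity of the nice dimensions: nice $(n+1,p+1)$ implies nice $(n,p)$.
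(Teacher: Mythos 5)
Your overall strategy (contrapositive) is logically legitimate, but it forces you into the \emph{hard} direction --- manufacturing modality of $\A$-orbits in $J^l(n,p)$ out of modality of $\mathscr K$-orbits in $J^k(n+1,p+1)$ --- and the key steps of that descent are not closed. Concretely: (i) failure of niceness of $(n+1,p+1)$ only gives a modal stratum whose \emph{union} has codimension $\leq n+1$; the individual $\mathscr K$-orbits $\{G_\mu\}$ in a one-parameter family then have codimension up to $n+2$, not $\leq n+1$ as you assert. In the typical boundary situation (e.g.\ $(9,9)$, where the modal orbits have codimension $10$ and their generic representatives have $\Aecod=1$) the modal orbits have \emph{no} stable representative, so your ``stable case'' is miscalibrated and the real work all lands in the case you treat most sketchily. (ii) Your stable-case claim that every stable germ in $(n+1,p+1)$ is either a trivial suspension or a versal unfolding of some $g_\mu$ with $\Aecod(g_\mu)\leq 1$ is false in general: Proposition \ref{iestrella} covers only corank $1$, and the paper's germ $F_{3,3}$ in $(6,6)$ is a stable germ all of whose hyperplane sections have $\Aecod\geq 2$. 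Your proposed fix appeals to the hyperplane-section characterization of the extra-nice dimensions (Theorem \ref{sectionnice}), but that theorem is proved \emph{using} the present proposition (via Proposition \ref{n+1ton}), so the argument is circular as written. (iii) In the non-stable case, a generic hyperplane section of (an unfolding of) a germ with $\Aecod(G_\mu)\geq 1$ has $\Aecod(g_\mu)\geq 2$, hence $\Acod(g_\mu)\geq n+2$; you give no argument that the resulting family of $\A$-orbits is pairwise inequivalent, nor that its union has codimension $\leq n+1$, which is what is needed to contradict extra-niceness.

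The paper proves the statement directly in a few lines, and you should compare: start from the extra-nice bad set $\Lambda\subset J^l(n,p)$ of codimension $>n+1$ with finite complement of $\A$-orbits; replace each such $\A$-orbit by the $\mathscr K$-orbit containing it (finitely many, each of codimension $\leq n+1$ since a $\mathscr K$-orbit has codimension at most that of any $\A$-orbit it contains); observe that the complement of this finite union of $\mathscr K$-orbits is contained in $\Lambda$, hence is Zariski closed, $\mathscr K$-invariant and of codimension $>n+1$; finally use the invariance of $\mathscr K$-orbit codimension under one-parameter unfolding to transport this picture to $J^l(n+1,p+1)$, which exhibits $(n+1,p+1)$ as nice. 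No classification of stable germs, no hyperplane sections, and no modality transfer are needed. If you want to salvage your write-up, the cleanest repair is simply to contrapose this direct argument rather than attempt the descent.
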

\begin{proof}
By definition there exists an $\A$-invariant subset $\Lambda$ of
$J^l(n,p)$, of codimension greater than $n+1$, whose complement is a
finite union of $\mathscr A$-orbits. For each one of those $\mathscr
A$-orbits, consider the $\mathscr K$-orbit which contains it. We
therefore have a finite number of $\mathscr K$-orbits of codimension less than or equal to $n+1$ which may
include some $\mathscr A$-orbits which were originally in $\Lambda$.
The complement of this finite number of $\mathscr K$-orbits is
included in $\Lambda$, and therefore the codimension of this
complement is greater than $n+1$. This complement contains all the $\mathscr K$-orbits of codimension greater than or equal to $n+2$ and is Zariski closed. In
conclusion, there exists a $\mathscr K$-invariant subset $\Lambda'$
of codimension greater than $n+1$ such that its complement is a finite
number of $\mathscr K$-orbits. The codimensions of these strata are the same in $(n+1,p+1)$ and so $(n+1,p+1)$ is in the nice
dimensions.
\end{proof}

This means that if $(n+1,p+1)$ is not in the nice dimensions, then $(n,p)$ is not in the extra-nice dimensions.

It is obvious from the definition of nice dimensions that if a pair $(n+1,p+1)$ is in the nice dimensions, then $(n,p)$ is in the nice dimensions, since the codimension of $\mathscr K$-orbits is invariant under unfoldings. However, this is not so obvious for the extra-nice dimensions:

\begin{prop}\label{n+1ton}
If the pair $(n+1,p+1)$ is in the extra-nice dimensions, then $(n,p)$ is in the extra-nice dimensions.
\end{prop}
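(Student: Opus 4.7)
The plan is to pull back the bad set from $(n+1,p+1)$ to $(n,p)$ via the trivial unfolding map. Choose $l$ large enough so that the extra-niceness of $(n+1,p+1)$ supplies a Zariski closed $\mathscr A$-invariant subset $\Lambda'\subset J^l(n+1,p+1)$ of codimension $>n+2$ whose complement is a finite union of $\mathscr A$-orbits $O_1',\ldots,O_s'$, and consider the polynomial embedding
\[
\phi:J^l(n,p)\longrightarrow J^l(n+1,p+1),\qquad j^l f\longmapsto j^l(f\times \mathrm{id}_{\K}),
\]
where $f\times\mathrm{id}_{\K}$ denotes the trivial $1$-parameter unfolding $(x,t)\mapsto(f(x),t)$. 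Set $\Lambda:=\phi^{-1}(\Lambda')$, and check that it satisfies Definition \ref{extranice} for $(n,p)$.

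First I would verify the two easy conditions. Since $\phi$ is a regular (polynomial) map, $\Lambda$ is Zariski closed. Moreover $\phi$ is $\mathscr A$-equivariant at the jet level: if $f\sim_{\mathscr A} g$ via germs of diffeomorphisms $(\varphi,\psi)$, then their trivial unfoldings are related by $(\varphi\times\mathrm{id},\psi\times\mathrm{id})$, so $\mathscr A$-orbits are sent into $\mathscr A$-orbits. This gives $\mathscr A$-invariance of $\Lambda$.

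For the codimension bound the key ingredient is the identity $\Acod(f\times\mathrm{id}_{\K})=\Acod(f)+1$, which follows from the fact that trivial unfoldings preserve $\mathscr A_e$-codimension together with Wilson's formula $\Acod=\Aecod+n$ for monogerms. For any $j^lf\in\Lambda$ the $\mathscr A$-orbit of $j^l\tilde f$ is contained in $\Lambda'$ by invariance, hence has codimension $\geq n+3$ in $J^l(n+1,p+1)$; this gives $\Acod(\tilde f)\geq n+3$ and therefore $\Acod(f)\geq n+2$. So every $\mathscr A$-orbit sitting inside $\Lambda$ has codimension $>n+1$ in $J^l(n,p)$. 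Because $\Lambda$ is Noetherian, it has finitely many irreducible components, each $\mathscr A$-invariant; the generic $\mathscr A$-orbit of an irreducible invariant variety has the variety's dimension, so every component also has codimension $>n+1$, whence $\cod(\Lambda)>n+1$.

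The main obstacle is to show that the complement $J^l(n,p)\setminus\Lambda=\bigcup_{i=1}^{s}\phi^{-1}(O_i')$ is a finite union of $\mathscr A$-orbits, since a priori many distinct $\mathscr A$-orbits in $(n,p)$ could have $\mathscr A$-equivalent trivial unfoldings. I would handle this by exploiting the same identity $\Acod(f)=\Acod(\tilde f)-1$: on each $\phi^{-1}(O_i')$ the value of $\Acod(\tilde f)$ is constant (since $O_i'$ is one $\mathscr A$-orbit), hence so is $\Acod(f)$, which means every $\mathscr A$-orbit contained in $\phi^{-1}(O_i')$ has the same dimension. Now $\phi^{-1}(O_i')$ is a locally closed algebraic subset of $J^l(n,p)$, so it has finitely many irreducible components; its dimension coincides with the common orbit dimension, so each component is the closure (restricted to $\phi^{-1}(O_i')$) of a single $\mathscr A$-orbit. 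Thus each $\phi^{-1}(O_i')$ is a finite union of $\mathscr A$-orbits, and summing over $i=1,\ldots,s$ gives the required finite decomposition of the complement of $\Lambda$.
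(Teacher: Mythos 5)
Your overall strategy (pull the bad set of $(n+1,p+1)$ back along the trivial-unfolding embedding $\phi$) is not the paper's route, and as written it contains two genuine gaps, both of which assume away exactly the modality phenomena that the proposition is about. First, your codimension bound for $\Lambda=\phi^{-1}(\Lambda')$ rests on the assertion that ``the generic $\mathscr A$-orbit of an irreducible invariant variety has the variety's dimension.'' That is false in general: the closure of a $1$-parameter family of pairwise non-equivalent orbits of codimension $n+2$ is an $\A$-invariant algebraic set of codimension $n+1$ with no dense orbit. So from ``every orbit contained in $\Lambda$ has codimension $\geq n+2$'' you cannot conclude $\cod\Lambda\geq n+2$; the gap between these two statements is precisely the difference between being and not being in the extra-nice dimensions. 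Second, and for the same reason, the finiteness of $\A$-orbits in $\phi^{-1}(O_i')$ does not follow from the constancy of $\Acod$ there: a locally closed invariant set all of whose orbits have the same dimension can still be an infinite union of orbits unless its own dimension equals the common orbit dimension, and you assert that equality without proof. Establishing it amounts to showing that $f\times\mathrm{id}_{\K}\sim_{\A}g\times\mathrm{id}_{\K}$ forces $f$ and $g$ to lie in finitely many $\A$-classes --- a nontrivial ``cancellation of suspensions'' statement that is the real content of the step, not a formal consequence of irreducibility.

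The paper avoids both problems by arguing contrapositively. Assuming $(n,p)$ is not extra-nice, it takes $\Gamma$ to be the union of all $\A$-orbits of codimension $\geq n+2$, shows $\cod\Gamma\leq n+1$ (otherwise the finitely many stable and $\Aecod=1$ orbits in the nice dimensions would witness extra-niceness), and hence finds a Zariski open subset of $\Gamma$ foliated by infinitely many $\A$-orbits of equal codimension $>n+1$. It then transports this modality up one dimension: if the modal family lies in a single $\mathscr K$-orbit of codimension $\leq n+1$, there is a common $1$-parameter stable unfolding $F$, and augmenting by $F$ and a Morse function preserves $\Aecod$ and produces a stratum of codimension $\leq n+1$ of non-simple germs in $(n+1,p+1)$; if not, one gets modality of $\mathscr K$-orbits, contradicting that $(n+1,p+1)$ is nice (which follows from Proposition \ref{extraimplicanice}). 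If you want to salvage a direct argument, you would need to replace your unsupported dimension equalities with an explicit mechanism, such as augmentation, that controls both the non-equivalence of the lifted germs and the codimension of the set they sweep out.
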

\begin{proof}
Suppose that $(n,p)$ is not in the extra-nice dimensions. Then,
for any Zariski closed $\mathscr A$-invariant subset $\Gamma$ of $J^l(n,p)$ of
codimension greater than $n+1$, its complement is an infinite number
of $\mathscr A$-orbits.

Let $\Gamma$ be the union of all the $\mathscr A$-orbits of
codimension greater than or equal to $n+2$. If the codimension of
$\Gamma$ is greater than or equal to $n+2$, since it is an $\mathscr
A$-invariant set, its complement is an infinite number of $\mathscr
A$-orbits. On the other hand, the complement of $\Gamma$ is the
union of all $\mathscr A$-orbits of codimension less than or equal
to $n+1$. In the nice dimensions there are a finite number of stable
and $\mathscr A_e$-codimension 1 orbits, therefore we have a
contradiction. Hence, the codimension of $\Gamma$ must be less than or equal to
$n+1$. This means that there is a Zarisky open set in $\Gamma$ foliated by an infinite number of $\mathscr A$-orbits of the same codimension (greater than $n+1$, i.e. $\A_e$-codimension greater than 1).

If these infinite $\mathscr A$-orbits are contained in a single $\mathscr K$-orbit $X$ then $\cod_{J^k(n,p)}X\leq \cod_{J^k(n,p)} \Gamma\leq n+1$. Therefore the $\mathscr K$-orbit has a stable germ $F:(\mathbb K^{n+1},0)\rightarrow (\mathbb K^{p+1},0)$ which is a 1-parameter stable unfolding of the germs of the $\mathscr A$-orbits. By augmenting these germs by $F$ and a Morse function we obtain a stratum of codimension less than or equal to $n+1$ of germs of $\A_e$-codimension greater than 1 in $(n+1,p+1)$, and so $(n+1,p+1)$ is not extra-nice dimensions.

If no single $\mathscr K$-orbit contains the infinite $\A$-orbits, there is a stratum of codimension equal to the codimension of $\Gamma$ with modality of $\mathscr K$-orbits. This implies that $(n+1,p+1)$ cannot be nice dimensions. Therefore, by Proposition \ref{extraimplicanice}, $(n+1,p+1)$ is not extra-nice dimensions.
\end{proof}

\begin{rem}\label{codnonsimple}
The above proof implies that if a pair $(n,p)$ is in the nice
dimensions but not in the extra-nice dimensions, then there exists a
$\mathscr K$-orbit which contains a stratum of codimension less than or equal to $n+1$ of non-simple germs. In fact, the codimension is equal to $n+1$ (see Section \ref{wall}).
\end{rem}

\begin{teo}\label{sectionnice}
If the pair $(n,p)$ is in the extra-nice dimensions then every stable germ $F:(\mathbb K^{n+1},0)\rightarrow(\mathbb K^{p+1},0)$ admits a hyperplane $\mathscr A_e$-codimension 1 section $f:(\mathbb K^n,0)\rightarrow(\mathbb K^p,0).$ The converse is true if $(n+1,p+1)$ is in the nice dimensions.
\end{teo}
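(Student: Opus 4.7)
The plan is to prove both directions by combining Damon's identity from Section~\ref{section-notation}, the characterisation of Proposition~\ref{equiv}, and the stratification argument of Proposition~\ref{n+1ton}. For the forward direction, I would first reduce to the case where $F$ is minimal: if $F\sim_{\A}Id_{\K^r}\times F'$ with $F'$ minimal, then extra-niceness of $(n,p)$ descends to the lower-dimensional pair by iterated application of Proposition~\ref{n+1ton}, so one obtains an $\A_e$-codimension 1 section of $F'$ there and augments it by the Morse function $x_1^2+\dots+x_{r+1}^2$ exactly as in the last paragraph of the proof of Proposition~\ref{iestrella}. Now assume $F$ is minimal and suppose for contradiction that no linear $L:\K^{p+1}\to\K$ gives $\Aecod(f_L)=1$; by Proposition~\ref{equiv}(iv) this is equivalent to $\mathscr M_{p+1}\not\subseteq tL(\Lift(F))+\langle L\rangle$ for every linear $L$, and via Damon's identity to ${_{\Delta(F)}\mathscr K_e}\mbox{-cod}(L)\neq 1$ for every linear $L$. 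Minimality of $F$ excludes the possibility that every linear section is stable, so at least one linear $L$ satisfies ${_{\Delta(F)}\mathscr K_e}\mbox{-cod}(L)\geq 2$. Analysing the action of $_{\Delta(F)}\mathscr K$ on the $(p{+}1)$-dimensional space of linear forms, the absence of a codimension 1 orbit will force the existence of a positive-dimensional modulus of $_{\Delta(F)}\mathscr K_e$-orbits of codimension $\geq 2$, in direct analogy with the explicit complete-transversal computation for $F_{3,3}$ carried out in the proof of Theorem~\ref{cod2notsimple}. Damon's identity then translates this modulus into a continuous family of $\A$-orbits of codimension $\geq n+2$ in $J^k(n,p)$, all contained in the single $\mathscr K$-orbit determined by $F$ as their common stable unfolding, and this $\mathscr K$-orbit has codimension at most $n+1$. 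Exactly as at the end of the proof of Proposition~\ref{n+1ton}, this configuration rules out the set $\Lambda$ required by Definition~\ref{extranice}, contradicting the extra-nice hypothesis.

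For the converse, assume $(n+1,p+1)$ is nice and that every stable $F:(\K^{n+1},0)\to(\K^{p+1},0)$ admits an $\A_e$-codimension 1 hyperplane section. Then Proposition~\ref{sectionimpliescod2simple} shows that every $\A_e$-codimension 2 germ in $(n,p)$ is simple. Niceness of $(n+1,p+1)$ implies niceness of $(n,p)$, which combined with Proposition~\ref{cod1simple} gives a finite list of $\A$-orbits of $\A_e$-codimension $\leq 1$ in $(n,p)$. I would then take $\Lambda\subset J^l(n,p)$, for $l$ beyond the Mather--Gaffney determinacy bound recalled after Definition~\ref{extranice}, to be the union of the closures of all $\A$-orbits of $\A_e$-codimension $\geq 2$: upper-semicontinuity of $\Aecod$ makes it closed, it is manifestly $\A$-invariant, and its complement is the finite family of $\A$-orbits of $\A_e$-codimension $\leq 1$. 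The simplicity of $\A_e$-codimension 2 germs together with the hyperplane-section hypothesis on every stable germ in $(n+1,p+1)$ keeps the codimension of $\Lambda$ strictly greater than $n+1$, verifying Definition~\ref{extranice}.

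The hardest step is the first half of the forward direction: upgrading the non-existence of a $_{\Delta(F)}\mathscr K_e$-codimension 1 linear form into a genuinely positive-dimensional modulus of linear forms of codimension $\geq 2$, and certifying that the resulting family of $\A$-orbits lies inside a single $\mathscr K$-orbit of codimension at most $n+1$ in $J^k(n,p)$. The only explicit model is the complete-transversal analysis of $F_{3,3}$ in the proof of Theorem~\ref{cod2notsimple}; performing this computation in a structural, dimension-agnostic way using only the extra-nice property of $(n,p)$ and the stability of $F$ is the technical heart of the argument.
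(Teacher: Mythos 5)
Your overall architecture matches the paper's (reduce to minimal $F$, derive modality from the failure of a codimension~1 section, handle the converse through finiteness of low-codimension orbits), but both halves stop short at exactly the point where the work has to be done. In the forward direction you defer the key step --- ``the absence of a codimension 1 orbit will force a positive-dimensional modulus of $_{\Delta(F)}\mathscr K_e$-orbits'' --- and the route you choose (analysing the $_{\Delta(F)}\mathscr K$-action on the space of linear forms, generalising the $F_{3,3}$ complete-transversal computation) would require an additional lemma identifying openness of the orbit of $L$ among linear forms with $_{\Delta(F)}\mathscr K_e$-$\cod(L)=1$. The paper needs none of this: if the minimal $\mathscr A_e$-codimension of a section of a minimal stable $F$ is $k\geq 2$, the jets realising this minimal value form a Zariski-open dense subset of the corresponding $\mathscr K$-orbit, which has codimension $n+1$ in $J^l(n,p)$, while each individual $\A$-orbit among them has codimension $n+k$, i.e.\ codimension $k-1\geq 1$ \emph{inside} the $\mathscr K$-orbit; a dense open subset cannot be a finite union of positive-codimension subvarieties, so one obtains a $(k-1)$-parameter family of $\A$-orbits for free, and no $\Lambda$ of codimension $>n+1$ with finite complement can exist. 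You should replace your moduli-of-linear-forms step by this direct codimension count on the source side of Damon's duality.

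In the converse your choice of $\Lambda$ is reasonable, but the assertion that ``simplicity of $\mathscr A_e$-codimension 2 germs together with the hyperplane-section hypothesis keeps $\cod\Lambda>n+1$'' is not an argument, and the emphasis on codimension~2 simplicity is misleading: Proposition~\ref{nocod2} of the paper exhibits a pair, $(9,10)$, in which all $\mathscr A_e$-codimension 2 germs are simple and yet the pair is not extra-nice, so codimension~2 simplicity cannot be what bounds $\cod\Lambda$. What actually does the work is that the hyperplane-section hypothesis, through Proposition~\ref{equiv} (and augmentation for non-minimal unfoldings), places an open $\A$-orbit --- equivalently a dense set of jets of $\mathscr A_e$-codimension at most 1 --- in every $\mathscr K$-orbit of codimension at most $n+1$, so $\Lambda$ meets each such $\mathscr K$-orbit in a proper closed subset and has codimension at least $n+2$ there, while the remaining $\mathscr K$-orbits already lie in codimension at least $n+2$ by niceness of $(n+1,p+1)$. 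The paper instead argues the contrapositive: failure of extra-niceness, given that $(n+1,p+1)$ is nice, forces a single $\mathscr K$-orbit of codimension at most $n+1$ all of whose non-stable sections are non-simple, hence of $\mathscr A_e$-codimension greater than 1 by Proposition~\ref{cod1simple}. Either route works, but only once the mechanism above is made explicit.
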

\begin{proof}
1) Firstly we are going to prove that if $(n,p)$ is in the extra-nice dimensions then every stable germ $F:(\mathbb K^{n+1},0)\rightarrow(\mathbb K^{p+1},0)$ of $\mathscr K$-codimension $n+1$ admits a hyperplane section of $\mathscr A_e$-codimension 1.

Suppose that there exists a stable  germ $F:(\mathbb K^{n+1},0)\rightarrow(\mathbb K^{p+1},0)$ of $\mathscr K$-codimension $n+1$ which does not admit an $\mathscr A_e$-codimension 1 hyperplane section. Then there exists a $k>1$ and a section   $f':(\mathbb K^n,0)\rightarrow(\mathbb K^p,0)$ of $\mathscr A_e$-codimension $k$  such that all sections of $F$ have $\mathscr A_e$-codimension greater than or equal to $k$. Therefore the $\A$-orbit of $f'$ is not open in its $\mathscr K$-orbit, so the union of a $(k-1)$-parameter family of $\A$-orbits of codimension $n+k$ is a Zariski open subset of the $\mathscr K$-orbit. Then this  $\mathscr K$-orbit is a codimension $n+1$ subset of $J^l(n,p)$ which is not a finite union of $\A$-orbits of minimal codimension, so there does not exist a subset of codimension greater than $n+1$ such that its complement is a finite number of $\A$-orbits, contradicting the fact that $(n,p)$ is in the extra-nice dimensions.

Lets consider now $F:(\mathbb K^{n+1},0)\rightarrow(\mathbb K^{p+1},0)$ a stable germ of $\mathscr K$-codimension $n-r$, $r\geq 0$. Then $F$ is a trivial unfolding of a minimal stable germ $\tilde{F}:(\mathbb K^{n-r},0)\rightarrow(\mathbb K^{p-r},0).$ As $(n,p)$ is in the extra-nice dimensions hence, by Proposition \ref{n+1ton}, $(n-r-1,p-r-1)$ also is and we can apply the above argument to find an $\mathscr A_e$-codimension 1 hyperplane section $\tilde{f}$ of $\tilde{F}.$ Then the augmentation of $\tilde{f}$ is an $\mathscr A_e$-codimension 1 hyperplane section of $F$  (see~\cite{robertamond}).



2) Suppose that $(n,p)$ is not in the extra-nice dimensions. Following the proof of Proposition \ref{n+1ton} there exists $\Gamma$ of codimension less than or equal to $n+1$ with an open Zariski subset foliated by infinite $\A$-orbits of codimension greater than $n+1$.

If no single $\mathscr K$-orbit contains the infinite $\A$-orbits, there is a stratum of codimension equal to the codimension of $\Gamma$ with modality of $\mathscr K$-orbits, contradicting the fact that
that $(n+1,p+1)$ is nice dimensions. Therefore, there exists a $\mathscr K$-orbit which contains the infinite $\A$-orbits. This $\mathscr K$-orbit has codimension less than or equal to $n+1$, therefore there exists a stable germ $F:(\mathbb
K^{n+1},0)\rightarrow(\mathbb K^{p+1},0)$ (not necessarily minimal)
such that all non stable hyperplane sections $f$ of $F$ are not $\A$-simple.
Then by Proposition \ref{cod1simple} $\Aecod(f)>1$.

\end{proof}

In order to determine when a pair $(n,p)$ is in the extra-nice dimensions, by Theorem \ref{sectionnice}, we need to know the stable germs $(\mathbb K^{n+1},0)\rightarrow (\mathbb K^{p+1},0)$, so we describe Mather's procedure to obtain the normal forms for all stable germs. Start with a germ $f_0:(\mathbb K^s,0)\rightarrow (\mathbb K^t,0)$ of rank 0 and find a basis $\{\phi_1,\ldots,\phi_d\}$ of $\frac{\mathscr M_n\theta(f_0)}{T\mathscr K_e f_0}$. Then, $F:(\mathbb K^{s+d},0)\rightarrow (\mathbb K^{t+d},0)$ given by $$F(x,u_1,\ldots,u_d)=(f_0(x)+\sum_{i=1}^d u_i\phi_i(x),u_1,\ldots,u_d)$$ is stable. Furthermore, any stable germ can be obtained by this procedure.

By Proposition \ref{iestrella}, all stable corank 1 germs admit an $\mathscr A_e$-codimension 1 hyperplane section, so we need to study simple germs $f_0$ of corank at least 2. The rank 0 germs $f_0$ have been classified by several authors such as Mather, Arnol'd, Giusti, Wirthm\"{u}ller, Damon, du Plessis, Wall and Gibson. A good account can be found in the book by du Plessis and Wall \cite{duplessiswallbook}.

In the following discussion of simple algebras of corank greater than 1, there are two of corank 2 which play a special role: $$B_{p,q}=(x^p+y^q,xy),\,\,\, B_{p,q}'=(x^p,y^q,xy).$$ Notice that $B_{p,q}'$ is obtained from $B_{p,q}$ by adding its jacobian.

\subsection{Algebras}

We describe the algebras by the number of variables and generators.
We only list the least degenerate ones (which will be enough for our
purpose as we will see in the next section).

Case $n\geq p$:

\begin{enumerate}

\item With 1 generator and any number of variables: $A_k$, $D_k$, $E_6$, $E_7$ and $E_8$, which are of corank 1.

\item With 2 generators and 2 variables: $B_{p,q}$. The stable germs in these algebras are in $(p+q,p+q)$.

\item With 2 generators and $3$ variables: $P_{p,q}=B_{p,q}+(z^2,0)$. The stable germs in these algebras are in $(p+q+2,p+q+1)$.

\item With 2 generators and more than 3 variables there are no simple algebras. For example, the simplest algebra of corank 2 with 4 variables is $(x^2+y^2+z^2,y^2+z^2+\lambda w^2)$ (non-simple because there are 4 quadratics), whose stable germ is in $(9,7)$ and $(8,6)$ is the boundary of the nice dimensions.

\item With more than 2 generators: there are no simple algebras.

\end{enumerate}

Case $n<p$:

\begin{enumerate}

\item With 2 variables: $(B'_{p,q},0,\ldots,0)$ whose stable germs are in $(2p+2q-2+k(p+q-2),2p+2q-1+k(p+q-1))$ and $(B_{p,q},0,\ldots,0)$ whose stable germs are in $(p+q+k(p+q-1),p+q+k(p+q))$, where $k$ is the number of zeros.


\item With 3 variables.

\begin{enumerate}





\item With 4 generators: $f(x,y,z)=(x^2-y^2,y^2+z^2,xz,yz)$. The stable germ is in $(12,13)$. If we add $k$ zeros the stable germ is in $(12+9k,13+10k).$


\item With 5 generators: $f(x,y,z)=(x^2-y^2,y^2+z^2,xy,xz,yz)$ which is obtained from $P_{2,2}$ by adding its jacobian. The stable germ is in $(15,17)$.
If we add $k$ zeros the stable germ is in $(15+12k,17+13k).$


\item With 6 generators: $f(x,y,z)=(x^2,y^2,z^2,xy,xz,yz).$  The stable germ is in $(18,21)$.If we add $k$ zeros the stable germ is in $(18+15k,21+16k).$

\item With more than 6 generators they are obtained from the above ones by adding zeros.
\end{enumerate}

\item With 4 variables the stable germs are in the boundary of the nice dimensions.

\end{enumerate}

\section{The boundary of the extra-nice dimensions}

Many of the calculations which did not make it to the final version of this article were either done or double-checked using an algorithm implemented in the computer package Singular developed by Hernandes, Miranda and Peñafort-Sanchis in \cite{presmatrix}. The calculations which appear in this section have been done by hand unless otherwise stated.

\subsection{The case $n=p$}\label{subsectionnn}

In Theorem \ref{cod2notsimple} we obtain a non-simple $\mathscr A_e$-codimension 2 germs in $(5,5)$ of type $B_{3,3}$. By Proposition \ref{sectionimpliescod2simple} this means that there exists a stable germ in $(6,6)$ which does not admit a hyperplane section of $\mathscr A_e$-codimension 1. Therefore, by Theorem \ref{sectionnice}, $(5,5)$ is not in the extra-nice dimensions. Since $(5,5)$ is in the nice dimensions we have the converse of Theorem \ref{sectionnice}, so to establish the boundary of the extra-nice dimensions we must verify if all stable germs in $(5,5)$ admit hyperplane sections of $\mathscr A_e$-codimension 1. Taking into account the adjacencies of discrete algebra types we only have to investigate the stable germ in $B_{3,2}$:
$$F_{3,2}(x,y,u_1,u_2,u_3)=(x^3+y^2+u_1x+u_2y+u_3x^2,xy,u_1,u_2,u_3)=(X,Y,U_1,U_2,U_3).$$

\begin{lem}\label{32} $\Lift(F_{3,2})$ is generated by
$$\eta_1=\left(\begin{array}{c}6X\\
5Y\\
4U_1\\
3U_2\\
2U_3\end{array}\right), \eta_2=\left(\begin{array}{c}
4U_3Y+2U_1U_2\\
X\\
-5Y-3U_2U_3\\
-3U_1\\
-4U_2\end{array}\right),
\eta_3=\left(\begin{array}{c}
\frac{4}{3}U_2Y\\
-\frac{1}{9} U_3Y\\
X+\frac{1}{9}U_1U_3\\
-\frac{5}{3} Y\\
-\frac{2}{3} U_1+\frac{2}{9} U_3^2\end{array}\right)$$

$$\eta_4=\left(\begin{array}{c}
\frac{3}{2}U_1Y\\
-\frac{1}{4} U_2Y\\
-2U_3Y\\
X+\frac{1}{4}U_2^2\\
-\frac{5}{2}Y\end{array}\right),
\eta_5=\left(\begin{array}{c}
\frac{5}{3}Y^2+\frac{1}{9}U_2U_3Y\\
(-\frac{2}{9}U_1+\frac{2}{27}U_3^2)Y\\
-\frac{4}{3}U_2Y+\frac{2}{9}U_1^2-\frac{2}{27}U_1U_3^2\\
-\frac{2}{9}U_3Y\\
X+\frac{5}{9}U_1U_3-\frac{4}{27}U_3^3\end{array}\right)$$
\end{lem}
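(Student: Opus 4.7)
The plan is to follow the strategy used in the proof of Lemma \ref{lift33}: exhibit explicit source vector fields $\xi_i$ whose images under $dF_{3,2}$ realise the proposed $\eta_i\circ F_{3,2}$, and then invoke the free-divisor property of the discriminant of $F_{3,2}$ to conclude that these five generators exhaust $\Lift(F_{3,2})$.

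First I would observe that $F_{3,2}$ is a stable equidimensional germ of type $B_{3,2}$ from $\C^5$ to $\C^5$, so in parallel with the argument used for $F_{3,3}$, $\Delta(F_{3,2})$ is a free divisor and hence $\Lift(F_{3,2})=\Derlog(\Delta(F_{3,2}))$ is a free $\O_5$-module of rank $5$. It therefore suffices to produce five $\O_5$-linearly independent liftable vector fields.

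Next, since $F_{3,2}$ is weighted homogeneous with source weights $(\mathrm{wt}(x),\mathrm{wt}(y),\mathrm{wt}(u_1),\mathrm{wt}(u_2),\mathrm{wt}(u_3))=(2,3,4,3,2)$ and target weights $(6,5,4,3,2)$, the Euler vector field $\eta_1$ is automatically liftable by $\xi_1=2x\partial_x+3y\partial_y+4u_1\partial_{u_1}+3u_2\partial_{u_2}+2u_3\partial_{u_3}$. For the remaining four $\eta_i$ I would propose candidate lowerable fields $\xi_2,\xi_3,\xi_4,\xi_5\in\theta_5$ modelled on the $\xi_i$ appearing in the proof of Lemma \ref{lift33}, modified for the replacement of the cubic term $y^3$ by $y^2$; for instance
$$\xi_2=(y+u_2)\partial_x+(x^2+u_3x+u_1)\partial_y+(-5xy-3u_2u_3)\partial_{u_1}-3u_1\partial_{u_2}-4u_2\partial_{u_3}.$$
The identity $dF_{3,2}(\xi_i)=\eta_i\circ F_{3,2}$ then reduces to a routine componentwise polynomial check for each $i=2,3,4,5$.

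The main technical obstacle is the bookkeeping in finding the correct coefficients for $\xi_3,\xi_4,\xi_5$ so that all the cross-terms cancel in the first component (the only component of $F_{3,2}$ which is genuinely nonlinear). The $\O_5$-linear independence of $\eta_1,\ldots,\eta_5$, which is needed to conclude that they generate the rank-$5$ free module $\Lift(F_{3,2})$ rather than a proper free submodule, can be verified from their leading-order parts, or by computing the determinant of the $5\times 5$ matrix of their components and matching it up to a unit with a reduced defining equation of $\Delta(F_{3,2})$ via Saito's criterion. As the authors note, any step in this computation can also be cross-checked with the Singular algorithm of Hernandes, Miranda and Pe\~nafort Sanchis.
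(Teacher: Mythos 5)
Your proposal follows essentially the same route as the paper's proof: the paper likewise uses the free-divisor property of the discriminant to fix the number of generators at five, notes that the Euler field is liftable, and exhibits explicit lowerable fields $\xi_2,\ldots,\xi_5$ (your candidate $\xi_2$ coincides exactly with the paper's) verifying $dF_{3,2}(\xi_i)=\eta_i\circ F_{3,2}$. Your added remark that generation of the rank-$5$ free module should be confirmed via Saito's criterion, rather than by $\O_5$-linear independence alone, is a point the paper's proof states only tersely, but it does not alter the argument.
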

\begin{proof} $F_{3,2}$ is a free divisor so $\Lift(F_{3,2})$ is generated by 5 vector fields. The Euler vector field is clearly liftable and the other 4 are all linearly independent and liftable by the following lowerable vector fields:
$$\xi_2=\left(\begin{array}{c}
y+u_2\\
x^2+u_1+u_3x\\
-5xy-3u_2u_3\\
-3u_1\\
-4u_2\end{array}\right),
\xi_3=\left(\begin{array}{c}
-\frac{1}{3}x^2-\frac{1}{9}u_3x\\
\frac{1}{3}xy\\
x^3+y^2+u_1x+u_2y+u_3x^2+\frac{1}{9}u_1u_3\\
-\frac{5}{3}xy\\
-\frac{2}{3}u_1+\frac{2}{9}u_3^2\end{array}\right)$$

$$\xi_4=\left(\begin{array}{c}
\frac{1}{2}xy\\
-\frac{1}{2}y^2-\frac{1}{4}u_2y\\
-2u_3xy\\
x^3+y^2+u_1x+u_2y+u_3x^2+\frac{1}{4}u_2^2\\
-\frac{5}{2}xy\end{array}\right),
\xi_5\left(\begin{array}{c}
-\frac{1}{3}x^3-\frac{1}{9}u_3x^2+(-\frac{2}{9}u_1+\frac{2}{27}u_3^2)x\\
\frac{1}{3}x^2y+\frac{1}{9}u_3xy\\
-\frac{4}{3}u_2xy+\frac{2}{9}u_1^2-\frac{2}{27}u_1u_3^2\\
-\frac{2}{9}u_3xy\\
x^3\!+\!y^2\!+\!u_1x\!+\!u_2y\!+\!u_3x^2\!+\frac{5}{9}u_1u_3-\frac{4}{27}u_3^3
\end{array}\right)$$
\end{proof}

\begin{prop}\label{55}
When $n=p$, $(5,5)$ is the boundary of the extra-nice dimensions.
\end{prop}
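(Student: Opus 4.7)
The plan has two halves: show that $(5,5)$ is not extra-nice, and show that $(4,4)$ is. Combined with Proposition \ref{n+1ton} this pins down the boundary along the diagonal (every $(k,k)$ with $k\leq 4$ is extra-nice, every $(k,k)$ with $k\geq 5$ is not).

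The first half is essentially already in place. Theorem \ref{cod2notsimple} produces an $\mathscr A_e$-codimension $2$ non-simple germ of $B_{3,3}$-type in $(5,5)$, so Proposition \ref{extranicethen2} immediately rules out extra-niceness of $(5,5)$.

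For the second half I would invoke the converse direction of Theorem \ref{sectionnice}. Since $(5,5)$ is classically in the nice dimensions, it suffices to verify that every stable germ $F:(\mathbb K^5,0)\to(\mathbb K^5,0)$ admits an $\mathscr A_e$-codimension $1$ hyperplane section. Reading off the classification of simple algebras recorded immediately before this subsection, the corank $1$ stable germs are covered by Proposition \ref{iestrella}, and the only simple algebra of corank $\geq 2$ whose stable germ lies in the equidimensional case at dimension $5$ is $B_{3,2}$, with unfolding $F_{3,2}$. So the whole question collapses to exhibiting a single linear form $L$ with $\mathscr M_5\subset L(\Lift(F_{3,2}))+\langle L\rangle$.

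Using Lemma \ref{32} I would try $L=U_3$: the values $L(\eta_i)$ for $i=1,\ldots,5$ read off as $2U_3$, $-4U_2$, $-\tfrac{2}{3}U_1+\tfrac{2}{9}U_3^2$, $-\tfrac{5}{2}Y$ and $X+\tfrac{5}{9}U_1U_3-\tfrac{4}{27}U_3^3$ respectively. Absorbing $U_3$-multiples into $\langle L\rangle$ recovers $U_2$ from $\eta_2$, $U_1$ from $\eta_3$, $Y$ from $\eta_4$ and $X$ from $\eta_5$, so $X,Y,U_1,U_2,U_3\in L(\Lift(F_{3,2}))+\langle L\rangle$ and hence $\mathscr M_5\subset L(\Lift(F_{3,2}))+\langle L\rangle$. (If one prefers, a Nakayama argument applied to the finitely generated $\mathcal O_6$-submodule $L(\Lift(F_{3,2}))+\langle L\rangle$ of $\mathcal O_6$ yields the same conclusion directly from the $\mathscr M_5/\mathscr M_5^2$ level.) By Proposition \ref{equiv}, this is the desired $\mathscr A_e$-codimension $1$ hyperplane section.

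The main obstacle is choosing the right linear form $L$; once it is in hand, the verification is a matter of reading off Lemma \ref{32}. The fact that the ambient classification of corank $\geq 2$ simple algebras is so short in the equidimensional case is what makes the second half of the argument feasible at all.
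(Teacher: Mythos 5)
Your proposal is correct and follows essentially the same route as the paper: non-extra-niceness of $(5,5)$ from the non-simple $\mathscr A_e$-codimension $2$ germ of Theorem \ref{cod2notsimple}, and extra-niceness of $(4,4)$ via the converse of Theorem \ref{sectionnice} by reducing to the single corank $2$ algebra $B_{3,2}$ and checking $L=U_3$ against Lemma \ref{32} (the paper states $\mathscr M_5\subset L(\Lift(F_{3,2}))$ without the explicit component-by-component verification you supply, and reaches non-extra-niceness of $(5,5)$ through Proposition \ref{sectionimpliescod2simple} and Theorem \ref{sectionnice} rather than Proposition \ref{extranicethen2}, but these are cosmetic differences). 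No gaps.
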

\begin{proof}
It remains to prove the existence of a hyperplane section of $\mathscr A_e$-codimension 1 of $F_{3,2}.$
Let $L:\mathbb K^5\rightarrow \mathbb K$ be given by $L(X,Y,U_1,U_2,U_3)=U_3$. Then $\mathscr M_5\subset L(\Lift(F_{3,2}))$ and by Proposition \ref{equiv} iv), $F_{3,2}$ admits a hyperplane section of $\mathscr A_e$-codimension 1, so $(4,4)$ is in the extra-nice dimensions.
\end{proof}

\begin{rem}\label{mathersours}
The pair $(5,5)$ is in the nice dimensions but not in the extra-nice
dimensions. In $J^l(5,5)$ for a sufficiently high $l$ Mather's bad set $\Pi (5,5)$ is a codimension 6 $\mathscr K$-invariant set given by $\Pi (5,5)=\overline{A_6\cup B_{3,3}}.$ Its complement is composed by the $\mathscr K$-orbits $A_i$, $i=1,\ldots,5$, $B_{2,2}$ and $B_{3,2}$, all of which have a stable
representative in $(5,5).$ In the $\mathscr K$-orbit of $B_{3,3}$ there is a 1-parameter family of $\mathscr A$-orbits of
$\mathscr A_e$-codimension 2 (i.e. $\mathscr A$-codimension 7) which
is dense in the $\mathscr K$-orbit (Theorem \ref{cod2notsimple}).
Therefore we cannot find an $\mathscr A$-invariant set of codimension
greater than or equal to $7$  such that the
complement is a finite number of $\mathscr A$-orbits, and hence
$(5,5)$ is not in the extra-nice dimensions.


On the other hand, in $(4,4)$ the set  $\Pi(4,4)$ is a
codimension 5 $\mathscr K$-invariant set in $J^l(4,4)$ given by $\Pi (4,4)=\overline{A_5\cup B_{3,2}}.$ Its complement is composed by the
$\mathscr K$-orbits $A_i$, $i=1,\ldots,4$ and $B_{2,2}$.  Both $\mathscr K$-orbits $A_5$ and $B_{3,2}$ have an open $\mathscr A$-orbit. Let $\Lambda$ be the
union of the closure of the complement of these open orbits in $A_5$ and $B_{3,2}$ and of all the $\mathscr A$-orbits of
codimension greater than or equal to 6 in all the $\mathscr
K$-orbits. This is a codimension 6 $\mathscr A$-invariant subset
such that the complement of it is a finite number of $\mathscr
A$-orbits and hence $(4,4)$ is in the extra-nice dimensions.

\end{rem}

\subsection{How to go from $(n,p)$ ($n\leq p$) to $(n,p+1)$ by adding a $0$
component}\label{bpqtobpq0}

Let $f_0:(\mathbb K^n,0)\longrightarrow (\mathbb K^p,0)$, $n\leq p$, be a $\mathscr K$-finitely determined map-germ. Suppose $\displaystyle \frac{\mathcal{O}_n}{\langle f_0\rangle}\cong\mathbb{K}\{\sigma_0,\ldots,\sigma_r\}$, where $\sigma_0=1$. When $n=p$ we shall consider $\sigma_r=J(f_0)$.

Let $f:(\mathbb K^n,0)\longrightarrow (\mathbb K^{p+1},0)$ be given by $f=(f_0,0)$.

Let $F_0:(\mathbb K^{n+k},0)\longrightarrow (\mathbb K^{p+k},0)$ be a minimal stable unfolding of $f_0$, $F_0(x,u_1,\ldots,u_k)=(\tilde{F}(x,u),u).$

Let $F:(\mathbb K^{n+k+r},0)\longrightarrow (\mathbb K^{p+k+r+1},0)$  be a minimal stable unfolding of $f$ given by $F(x,u,w)=(\tilde{F}(x,u),u, Z(x,w),w)$ where $Z(x,w)=\sum_{i=1}^r\sigma_i(x)w_i.$ We denote the coordinates in target by $(X,U,Z,W).$

We have that

$$dF=\left(\begin{array}{ccccccccc}
\frac{\partial \tilde{F}_1}{\partial x_1} & \cdots & \frac{\partial \tilde{F}_1}{\partial x_n} & \frac{\partial \tilde{F}_1}{\partial u_1} & \cdots & \frac{\partial \tilde{F}_1}{\partial u_k} & 0 & \cdots & 0\\
\vdots & & & & & & & & \\
\frac{\partial \tilde{F}_p}{\partial x_1} & \cdots & \frac{\partial \tilde{F}_p}{\partial x_n} & \frac{\partial \tilde{F}_p}{\partial u_1} & \cdots & \frac{\partial \tilde{F}_p}{\partial u_k} & 0 & \cdots & 0\\
0 & & 0 & 1 & \cdots & 0 & 0 & \cdots & 0\\
\vdots & & & & & & & & \\
0 & & 0 & 0 & \cdots & 1 & 0 & \cdots & 0\\
\frac{\partial Z}{\partial x_1} & \cdots & \frac{\partial Z}{\partial x_n} & 0 & & 0 & \sigma_1 & \cdots & \sigma_r\\
0 & & 0 & 0 & \cdots & 0 & 1 & \cdots & 0\\
\vdots & & & & & & & & \\
0 & & 0 & 0 & \cdots & 0 & 0 & \cdots & 1\end{array}\right)$$

\begin{prop}\label{liftlevantado} Let $\pi_1:(\mathbb K^{p+k+r+1},0)\longrightarrow (\mathbb K^{p+k},0)$ be the natural projection. Then $\Lift(F_0)=\pi_1(\Lift(F)|_{Z=W=0}).$
\end{prop}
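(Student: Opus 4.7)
The plan is to prove the equality as two inclusions of $\mathcal{O}_{p+k}$-submodules of $\theta_{p+k}$, in both cases directly from the defining lifting equation $dF\cdot\xi = \eta\circ F$ and the block structure of $dF$ displayed before the proposition.

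For the inclusion $\pi_1(\Lift(F)|_{Z=W=0}) \subseteq \Lift(F_0)$, I would take $\eta \in \Lift(F)$ with a lowering $\xi = (\xi_x, \xi_u, \xi_w)$ and evaluate the equation at $w=0$. Since $F(x,u,0) = (F_0(x,u), 0, 0)$ and since $\partial Z/\partial x_j = \sum_l w_l\,\partial \sigma_l/\partial x_j$ vanishes at $w=0$, reading the first $p+k$ rows yields
$$dF_0(x,u)\cdot \xi'(x,u) = \pi_1(\eta|_{Z=W=0})(F_0(x,u)),$$
with $\xi'(x,u) := (\xi_x(x,u,0), \xi_u(x,u,0))$, exhibiting $\pi_1(\eta|_{Z=W=0})$ as a liftable field over $F_0$.

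For the reverse inclusion $\Lift(F_0) \subseteq \pi_1(\Lift(F)|_{Z=W=0})$, I would argue constructively. Given $\eta_0 \in \Lift(F_0)$ with lowering $\xi_0 = (\xi_{0,x}, \xi_{0,u})$, the candidate $\eta = (\eta_0^X, \eta_0^U, 0, 0)$ with $\xi = (\xi_{0,x}, \xi_{0,u}, 0)$ satisfies the $X$- and $U$-rows of $dF\cdot\xi = \eta\circ F$ automatically, but produces a residual on the $Z$-row, namely
$$\sum_j \frac{\partial Z}{\partial x_j}\xi_{0,x_j} = \sum_{l=1}^r w_l\, h_l(x,u), \qquad h_l(x,u) := \sum_j \frac{\partial \sigma_l}{\partial x_j}(x)\,\xi_{0,x_j}(x,u),$$
which is not obviously of the form $\eta_Z\circ F$. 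To absorb it I would invoke Malgrange's Preparation Theorem applied to the stable germ $F_0$: since $\mathcal{O}_{n+k}/F_0^*\mathscr{M}_{p+k} \cong \mathcal{O}_n/\langle f_0\rangle = \mathbb{K}\{1, \sigma_1, \dots, \sigma_r\}$, the elements $\{1, \sigma_1, \ldots, \sigma_r\}$ generate $\mathcal{O}_{n+k}$ as an $F_0^*\mathcal{O}_{p+k}$-module, yielding expansions $h_l(x,u) = \sum_{m=0}^r c_{l,m}(\tilde F(x,u), u)\,\sigma_m(x)$. I would then modify $\xi$ by setting $\xi_{w_l}(x,u,w) := -\sum_{m=1}^r w_m\, c_{m,l}(\tilde F, u)$; after a dummy-index swap, the new contribution $\sum_l \sigma_l \xi_{w_l}$ cancels precisely the $m\geq 1$ part of the obstruction, leaving the residual $\sum_l w_l\, c_{l,0}(\tilde F, u)$, which is manifestly a function of $F$. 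Setting $\eta_Z(X,U,Z,W) := \sum_l W_l\, c_{l,0}(X,U)$ and $\eta_{W_l}(X,U,Z,W) := -\sum_m W_m\, c_{m,l}(X,U)$ (both linear in $W$, hence vanishing at $W=0$), the resulting $\eta = (\eta_0^X, \eta_0^U, \eta_Z, \eta_W)$ belongs to $\Lift(F)$ and satisfies $\pi_1(\eta|_{Z=W=0}) = \eta_0$.

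The principal obstacle is the $Z$-row equation: the derivatives $\partial Z/\partial x_j$ entangle the $w$-parameters with the source-only quantities $\partial \sigma_l/\partial x_j$, so the naive trivial extension of $\eta_0$ cannot factor through $F$. Malgrange's Preparation Theorem is precisely what is needed to rewrite the offending source functions as $F_0^*$-pullbacks weighted by the basis $\{\sigma_m\}$, at which point the compensating term through $\xi_w$ absorbs the obstruction. Once these choices are in place, the remaining verifications are immediate from the block form of $dF$.
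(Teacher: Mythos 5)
Your proof is correct and follows essentially the same route as the paper's: the easy inclusion by restricting the lifting equation $dF\cdot\xi=\eta\circ F$ to $w=0$ and projecting, and the reverse inclusion by extending $(\xi_0,\eta_0)$ trivially, isolating the residual on the $Z$-row, and absorbing it through the $w$-components of $\xi$ and the $(Z,W)$-components of $\eta$ via the Preparation Theorem and the basis $\{\sigma_0,\ldots,\sigma_r\}$. The only cosmetic difference is that you exploit the linearity of the obstruction in $w$ to prepare the coefficients $h_l(x,u)$ over $F_0$ (yielding coefficients linear in $W$), whereas the paper prepares $\lambda(x,u,w)=\sum_i\xi_0^i\,\partial Z/\partial x_i$ directly over the map $(F_0,w)$, obtaining coefficients $a_i(F_0,w)$.
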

\begin{proof}

It is immediate that $\Lift(F_0)\supset\pi_1(\Lift(F)|_{Z=W=0}).$ Only write $dF(\xi)=\eta\circ F$, take $Z=W=0$ and project $\xi$  into the first $n+k$ coordinates and $\eta$ into the first $p+k$ coordinates.

Now let $\xi_0=(\xi_0^1(x,u),\ldots,\xi_0^{n+k}(x,u))\in\theta_{n+k}$ and $\eta_0=(\eta_0^1(X,U),\ldots,\eta_0^{p+k}(X,U))\in\theta_{p+k}$ such that $dF_0(\xi_0)=\eta_0\circ F_0$.


We have that

$$dF(\xi_0,0,\ldots,0)=(\eta_0^1\circ F_0,\ldots,\eta_0^{p+k}\circ F_0,\lambda(x,u,w),0,\ldots,0)$$
where $\lambda(x,u,w)=\sum_{i=1}^n\xi_0^i\frac{\partial Z}{\partial x_i}.$


Since $\displaystyle \frac{\mathcal{O}_{\{(x,u,w)\}}}{\langle F_0,w\rangle}\cong\mathbb{K}\{\sigma_0,\ldots,\sigma_r\}$, it follows from the Preparation Theorem that there exist functions $a_i=a_i(x,u,w)$ such that
$$\lambda(x,u,w)=\sum_{i=0}^r a_i(F_0,w)\sigma_i(x).$$
Now let
$\zeta=(\xi_0^1,\ldots,\xi_0^{n+k},-a_1(F_0,w),\ldots,-a_r(F_0,w))\in\theta_{n+k+r}$ and \linebreak $\eta=(\eta_0^1,\ldots,\eta_0^{p+k},a_0(X,U,W),-a_1(X,U,W),\ldots,-a_r(X,U,W))$. Then, $dF(\zeta)=\eta\circ F$, that is, $\eta\in \Lift(F)$, and $\pi_1(\eta|_{Z=W=0})=\eta_0.$
\end{proof}

\begin{lem} The vectors $\eta_{ZW_j}=Z\sigma_j\frac{\partial}{\partial Z}+Z\frac{\partial}{\partial W_j}\in\theta_{p+k+r+1}$, $j=1,\ldots,r$, and $\eta_{W_iW_j}=W_i\sigma_j\frac{\partial}{\partial Z}+W_i\frac{\partial}{\partial W_j}\in\theta_{p+k+r+1}$, $i,j=1,\ldots,r$, belong to $\Lift(F).$
\end{lem}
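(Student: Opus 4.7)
My plan is to verify the lemma by a direct computation: for each listed vector field $\eta$ on the target I will produce an explicit source vector field $\xi \in \theta_{n+k+r}$ whose image under $dF$ equals $\eta \circ F$. The only input needed is the shape of the Jacobian matrix $dF$ displayed just before the statement.

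Reading off the $w_j$-column of $dF$, which has $\sigma_j$ in the $Z$-row, $1$ in the $W_j$-row, and $0$ elsewhere, one obtains the basic identity
$$dF\!\cdot\!\tfrac{\partial}{\partial w_j} \;=\; \sigma_j(x)\,\tfrac{\partial}{\partial Z}\Big|_{F} \;+\; \tfrac{\partial}{\partial W_j}\Big|_{F}.$$
Everything in the lemma scales off this single equation. For $\eta_{ZW_j}$ I would take $\xi = Z(x,w)\,\tfrac{\partial}{\partial w_j}$; since $Z\circ F = Z(x,w)$, multiplying the identity by $Z(x,w)$ gives $dF\!\cdot\!\xi = Z(x,w)\sigma_j(x)\tfrac{\partial}{\partial Z} + Z(x,w)\tfrac{\partial}{\partial W_j} = \eta_{ZW_j}\circ F$. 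For $\eta_{W_i W_j}$ I would take $\xi = w_i\,\tfrac{\partial}{\partial w_j}$, and since $W_i\circ F = w_i$, the same scaling yields $dF\!\cdot\!\xi = w_i\sigma_j(x)\tfrac{\partial}{\partial Z} + w_i\tfrac{\partial}{\partial W_j} = \eta_{W_iW_j}\circ F$. Each verification is a one-line calculation.

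The one point that is not a calculation but a notational subtlety is the interpretation of the coefficient $\sigma_j$ appearing in the $\partial/\partial Z$ direction: an element of $\Lift(F)$ must be an honest germ of vector field on the target $(\K^{p+k+r+1},0)$, so the symbol $\sigma_j$ there must be understood as a function of the target coordinates whose pullback by $F$ equals $\sigma_j(x)$. Such a lift exists by the Malgrange preparation theorem, since $\{\sigma_0,\ldots,\sigma_r\}$ maps to a basis of $\O_{n+k+r}/F^*\mathscr M_{p+k+r+1}$ and hence generates $\O_{n+k+r}$ as an $F^*\O_{p+k+r+1}$-module; this is precisely the preparation argument already used in the proof of Proposition \ref{liftlevantado}. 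I do not anticipate any real obstacle beyond acknowledging this convention, after which the liftings $\xi$ above close the proof.
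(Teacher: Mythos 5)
Your choice of lowerable fields and the resulting computation coincide exactly with the paper's own proof, which takes $\xi_{Zw_j}=Z\frac{\partial}{\partial w_j}$ and $\xi_{w_iw_j}=w_i\frac{\partial}{\partial w_j}$ and reads off the $w_j$-column of $dF$ in precisely the way you describe; the identity $dF\cdot\bigl(w_i\frac{\partial}{\partial w_j}\bigr)=w_i\sigma_j(x)\frac{\partial}{\partial Z}\big|_F+w_i\frac{\partial}{\partial W_j}\big|_F$ is correct as an equality in $\theta(F)$.

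The problem is the step where you dispose of what you call a notational subtlety, which is in fact the only point of the lemma that requires an argument. You assert that $\sigma_j$ can be read as a target function whose pullback by $F$ is $\sigma_j(x)$, and you invoke the preparation theorem. But the preparation theorem says that $\sigma_0,\dots,\sigma_r$ \emph{generate} $\O_{n+k+r}$ as an $F^*\O_{p+k+r+1}$-module; it does not place any $\sigma_j$ inside $F^*\O_{p+k+r+1}$, and for $j\geq 1$ this is actually impossible: the class of $\sigma_j$ in the local algebra $\O_{n+k+r}/F^*\mathscr M_{p+k+r+1}\O_{n+k+r}$ is a nonconstant basis element, whereas any $g\circ F$ is congruent to the constant $g(0)$ in that quotient. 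Concretely, for $f_0(x)=x^3$ one has $F(x,u,w_1,w_2)=(x^3+ux,u,w_1x+w_2x^2,w_1,w_2)$ and $\sigma_1=x$, and a quasihomogeneity argument (weights $1,2,99,98$ on the source) shows that $w_1x=w_1\sigma_1(x)$ does not lie in $F^*\O_5$, so the coefficient $W_1\sigma_1$ of $\eta_{W_1W_1}$ is not a function on the target at all. Your appeal to preparation therefore does not close the gap, and the vector fields as written are not literally elements of $\theta_{p+k+r+1}$; making the lemma precise requires a genuinely different fix (for instance, reformulating it so that only the $\partial/\partial W_j$-components are asserted, which is all that is used in the proof of Theorem \ref{temcomzero} since $th$ there annihilates the $\partial/\partial Z$-direction). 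In fairness, the paper's own one-line proof is silent on exactly the same point; you deserve credit for noticing the issue, but the resolution you propose is not correct.
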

\begin{proof}
Write $\xi_{Zw_j}=Z\frac{\partial}{\partial w_j}\in\theta_{n+k+r}$ and $\xi_{w_iw_j}=w_i\frac{\partial}{\partial w_j}\in\theta_{n+k+r}$, $i,j=1,\ldots,r.$ Then $wF(\eta_{ZW_j})=tF(\xi_{Zw_i})$ and $wF(\eta_{W_iW_j})=tF(\xi_{w_iw_j}).$
\end{proof}

\begin{teo}\label{temcomzero} If $F_0$ admits a hyperplane $\mathscr A_e$-codimension 1 section then $F$ also does. The converse is true if the image of the set of linear part of vector fields in $\Lift(F)$ by $\pi_2$ is a subset of $\mathscr{M}_{Z,W}\theta(\pi_2),$ where $\pi_2:(\mathbb K^{p+k+r+1},0)\longrightarrow (\mathbb K^{r+1},0)$ is the projection $\pi_2(X,U,Z,W)=(Z,W)$.
\end{teo}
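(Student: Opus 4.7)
The plan is to apply Proposition~\ref{equiv}(iv) to rephrase the existence of $\mathscr A_e$-codimension 1 hyperplane sections as the module inclusions
$$\mathscr M_{p+k}\subset L_0(\Lift(F_0))+\langle L_0\rangle\quad\text{for }F_0,\qquad \mathscr M_{p+k+r+1}\subset L(\Lift(F))+\langle L\rangle\quad\text{for }F,$$
for suitable linear forms $L_0$ on $\mathbb K^{p+k}$ and $L$ on $\mathbb K^{p+k+r+1}$. The theorem then becomes a question of transferring such linear forms, using the lifting machinery of Proposition~\ref{liftlevantado} together with the explicit generators of $\Lift(F)$ supplied by the preceding lemma.

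For the forward direction I would choose the extension $L(X,U,Z,W):=L_0(X,U)+\sum_{l=1}^r W_l$ and set $I:=L(\Lift(F))+\langle L\rangle$. A direct application of $L$ to the vector fields of the preceding lemma gives $L(\eta_{ZW_j})=Z$ and $L(\eta_{W_iW_j})=W_i$, so $Z$ and all $W_l$ lie in $L(\Lift(F))\subset I$, hence $\langle Z,W\rangle\subset I$. To handle $X_j$ (the case of $U_l$ being symmetric), I would use the hypothesis to write $X_j=L_0(\eta_0)+aL_0$ in $\mathcal O_{p+k}$, lift $\eta_0$ to $\eta\in\Lift(F)$ via Proposition~\ref{liftlevantado}, and compute
$$L(\eta)=L_0(\eta_0)-\sum_l a_l(X,U,W).$$
The crucial point is that the inhomogeneity $\lambda(x,u,w)=\sum_i\xi_0^i\,\partial Z/\partial x_i$ appearing in the proof of Proposition~\ref{liftlevantado} lies in $\langle w\rangle\cdot\mathcal O_{n+k+r}$, because $\partial Z/\partial x_i$ is $w$-linear; this allows one to choose Preparation coefficients $a_l\in\langle W\rangle\cdot\mathcal O_{p+k+r+1}$ vanishing at $W=0$. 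Since $\langle W\rangle\subset I$, it follows that $X_j\equiv L(\eta)+aL\pmod{I}$, hence $X_j\in I$.

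For the converse, decompose $L=L_0(X,U)+cZ+\sum d_lW_l$. Given $m\in\mathscr M_{p+k}$, write $m=L(\eta)+bL$ in $\mathcal O_{p+k+r+1}$ and restrict to $Z=W=0$; by Proposition~\ref{liftlevantado}, $\eta_0:=\pi_1(\eta|_{Z=W=0})\in\Lift(F_0)$, yielding
$$m=L_0(\eta_0)+h(X,U)+b(X,U,0,0)\,L_0(X,U),$$
with $h=c\,\eta^Z|_{Z=W=0}+\sum_ld_l\,\eta^{W_l}|_{Z=W=0}$. The hypothesis that the linear parts of $\eta^Z$ and $\eta^{W_l}$ are in $\mathscr M_{Z,W}$ forces their $(X,U)$-linear parts to vanish, so $h\equiv h_0\pmod{\mathscr M_{p+k}^2}$ for a constant $h_0=-L_0(\eta_0(0))$ (identified by evaluation at the origin). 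Absorbing $h_0$ into $L_0(\Lift(F_0))$ using the stability of $F_0$ (which provides a rich supply of vector fields with prescribed values at the origin), one obtains $m\in L_0(\Lift(F_0))+\langle L_0\rangle+\mathscr M_{p+k}^2$, and Nakayama's lemma applied to the ideal $I_0:=L_0(\Lift(F_0))+\langle L_0\rangle$ concludes $\mathscr M_{p+k}\subset I_0$.

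The principal technical obstacle will be, in the converse direction, handling the constant contribution $h_0$ arising from $\eta^Z(0)$ and $\eta^{W_l}(0)$: the linear-parts hypothesis confines $h-h_0$ to $\mathscr M_{p+k}^2$ but leaves $h_0$ to be absorbed using the abundance of lifts guaranteed by stability of $F_0$. In the forward direction, the delicate step is the observation that $\lambda\in\langle w\rangle$, which is precisely what lets one pick Preparation coefficients in $\langle W\rangle$; without this remark the naive computation fails to place $X_j$ in $I$.
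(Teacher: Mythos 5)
Your argument is essentially the paper's proof: the same extension $L=L_0+\sum_l W_l$ (the paper uses $h_0+W_r$), the same use of the vector fields $\eta_{ZW_j}$ and $\eta_{W_iW_j}$ to produce $\langle Z,W\rangle$, and the same transfer of liftable fields back and forth via Proposition~\ref{liftlevantado}; your explicit remark that $\lambda\in\langle w\rangle$ forces the Preparation coefficients $a_l$ into $\langle W\rangle$ makes precise a step the paper leaves implicit. One correction to your converse: the constant $h_0$ you single out as the principal obstacle is automatically zero, because $F$ and $F_0$ are \emph{minimal} stable unfoldings and hence every vector field in $\Lift(F)$ and $\Lift(F_0)$ vanishes at the origin; in particular the mechanism you propose for absorbing a nonzero $h_0$ --- liftable fields with prescribed nonzero values at the origin --- is not available for a minimal stable germ, so it is essential (and true) that the term simply vanishes. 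With that trivial repair your proof is complete and coincides with the paper's.
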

\begin{proof}
Let $h_0:(\mathbb K^{p+k},0)\to (\mathbb K,0)$ be a linear function such that $h_0=0$ defines  the $\mathscr A_e$-codimension 1 hyperplane section of the discriminant $V_0\subset (\mathbb K^{p+k},0)$ of $F_0,$ that is, $h_0$ has $_{V_0}\mathscr K_e$-codimension 1. Then $th_0(\Lift(F_0))+\langle h_0\rangle\supset \mathscr{M}_{p+k}.$

Let $h:(\mathbb K^{p+k+r+1},0)\to (\mathbb K,0)$ be given by $h(X,U,Z,W)=h_0(X,U)+W_r.$ We are going to prove that $h=0$ defines an $\mathscr A_e$-codimension 1 hyperplane section of the discriminant $V\subset (\mathbb K^{p+k+r+1},0)$ of $F$, that is,
$h$ has $_{V}\mathscr K_e$-codimension 1.

As $F_0$ and $F$ are both minimal stable unfoldings, all liftable vector fields in both cases vanish at zero.

Using notation from the previous lemma, it follows that $\langle th(\eta_{ZW_r}),th(\eta_{W_iW_r})\rangle\supseteq \langle Z,W\rangle_{\O_{{p+k+r+1}}}.$ So $th(\Lift(F))\supseteq \langle Z,W\rangle_{\O_{{p+k+r+1}}}.$

It remains to prove that $X_i,U_j\in th(\Lift(F))+\langle h\rangle$, $i=1,\ldots,n$; $j=1,\ldots,k.$

From hypothesis $X_i,U_j\in th_0(\Lift(F_0))+\langle h_0\rangle$. So, there exists $\eta_i^0(X,U)\in \Lift(F_0)$ such that $X_i=th_0(\eta_i^0)+\alpha h_0,$ $i=1,\ldots,n.$ It follows from Proposition~\ref{liftlevantado} that there exists $\eta_i\in \Lift(F)$, such that $\pi(\eta_i(X,U,0,0))=\eta_i^0(X,U).$ Now, from the definitions of $h$ and $h_0$,  $th(\eta_i)+ \langle Z,W\rangle_{\O_{p+k+r+1}}=th_0(\eta_i^0)+ \langle Z,W\rangle_{\O_{p+k+r+1}}.$ Then, $X_i\in th(\Lift(F))+\langle h\rangle$, $i=1,\ldots,n.$ Similar arguments hold for $U_j$, $j=1,\ldots,k.$ Therefore $h$ has $_{V}\mathscr K_e$-codimension 1.

Conversely, let $h:(\mathbb K^{p+k+r+1},0)\to (\mathbb K,0)$ be a linear function such that $h=0$ defines an $\mathscr A_e$-codimension 1 hyperplane section of the discriminant $V\subset (\mathbb K^{p+k+r+1},0)$ of $F.$ Then
$th(\Lift(F))+\langle h\rangle\supset \mathscr{M}_{p+k+r+1}.$

Since $h$ is linear we can write $h(X,U,Z,W)=h_0(X,U)+h_1(Z,W).$ It follows from the derivative of $F$ above  that $\pi_2(\Lift(F))\cap \mathscr{M}_{X,U}\theta(\pi_2)=\emptyset.$ Therefore $h_0\neq 0$ and $X_i,U_j\in h_0(\Lift(F))+\langle h_0\rangle,$ for all $i=1,\ldots,n$, $j=1,\ldots,k.$ Now from Proposition~\ref{liftlevantado} and as $\pi_2(\Lift(F))\subset \mathscr{M}_{Z,W}\theta(\pi_2),$ we actually have $X_i,U_j\in h_0(\Lift(F_0))+\langle h_0\rangle,$ for all $i=1,\ldots,n$, $j=1,\ldots,k,$ that is, $h_0$ has $_{V_0}\mathscr K_e$-codimension 1.
\end{proof}

\subsection{The case $n=p-1$}

First we consider $B'_{3,3}$ whose stable germ is $F'_{3,3}:(\mathbb K^{10},0)\rightarrow(\mathbb K^{11},0)$ given by $F'_{3,3}(x,y,u_1,u_2,u_3,v_1,v_2,v_3,w_1,w_2)=(x^3+u_1x+u_2y+u_3y^2,y^3+v_1x+v_2y+v_3x^2,xy+w_1x+w_2y,u,v,w).$ In her PhD thesis at Warwick University under the supervision of David Mond, Mirna G\'omez-Morales studied the existence of hyperplane sections of $\mathscr A_e$-codimension 1 for all stable germs in the algebra $B'_{p,q}$ (\cite{tesismirna}). She showed that the stable germ $F'_{3,2}$ of type $B'_{3,2}$ has an $\mathscr A_e$-codimension 1 hyperplane section but $B'_{3,3}$ does not, therefore $(9,10)$ is not in the extra-nice dimensions. Their method is the following. Consider $L:(\K^{11},0)\rightarrow (\K,0)$ be a linear polynomial given by $L(X_1,\ldots,X_{11})=a_1X_1+\ldots+a_{11}X_{11}$. Given an immersion $i:(\K^{10},0)\rightarrow (\K^{11},0)$, such that $L\circ i=0$, by Damon's Theorem we have that $\mathscr A_e$-$\cod(i^*(F'_{3,3}))=\mathscr
K_{V_e}$-$\cod(i)= _V\mathscr K_e$-$\cod(L)$, where $V$ is the discriminant of $F'_{3,3}$. Let $M_{F'_{3,3}}(X_1,\ldots,X_{11})$ be the matrix whose entries correspond to the linear parts in $X_1,\ldots,X_{11}$ of the generators of $tL(\Derlog(\Delta(F'_{3,3})))+\langle L\rangle\theta(L)$. Then $$M_{F'_{3,3}}(X_1,\ldots,X_{11})=(X_1,\ldots,X_{11})\cdot N_{F'_{3,3}}(a_1,\ldots,a_{11}).$$ Therefore, $\mathscr A_e$-$\cod(i^*(F'_{3,3}))=1$ if and only if the rank of $N_{F'_{3,3}}$ is $11$. In fact, the same argument proves that $\mathscr A_e$-$\cod(i^*(F'_{3,3}))=k$ if and only if the rank of $N_{F'_{3,3}}$ is $12-k$ for $k\geq 1$.

\begin{lem}
The linear parts of the generators of $\Lift(F'_{3,3})$ are

\begin{tabular}{rcl}
$\eta_1$&$=$&$3X\frac{\partial}{\partial X}+3Y\frac{\partial}{\partial Y}+2Z\frac{\partial}{\partial Z}+2U_1\frac{\partial}{\partial U_1}+2U_2\frac{\partial}{\partial U_2}+U_3\frac{\partial}{\partial U_3}+2V_1\frac{\partial}{\partial V_1}$\\&&$+2V_2\frac{\partial}{\partial V_2}+V_3\frac{\partial}{\partial V_3}+W_1\frac{\partial}{\partial W_1}+W_2\frac{\partial}{\partial W_2}$,\\
$\eta_2$&$=$&$3X\frac{\partial}{\partial Z}-3V_2\frac{\partial}{\partial V_3}+3U_1\frac{\partial}{\partial W_1}+4U_2\frac{\partial}{\partial W_2}$,\\
$\eta_3$&$=$&$3Y\frac{\partial}{\partial Z}-3U_1\frac{\partial}{\partial U_3}+4V_1\frac{\partial}{\partial W_1}+3V_2\frac{\partial}{\partial W_2}$,\\
$\eta_4$&$=$&$9X\frac{\partial}{\partial U_1}+3V_1\frac{\partial}{\partial V_3}+3Z\frac{\partial}{\partial W_1}+2U_1\frac{\partial}{\partial W_2}$,\\
$\eta_5$&$=$&$3Y\frac{\partial}{\partial U_1}-3Z\frac{\partial}{\partial U_3}+V_1\frac{\partial}{\partial W_2}$,\\
$\eta_6$&$=$&$3X\frac{\partial}{\partial U_2}-3U_2\frac{\partial}{\partial U_3}+Z\frac{\partial}{\partial W_2}$,\\
$\eta_7$&$=$&$X\frac{\partial}{\partial U_3}$,\\
$\eta_8$&$=$&$3U_2\frac{\partial}{\partial U_3}+9Y\frac{\partial}{\partial V_2}+2V_2\frac{\partial}{\partial W_1}+3Z\frac{\partial}{\partial W_2}$,\\
$\eta_9$&$=$&$3Y\frac{\partial}{\partial V_1}-3V_1\frac{\partial}{\partial V_3}+Z\frac{\partial}{\partial W_1}$,\\
$\eta_{10}$&$=$&$3X\frac{\partial}{\partial V_2}-3Z\frac{\partial}{\partial V_3}+U_2\frac{\partial}{\partial W_1}$,\\
$\eta_{11}$&$=$&$3Y\frac{\partial}{\partial V_3}+X\frac{\partial}{\partial W_2}$,\\
$\eta_{12}$&$=$&$X\frac{\partial}{\partial W_1}$,
$\eta_{13}=Y\frac{\partial}{\partial W_1}$,
$\eta_{14}=Y\frac{\partial}{\partial W_2}$,
\end{tabular}

where $(X,Y,Z,U_1,U_2,U_3,V_1,V_2,V_3,W_1,W_2)$ are the coordinates of the target.
\end{lem}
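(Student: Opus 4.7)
The plan is to parallel the constructive arguments of Lemmas~\ref{lift33} and~\ref{32}: for each claimed linear vector field $\eta_i$ on the target, I would exhibit an explicit lowerable vector field $\xi_i\in\theta_{10}$ with $dF'_{3,3}(\xi_i)=\eta_i\circ F'_{3,3}+\text{(higher order terms)}$, so that the $1$-jet of the resulting element of $\Lift(F'_{3,3})$ coincides with $\eta_i$. As a preliminary I would write $dF'_{3,3}$ explicitly as an $11\times 10$ matrix and note that the unfolding-parameter components of $F'_{3,3}$ are identities, so the pushforwards in the unfolding directions of any lowerable $\xi$ are determined by its $u,v,w$-components alone.

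Quasi-homogeneity yields $\eta_1$ at once: with source weights $\mathrm{wt}(x)=\mathrm{wt}(y)=\mathrm{wt}(u_3)=\mathrm{wt}(v_3)=\mathrm{wt}(w_1)=\mathrm{wt}(w_2)=1$ and $\mathrm{wt}(u_1)=\mathrm{wt}(u_2)=\mathrm{wt}(v_1)=\mathrm{wt}(v_2)=2$, the three non-trivial components of $F'_{3,3}$ become weighted homogeneous of degrees $3,3,2$, and so the target Euler field with weights $(3,3,2,2,2,1,2,2,1,1,1)$ on $(X,Y,Z,U_1,U_2,U_3,V_1,V_2,V_3,W_1,W_2)$ is lifted by the source Euler field. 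For each of the remaining candidates $\eta_2,\dots,\eta_{14}$ I would work weight by weight: pick the $u,v,w$-components of $\xi_i$ so that the $U,V,W$-coordinates of $dF'_{3,3}(\xi_i)$ match those of $\eta_i\circ F'_{3,3}$, and then solve for $\xi_i^{x},\xi_i^{y}$, possibly together with higher-order unfolding corrections, so that the three main coordinates also match modulo higher weight. The candidates $\eta_7,\eta_{12},\eta_{13},\eta_{14}$ of form $X\partial_{U_3}$, $X\partial_{W_1}$, $Y\partial_{W_1}$, $Y\partial_{W_2}$ are the subtlest, since they require $x^3$ and $y^3$ to lie in the appropriate tangent ideal; this follows from $\mathscr K$-finiteness of the underlying corank-$2$ rank-$0$ germ $(x^3,y^3,xy)\colon (\K^2,0)\to(\K^3,0)$ together with the Preparation Theorem, much as in the proof of Proposition~\ref{liftlevantado}.

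The main obstacle is to show that $\eta_1,\ldots,\eta_{14}$ exhaust the $\K$-span of the linear parts of elements of $\Lift(F'_{3,3})$, not merely that they lie in it. Because $14>11$, the image discriminant of $F'_{3,3}$ fails to be a free divisor, and the minimal-free-resolution shortcut implicit in Lemmas~\ref{lift33} and~\ref{32} no longer applies. I would therefore set up the finite-dimensional linear system for an arbitrary linear $\eta=\sum a_{ij}X_i\partial_{X_j}$ (with $121$ unknown coefficients, the constant terms vanishing because $F'_{3,3}$ is minimal), coupled to a polynomial Ansatz for $\xi$ of sufficiently high order, and impose $\eta\circ F'_{3,3}-dF'_{3,3}(\xi)\in\mathscr M_{10}^{N+1}\theta(F'_{3,3})$ for $N$ large enough to ensure $\mathscr A$-determinacy; the solution space should then be spanned by the listed $\eta_1,\ldots,\eta_{14}$. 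Given the size of this system, in practice I would run the presentation-matrix algorithm of Hernandes--Miranda--Pe\~nafort-Sanchis (\cite{presmatrix}) in \textsc{Singular}, as announced at the beginning of this section, to produce a minimal $\mathcal O_{11}$-generating set of $\Lift(F'_{3,3})$ and read off its $1$-jets.
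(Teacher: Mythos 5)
Your proposal matches the paper's own proof in its constructive half: the paper likewise takes $\eta_1$ to be the Euler field arising from exactly the quasihomogeneous weights you list, and for each of $\eta_2,\dots,\eta_{14}$ it simply exhibits an explicit lowerable vector field $\xi_i$ so that the corresponding liftable field has the stated linear part. The completeness question you rightly single out --- that these fourteen linear parts exhaust those of a generating set of $\Lift(F'_{3,3})$, which cannot be settled by the free-divisor count available for $F_{3,3}$, $F_{3,2}$ and $P_{2,2}$ since $14>11$ --- is not argued in the paper's proof at all but is delegated to the \textsc{Singular} computations announced at the start of Section 5 and to G\'omez-Morales's thesis, so your plan to settle it with the presentation-matrix algorithm of \cite{presmatrix} is precisely the missing verification rather than a different route.
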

\begin{proof}
$\eta_1$ is the Euler vector field and lowerable vector fields for the remaining ones are

\begin{tabular}{l}
$\xi_2=-u_2\frac{\partial}{\partial x}+x^2\frac{\partial}{\partial y}-3v_2\frac{\partial}{\partial v_3}+3u_1\frac{\partial}{\partial w_1}+4u_2\frac{\partial}{\partial w_2}$,\\
$\xi_3=3y^2\frac{\partial}{\partial x}-v_1\frac{\partial}{\partial y}-3u_1\frac{\partial}{\partial u_3}+4v_1\frac{\partial}{\partial w_1}+3v_2\frac{\partial}{\partial w_2}$,\\
$\xi_4=(-3x^2-2u_1)\frac{\partial}{\partial x}+X\frac{\partial}{\partial u_1}+3v_1\frac{\partial}{\partial v_3}+3Z\frac{\partial}{\partial w_1}+2u_1\frac{\partial}{\partial w_2}$,\\
$\xi_5=-v_1\frac{\partial}{\partial x}+3Y\frac{\partial}{\partial u_1}-3Z\frac{\partial}{\partial u_3}+v_1\frac{\partial}{\partial w_1}$,\\
$\xi_6=-xy\frac{\partial}{\partial x}+3X\frac{\partial}{\partial u_2}-3u_2\frac{\partial}{\partial u_3}+z\frac{\partial}{\partial w_2}$,\\
$\xi_7=X\frac{\partial}{\partial u_3}$,\\
$\xi_8=(-3y^2-2v_2)\frac{\partial}{\partial y}+3u_2\frac{\partial}{\partial u_3}+9Y\frac{\partial}{\partial v_2}+2v_2\frac{\partial}{\partial w_1}+3Z\frac{\partial}{\partial w_2}$,\\
$\xi_9=(-v_1x-xy)\frac{\partial}{\partial y}+3Y\frac{\partial}{\partial v_1}-3v_1\frac{\partial}{\partial v_3}+Z\frac{\partial}{\partial w_1}$,\\
$\xi_{10}=-u_2\frac{\partial}{\partial y}+3X\frac{\partial}{\partial v_2}-3Z\frac{\partial}{\partial v_3}+u_2\frac{\partial}{\partial w_1}$,\\
$\xi_{11}=-x^2y\frac{\partial}{\partial y}+3Y\frac{\partial}{\partial v_3}+X\frac{\partial}{\partial w_2}$,\\
$\xi_{12}=X\frac{\partial}{\partial w_1}$, $\xi_{13}=Y\frac{\partial}{\partial w_1}$, $\xi_{14}=Y\frac{\partial}{\partial w_2}$,\\
\end{tabular}

where $X=x^3+u_1x+u_2y+u_3y^2$, $Y=y^3+v_1x+v_2y+v_3x^2$ and $Z=xy+w_1x+w_2y.$

\end{proof}

The next result shows a counterexample of the converse of Proposition \ref{extranicethen2}.
\begin{prop}\label{nocod2}
Any hyperplane section of $F'_{3,3}$ has $\mathscr A_e$-codimension greater than 2.

\end{prop}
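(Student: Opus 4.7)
My plan is to use the formulation recalled just before the lemma: by Damon's theorem and the quasi-homogeneity of $F'_{3,3}$, for any linear $L:(\mathbb{K}^{11},0)\to(\mathbb{K},0)$ one has $\Aecod(i^*(F'_{3,3})) = {}_V\mathscr K_e$-$\cod(L) = 12 - r$, where $V = \Delta(F'_{3,3})$ and $r$ is the rank of $N_{F'_{3,3}}$, i.e.\ the dimension of the span of $\{tL(\eta_1),\ldots,tL(\eta_{14}),L\}$ inside the $11$-dimensional space of linear forms on the target. Thus it suffices to show that $r \leq 9$ for every choice of coefficients of $L$.

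The decisive observation is combinatorial. Going through the explicit list in the previous lemma, one checks directly that none of the vector fields $\eta_2,\ldots,\eta_{14}$ contains any term whose coefficient function involves the target variables $U_3$, $V_3$, $W_1$, or $W_2$. Consequently, for every $i\geq 2$ and every linear $L$, the form $tL(\eta_i)=\eta_i(L)$ lies in the $7$-dimensional subspace $\Pi := \langle X,Y,Z,U_1,U_2,V_1,V_2\rangle$ of linear forms on $(\mathbb{K}^{11},0)$.

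Now I would turn to the two remaining generators $L$ and $\eta_1(L)$, where $\eta_1$ is the Euler vector field. The coefficients of $\partial/\partial U_3$, $\partial/\partial V_3$, $\partial/\partial W_1$, $\partial/\partial W_2$ in $\eta_1$ are exactly $U_3$, $V_3$, $W_1$, $W_2$ (each of weight $1$), so the projections of $\eta_1(L)$ and of $L$ onto the complementary subspace $\Pi' := \langle U_3,V_3,W_1,W_2\rangle$ are both equal to $\ell_{U_3}U_3 + \ell_{V_3}V_3 + \ell_{W_1}W_1 + \ell_{W_2}W_2$, where the $\ell_{\bullet}$ denote the corresponding coefficients of $L$. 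Hence the projection of the entire span onto $\Pi'$ has dimension at most $1$, and the full span has dimension at most $7+1=8$. This yields $r\leq 8$ and therefore $\Aecod(i^*(F'_{3,3}))\geq 12-8 = 4 > 2$.

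No substantial obstacle is expected. The only points to verify are the combinatorial claim that $U_3$, $V_3$, $W_1$, $W_2$ never appear as coefficient functions in $\eta_2,\ldots,\eta_{14}$, which can be read off directly from the explicit list, and the weight computation for the Euler field, which is immediate. In fact, the same analysis yields the stronger bound $\Aecod(i^*(F'_{3,3}))\geq 4$.
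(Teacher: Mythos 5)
Your proof is correct and follows essentially the same route as the paper, which simply asserts after inspection that the rank of $N_{F'_{3,3}}$ is $8$ and hence the best hyperplane section has $\mathscr A_e$-codimension $4$. Your contribution is to actually justify the bound $r\le 8$ via the observation that the weight-one target variables $U_3,V_3,W_1,W_2$ never occur in the coefficient functions of $\eta_2,\dots,\eta_{14}$ and that $L$ and $tL(\eta_1)$ have equal projections onto $\langle U_3,V_3,W_1,W_2\rangle$ --- a clean verification of exactly the computation the paper leaves implicit.
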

\begin{proof}
Analyzing the linear parts of $\Lift(F'_{3,3})$ we can see that the rank of $N_{F'_{3,3}}$ is 8 so the best possible hyperplane section has $\mathscr A_e$-codimension 4.
\end{proof}

This means that in $(9,10)$ all $\mathscr A_e$-codimension 2 germs are simple, but $(9,10)$ is not in the extra-nice dimensions.

\begin{prop}
When $n=p-1$, $(9,10)$ is the boundary of the extra-nice dimensions.
\end{prop}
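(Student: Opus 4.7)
The plan is to establish that $(9,10)$ is not in the extra-nice dimensions while $(8,9)$ is; together with Proposition \ref{n+1ton} this will pin down $(9,10)$ as the boundary in the case $n=p-1$.

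For the first half I would invoke Proposition \ref{nocod2}: the stable germ $F'_{3,3}:(\mathbb K^{10},0)\to(\mathbb K^{11},0)$ admits no $\mathscr A_e$-codimension 1 hyperplane section, so the contrapositive of the first direction of Theorem \ref{sectionnice} yields that $(9,10)$ is not in the extra-nice dimensions.

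For the second, harder half I would show that $(8,9)$ lies in the extra-nice dimensions. Since $(9,10)$ is in Mather's nice dimensions, the converse direction of Theorem \ref{sectionnice} reduces the problem to producing an $\mathscr A_e$-codimension 1 hyperplane section for every stable germ $F:(\mathbb K^9,0)\to(\mathbb K^{10},0)$. I would split on the corank of $F$: corank $1$ stable germs are handled immediately by Proposition \ref{iestrella}, and for corank $\geq 2$ I would consult the classification of simple algebras recorded at the end of Section 4. Comparing the dimension formulas listed there against the target pair $(9,10)$, the only simple algebra producing a stable germ in $(9,10)$ of corank $\geq 2$ is $(B_{3,2},0)$ (equivalently $(B_{2,3},0)$), since all other corank $\geq 2$ algebras yield stable germs in dimensions $(12,13)$ or higher.

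For this remaining case I would apply the direct implication of Theorem \ref{temcomzero} with $F_0=F_{3,2}$: by Proposition \ref{55} the base germ admits an $\mathscr A_e$-codimension 1 hyperplane section, and Theorem \ref{temcomzero} then produces one for the stable germ of $(B_{3,2},0)$ in $(9,10)$. Combining the cases shows that every stable germ in $(9,10)$ has the required section, so $(8,9)$ is in the extra-nice dimensions, and Proposition \ref{n+1ton} extends the conclusion to all pairs $(n,n+1)$ with $n\leq 8$.

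I expect the main obstacle to be the exhaustive verification, via the algebra classification of Section 4, that $(B_{3,2},0)$ is really the only corank $\geq 2$ algebra contributing a stable germ in $(9,10)$; this is routine bookkeeping, but an overlooked case (say a $B'$-type or $P$-type algebra squeezing into $(9,10)$ after adding zero components) would derail the argument, so I would double-check each dimension formula carefully.
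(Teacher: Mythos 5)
Your proposal is correct and follows essentially the same route as the paper: $(9,10)$ fails to be extra-nice because $F'_{3,3}$ has no $\mathscr A_e$-codimension 1 hyperplane section (Proposition \ref{nocod2} plus Theorem \ref{sectionnice}), while $(8,9)$ is extra-nice because the only minimal corank $\geq 2$ stable germ in $(9,10)$ comes from $(B_{3,2},0)$, which inherits a codimension 1 section from $F_{3,2}$ via Theorem \ref{temcomzero}, so the converse of Theorem \ref{sectionnice} applies. The only cosmetic difference is that you cite Proposition \ref{55} where the paper cites Lemma \ref{32} for the section of $F_{3,2}$, and you make explicit the bookkeeping over the algebra list that the paper leaves implicit.
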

\begin{proof}
We must analyze corank 2 stable germs in $(9,10)$. These are given by the algebra type $(B_{3,2},0)$. By Lemma \ref{32}, $F_{3,2}$ admits an $\mathscr A_e$-codimension 1 hyperplane section, and by Theorem \ref{temcomzero} the stable germ for $(B_{3,2},0)$ admits a section too. Since $(9,10)$ is in the nice dimensions, by the converse of Theorem \ref{sectionnice}, $(8,9)$ is in the extra-nice dimensions.
\end{proof}

\subsection{The case $n<p-1$}

Up to now we have at the boundary of the extra-nice dimensions (5,5)
and (9,10). These two pairs of dimensions lie in the line of
equation $5n-4p-5=0$ in the $(n,p)$-plane. In fact,
we can generalise this to include the case $n<p-1$:

\begin{prop}
If $n\leq p$ the boundary of the extra-nice dimensions is given by
$5n-4p-5=0$, $p\geq 5$.
\end{prop}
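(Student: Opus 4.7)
The plan is to use Theorem~\ref{sectionnice} to recast the question as one about the existence of $\mathscr A_e$-codimension $1$ hyperplane sections of stable germs in $(n+1,p+1)$. Corank $1$ stable germs always admit such sections (Proposition~\ref{iestrella}), so the analysis reduces to stable germs of corank $\geq 2$, which are enumerated via the classification in Subsection~\ref{subsectionnn}. The starting observation is that the integer lattice points on the line $5n-4p-5=0$ with $n\leq p$ are exactly the pairs $(4j+5, 5j+5)$ for $j\geq 0$, and in the corresponding source-target dimension $(n+1, p+1) = (4j+6, 5j+6)$ there sits the stable germ $F_{3,3}$ (for $j=0$), $F'_{3,3}$ (for $j=1$), and the augmentation $(F'_{3,3}, 0, \ldots, 0)$ with $j-1$ added zero components (for $j\geq 2$).

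For necessity, the base cases $j = 0, 1$ are already established earlier: $F_{3,3}$ admits no codimension $1$ hyperplane section by Theorem~\ref{cod2notsimple} together with Proposition~\ref{sectionimpliescod2simple}, so $(5,5)$ is not in the extra-nice dimensions, and Proposition~\ref{nocod2} treats $F'_{3,3}$ analogously to place $(9,10)$ outside the extra-nice dimensions. For $j \geq 2$ I would induct using the contrapositive of the converse direction of Theorem~\ref{temcomzero}: writing $F_0$ for the augmentation of $F'_{3,3}$ by $j-2$ zeros and $F = (F_0, 0)$, the lifting hypothesis $\pi_2(\Lift(F)) \subset \mathscr M_{Z,W}\theta(\pi_2)$ follows from Proposition~\ref{liftlevantado} together with the lemma immediately after it, since the constructed lifts of elements of $\Lift(F_0)$ have their additional $Z,W$-components in $\langle Z,W\rangle$ and the new generators $\eta_{ZW_j}, \eta_{W_iW_j}$ have linear parts in $\mathscr M_{Z,W}\theta(\pi_2)$. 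Hence every lattice point $(4j+5, 5j+5)$ on the line fails to be in the extra-nice dimensions. For an arbitrary pair $(n, p)$ with $n \leq p$ and $5n-4p-5 \geq 0$, the identity $(n, p) = (4j+5+k, 5j+5+k)$ with $j = p - n \geq 0$ and $k = 5n-4p-5 \geq 0$, combined with the contrapositive of Proposition~\ref{n+1ton}, propagates the failure of extra-niceness up the diagonal from $(4j+5, 5j+5)$ and covers the entire region on and below the line.

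For sufficiency, consider $(n,p)$ with $n\leq p$ and $5n-4p-5 < 0$. Since $(n+1, p+1)$ lies in the nice dimensions throughout this region (by inspection of Mather's boundary for $n \leq p$), the converse of Theorem~\ref{sectionnice} reduces the task to verifying that every corank $\geq 2$ stable germ in $(n+1, p+1)$ admits a codimension $1$ hyperplane section. Substituting into the dimension formulas of Subsection~\ref{subsectionnn}, the three-variable corank $2$ algebras and every $B_{p,q}$ or $B'_{p,q}$ with $p+q\geq 6$ produce stable germs in dimensions $(N,P)$ with $5(N-1) - 4(P-1) - 5 \geq 0$, so they never appear above the line. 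The remaining corank $\geq 2$ stable germs are therefore zero-augmentations of the base germs $F_{2,2}, F_{2,3}, F_{3,2}$ and their $B'$ analogues; Lemma~\ref{32} together with Proposition~\ref{equiv} yields a section for $F_{3,2}$, G\'omez-Morales's result does so for $F'_{3,2}$, the remaining base cases are handled by analogous direct computations, and the forward direction of Theorem~\ref{temcomzero} propagates the sections to all zero-augmentations. The principal technical obstacle is the inductive verification of the lifting hypothesis in the necessity step; by Proposition~\ref{liftlevantado} it reduces to a finite linear-algebra check involving only the linear parts of the generators of $\Lift(F'_{3,3})$ listed just before Proposition~\ref{nocod2}.
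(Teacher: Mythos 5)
Your proposal is correct and follows essentially the same route as the paper: both reduce the question via Theorem~\ref{sectionnice} (and Proposition~\ref{iestrella}) to the existence of $\mathscr A_e$-codimension $1$ hyperplane sections of the corank $\geq 2$ stable germs, settle the base algebras $B_{3,3}$, $B'_{3,3}$, $B_{3,2}$, $B'_{3,2}$ by the explicit computations already carried out, and propagate to the zero-augmented algebras with Theorem~\ref{temcomzero}. The only differences are presentational: you make explicit the one-zero-at-a-time induction and the diagonal propagation via Proposition~\ref{n+1ton}, which the paper leaves implicit in its table of dimensions.
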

\begin{proof}
Let $F_0:(\mathbb K^{10},0)\to (\mathbb K^{11},0)$ be the minimal stable germ in $B'_{3,3}$ algebra and $F:(\mathbb K^{10+4k},0)\to (\mathbb K^{11+5k},0)$ be the minimal stable germ in $(B'_{3,3},0,\ldots,0)$ where $k$ is the number of zeros. Following notation of section~\ref{bpqtobpq0} we have $\sigma_1=x,$ $\sigma_2=y,$ $\sigma_3=x^2,$ $\sigma_4=y^2$ and $Z(x,y,w_1,w_2,w_3,w_4)=w_1x+w_2y+w_3x^2+w_4y^2$. Therefore, $\partial Z/\partial x=w_1+2w_3x$ and $\partial Z/\partial y=w_2+2w_4y$. So one can see that $F$ satisfies the conditions of Theorem~\ref{temcomzero}.
As $F_0$ does not admit a hyperplane section of $\mathscr A_e$-codimension 1 then by Theorem~\ref{temcomzero} $F$ does not. If $F_0:(\mathbb K^6,0)\to (\mathbb K^6,0)$ is the minimal stable germ in $B_{3,3}$ algebra and $F:(\mathbb K^{6+5k},0)\to (\mathbb K^{6+6k},0)$ is the minimal stable germ in $(B_{3,3},0,\ldots,0)$ where $k$ is the number of zeros, then similarly $F$ does not admit a hyperplane section of $\mathscr A_e$-codimension 1 since $\sigma_1=x,$ $\sigma_2=y,$ $\sigma_3=x^2,$ $\sigma_4=y^2.$

Let now $F_0:(\mathbb K^8,0)\to (\mathbb K^9,0)$ be the minimal stable germ in $B'_{3,2}$ algebra and $F:(\mathbb K^{8+3k},0)\to (\mathbb K^{9+4k},0)$ be the minimal stable germ in $(B'_{3,2},0,\ldots,0)$ where $k$ is the number of zeros. We have  $Z(x,y,w_1,w_2,w_3)=w_1x+w_2y+w_3x^2$. Therefore, $\partial Z/\partial x=w_1+2w_3x$ and $\partial Z/\partial y=w_2$. So one can see that $F$ satisfies the conditions of Theorem~\ref{temcomzero}.
As $F_0$  admits a hyperplane section of $\mathscr A_e$-codimension 1 then by Theorem~\ref{temcomzero} $F$ does. Similarly if $F_0:(\mathbb K^5,0)\to (\mathbb K^5,0)$ is the minimal stable germ in $B_{3,2}$ algebra and $F:(\mathbb K^{5+4k},0)\to (\mathbb K^{5+5k},0)$ is the minimal stable germ in $(B_{3,2},0,\ldots,0)$ where $k$ is the number of zeros,  then similarly $F$  admits a hyperplane section of $\mathscr A_e$-codimension 1.

Suppose $p=n+\ell$.  We have:

\begin{tabular}{||c|c|c|c||}

\hline
 \text{algebra} & $k$ & \text{stable germ} & \text{hyperplane section}\\
\hline
$(B_{3,3},0,\ldots,0)$ & $k=\ell$ & $(6+5\ell,6+6\ell)$ & $(5+5\ell,5+6\ell)$\\
\hline
$(B'_{3,3},0,\ldots,0)$ & $k=\ell -1$ & $(6+4\ell,6+5\ell)$ & $(5+4\ell,5+5\ell)$\\
\hline
$(B_{3,2},0,\ldots,0)$ & $k=\ell$ & $(5+4\ell,5+5\ell)$ & $(4+4\ell,4+5\ell)$\\
\hline
$(B'_{3,2},0,\ldots,0)$ & $k=\ell -1$ & $(5+3\ell,5+4\ell)$ & $(4+3\ell,4+4\ell)$\\
\hline
\end{tabular}

\vspace{0.5cm}

In conclusion, all the pairs of dimensions of sections of stable germs corresponding to algebras $B_{p,q}$ or $B_{p,q}'$ (or these two with zeros added) with $p+q\leq 5$ lie in the
extra-nice dimensions. The ones corresponding to
$B_{p,q}$ or $B_{p,q}'$ (or these two with zeros added) with $p+q\geq 6$ are not in the extra-nice dimensions.

Therefore $(5+4\ell,5+5\ell)$ is in the boundary of the extra-nice dimensions and belongs to the line $5n-4p-5=0.$ The sections of stable germs corresponding to algebras with 3 or more variables lie to the right of this line and are not in the extra-nice dimensions.

\end{proof}

\subsection{The case $n=p+1$}

Consider the simplest corank 2 algebra in this setting and its stable unfolding $P_{2,2}:(\mathbb K^6,0)\to (\mathbb K^5,0)$ given by
$$P_{2,2}(x,y,z,u_1,u_2,u_3)=(x^2+y^2+z^2+u_1x+u_2y,xy+u_3z,u_1,u_2,u_3).$$

\begin{lem}\label{p22} The generators of $\Lift(P_{2,2})$ are

$$\eta_1=\left(\begin{array}{c}2X\\
2Y\\
U_1\\
U_2\\
U_3\end{array}\right), \eta_2=\left(\begin{array}{c}
6YU_1-2U_2U_3^2\\
-YU_2+2U_1U_3^2\\
-8Y\\
4X+U_2^2+4U_3^2\\
-U_2U_3\end{array}\right),
\eta_3=\left(\begin{array}{c}
4YU_3+3U_1U_2U_3\\
2XU_3+2U_3^2\\
-4U_2U_3\\
-4U_1U_3\\
2Y\end{array}\right)$$

$$\eta_{4,5}=\left(\begin{array}{c}
6YU_2-2U_1U_3^2\\
-YU_1+2U_2U_3^2\\
4X+U_1^2+4U_3^2\\
-8Y\\
-U_1U_3\end{array}\right),
\left(\begin{array}{c}
4(X^2-12Y^2-6YU_1U_2+U_3^2(5X+3U_1^2+3U_2^2))\\
-8XY-16YU_3^2-9U_1U_2U_3^2\\
2XU_1+36YU_2-14U_1U_3^2\\
36YU_1+2XU_2-14U_1U_3^2\\
-10XU_3-2U_3^3\end{array}\right)$$

\end{lem}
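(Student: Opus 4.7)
The plan is to follow the strategy used in Lemmas \ref{lift33} and \ref{32}: first identify the expected number of generators of $\Lift(P_{2,2})$, then exhibit an explicit lowerable vector field $\xi_i \in \theta_6$ for each claimed $\eta_i$ so that $dP_{2,2}(\xi_i) = \eta_i \circ P_{2,2}$, and finally verify that the proposed list is complete. Since $P_{2,2}$ has finite $\A_e$-codimension, $\Lift(P_{2,2}) = \Derlog(\Delta(P_{2,2}))$, so the computation is really about vector fields tangent to the discriminant.

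First I would check that $P_{2,2}$ is weighted homogeneous: assigning weights $(1,1,1,1,1,1)$ to $(x,y,z,u_1,u_2,u_3)$ and target weights $(2,2,1,1,1)$ to $(X,Y,U_1,U_2,U_3)$, each component of $P_{2,2}$ is homogeneous of the correct degree. Consequently the source Euler field lifts the target Euler field $\eta_1 = 2X\partial_X + 2Y\partial_Y + U_1\partial_{U_1} + U_2\partial_{U_2} + U_3\partial_{U_3}$. For $\eta_2,\eta_3,\eta_4,\eta_5$, I would solve $dP_{2,2}(\xi_i) = \eta_i \circ P_{2,2}$ by ansatz: write $\xi_i$ with unknown polynomial coefficients in $x,y,z,u$ of appropriate weights, substitute $X = x^2+y^2+z^2+u_1x+u_2y$ and $Y = xy+u_3z$ in $\eta_i \circ P_{2,2}$, and compare with $dP_{2,2}(\xi_i)$ coefficient by coefficient. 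The $x\leftrightarrow y$, $u_1\leftrightarrow u_2$ symmetry of $P_{2,2}$ predicts the symmetry between $\eta_2$ and $\eta_4$, roughly halving the work.

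The main obstacle is proving that these five vector fields actually generate $\Lift(P_{2,2})$ as an $\O_5$-module. The cleanest route is to show that $\Delta(P_{2,2})$ is a free divisor and invoke Saito's criterion: arrange the coefficients of $\eta_1,\ldots,\eta_5$ in a $5\times 5$ matrix over $\O_5$ and check that its determinant is (up to a unit) a reduced defining equation of $\Delta(P_{2,2})$. The discriminant equation can be obtained by eliminating $x,y,z$ from the jacobian ideal of $P_{2,2}$ together with the map itself. Once freeness holds, any liftable vector field lies in the $\O_5$-span of $\eta_1,\ldots,\eta_5$, and linear independence follows from the nonvanishing of the Saito determinant at generic points, finishing the proof. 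A practical backup, consistent with the paper's remark at the start of Section 5, is to run the \emph{Singular} presentation-matrix algorithm of Hernandes--Miranda--Pe\~nafort-Sanchis on $P_{2,2}$ and confirm that it produces exactly these five generators.
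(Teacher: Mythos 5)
Your proposal follows essentially the same route as the paper: the paper also relies on the discriminant $V$ of $P_{2,2}$ being a free divisor (so $\Lift(P_{2,2})=\Derlog(V)$ is free of rank $5$), obtains the defining equation of $V$ by eliminating $x,y,z$ from the ideal generated by the order-$5$ minors of the differential together with the graph equations, and then uses Singular (via syzygies) to produce the five generators, checking that they are linearly independent and liftable. Your additional step of certifying generation through Saito's criterion is a slightly more careful way of closing the argument the paper dispatches with ``it can be seen that all of them are linearly independent and liftable,'' but it is not a genuinely different proof.
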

\begin{proof} The discriminant (image of critical points set) $V$ of $P_{2,2}$ is a free divisor so $\Derlog(V)$ is generated by 5 vector fields. In order to obtain them we calculate the order 5 minors of the differential and eliminate the variables $x,y,z$ from the ideal generated by $2x^2-2y^2+u_1x-u_2y, u_1u_3+2U_3x-2yz, u_2u_3-2u_3y-2xz, X-(x^2+y^2+z^2+u_1x+u_2y), Y-(xy+u_3z)$ (the first three are the minors). Using the computer package Singular we obtain the defining equation of $V.$ Again using Singular we can compute with syzygies the generators of $\Derlog(V)$. It can be seen that all of them are linearly independent and liftable.
\end{proof}

Notice that there are only 4 vector fields with non zero linear parts.

\begin{prop}
When $n=p+1$, $(5,4)$ is the boundary of the extra-nice dimensions.
\end{prop}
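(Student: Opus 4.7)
The strategy mirrors the previous subsections. By Theorem~\ref{sectionnice}, a pair $(n,p)$ in the nice dimensions fails to be extra-nice exactly when some stable germ in $(\K^{n+1},0)\to(\K^{p+1},0)$ admits no $\A_e$-codimension $1$ hyperplane section. Since $(6,5)$ is in the nice dimensions, I propose to show that $(5,4)$ is not extra-nice by exhibiting $P_{2,2}$ as such an obstructing germ, and to check separately that $(4,3)$ does lie in the extra-nice dimensions. Corank $1$ stable germs always admit such sections by Proposition~\ref{iestrella}, so the only candidate obstruction in $(6,5)$ is the minimal stable unfolding $P_{2,2}$ produced above.

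For the failure at $(5,4)$, Proposition~\ref{equiv}(iv) reduces the question to finding a linear $L:\K^5\to\K$ with $\mathscr M_5\subseteq tL(\Lift(P_{2,2}))+\langle L\rangle$. Writing $L=aX+bY+cU_1+dU_2+eU_3$ and using the generators from Lemma~\ref{p22}, I would compute the $1$-jets of $L$ and of $tL(\eta_i)$ for $i=1,\dots,5$ as vectors in $\mathscr M_5/\mathscr M_5^2\cong\K^5$. The paper already notes that only $\eta_1,\eta_2,\eta_3,\eta_4$ have non-vanishing $1$-jets; direct inspection shows further that the $1$-jets of $tL(\eta_2)$, $tL(\eta_3)$, $tL(\eta_4)$ are $\K$-linear combinations of $X$ and $Y$ alone (of shapes $4dX-8cY$, $2eY$, $4cX-8dY$), while the pair $L,\,tL(\eta_1)$ contributes at most the single additional direction $cU_1+dU_2+eU_3$. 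Hence for every $(a,b,c,d,e)$ the $\K$-linear span of these five $1$-jets lies inside $\mathrm{span}_{\K}\{X,Y,\,cU_1+dU_2+eU_3\}$ and has dimension at most three. By Nakayama's lemma applied to the $\O_5$-submodule $tL(\Lift(P_{2,2}))+\langle L\rangle$ of $\mathscr M_5$, the desired inclusion is equivalent to these $1$-jets spanning all of $\mathscr M_5/\mathscr M_5^2\cong\K^5$; the rank bound of three rules this out, so no admissible $L$ exists and $(5,4)$ is not extra-nice.

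To close off the boundary, $(4,3)$ must be shown extra-nice. Since $(5,4)$ is in the nice dimensions, the converse direction of Theorem~\ref{sectionnice} applies and it suffices to check that every stable germ in $(\K^5,0)\to(\K^4,0)$ admits an $\A_e$-codimension $1$ hyperplane section. From the classification of simple rank $0$ algebras in the case $n\geq p$, the smallest corank $2$ algebra producing a stable unfolding with $n=p+1$ is exactly $P_{2,2}$, whose minimal stable germ lies in $(6,5)$; there is no rank $0$ corank $\geq 2$ source for stable germs in $(5,4)$. Hence every stable germ in $(\K^5,0)\to(\K^4,0)$ is of corank at most $1$, and Proposition~\ref{iestrella} provides the required sections. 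I expect the main obstacle to be the rank estimate of the middle paragraph: the bound of three must be read off carefully from the explicit form of the generators in Lemma~\ref{p22}, exploiting the structural fact that the three non-Euler generators with non-zero $1$-jet feed only the $\partial_{U_i}$ components, and do so with coefficients drawn only from $\mathrm{span}_{\K}\{X,Y\}$.
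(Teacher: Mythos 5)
Your proposal is correct and follows essentially the same route as the paper: use Proposition \ref{equiv}(iv) together with the generators of $\Lift(P_{2,2})$ from Lemma \ref{p22} to rule out an $\A_e$-codimension $1$ hyperplane section of $P_{2,2}$ (hence $(5,4)$ is not extra-nice), and then observe that $P_{2,2}$ is the smallest corank $2$ algebra in this range, so all stable germs in $(5,4)$ have corank $\leq 1$ and the converse of Theorem \ref{sectionnice} gives that $(4,3)$ is extra-nice. Your explicit rank-at-most-three computation of the $1$-jets $2aX+2bY+cU_1+dU_2+eU_3$, $4dX-8cY$, $2eY$, $4cX-8dY$ correctly fills in the step the paper summarizes as ``analyzing the vector fields in Lemma \ref{p22}.''
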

\begin{proof}
Analyzing the vector fields in Lemma \ref{p22} and by Proposition \ref{equiv} iv), $P_{2,2}$ does not admit a hyperplane section of $\mathscr A_e$-codimension 1, so $(5,4)$ is not in the extra-nice dimensions. In fact, it does not admit an $\mathscr A_e$-codimension 2 section either. Since $P_{2,2}$ is the simplest corank 2 algebra in $(n+1,n)$ and $(5,4)$ is in the nice dimensions, by the converse of Theorem \ref{sectionnice}, $(4,3)$ is in the extra-nice dimensions and so $(5,4)$ is the boundary of the extra-nice dimensions.
\end{proof}

\subsection{The case $n>p+1$}

From the discussion of the algebras, we know that there are no rank 0 simple algebras in this case. Here the boundary of the nice dimensions is given by the non simple $\A_e$-codimension 1 section of the stable unfolding of the simplest rank 0 germ for each case $(n+k,n)$, $k>2$.

In the case $(n+2,n)$ this algebra is $(x^2+y^2+z^2,y^2+\lambda z^2+w^2)$, whose stable unfolding is in $(9,7)$ and the boundary of the nice dimensions is $(8,6)$. By Proposition \ref{extraimplicanice} $(7,5)$ is not in the extra-nice dimensions. Since there are no stable corank 2 germs in $(7,5)$ (or below) the boundary of the extra-nice dimensions is $(7,5)$.

On the other hand, for $(n+k,n)$ with $k>2$, the boundary of the nice dimensions is given by the non simple $\A_e$-codimension 1 section of the stable unfolding of $x^3+y^3+z^3+\lambda xyz+\Sigma_{i=1}^k w_i^2$, whose stable unfolding is in $(10+k,8)$, and so this boundary is $(9+k,7)$. By Proposition \ref{extraimplicanice} $(8+k,6)$ is not in the extra-nice dimensions. Since there are no stable corank 2 germs in $(8+k,6)$ (or below) the boundary of the extra-nice dimensions is $(8+k,6)$. We remark that in $(8+k,6)$ there may be a non simple $\mathscr A_e$-codimension 2 corank 1 germ obtained as a section (by a codimension 2 plane) of the stable unfolding of the germ $x^3+y^3+z^3+\lambda xyz+\Sigma_{i=1}^k w_i^2$.

\subsection{Diagram of the boundary}

From all the above discussions we can draw the boundary of the extra-nice dimensions and compare it to the boundary of the nice dimensions. In Figure \ref{table} the dotted line represents the boundary of the nice dimensions and the continuous line is the boundary of the extra-nice dimensions.
\begin{figure}[!htb]
\centering
\includegraphics[width=1\linewidth]{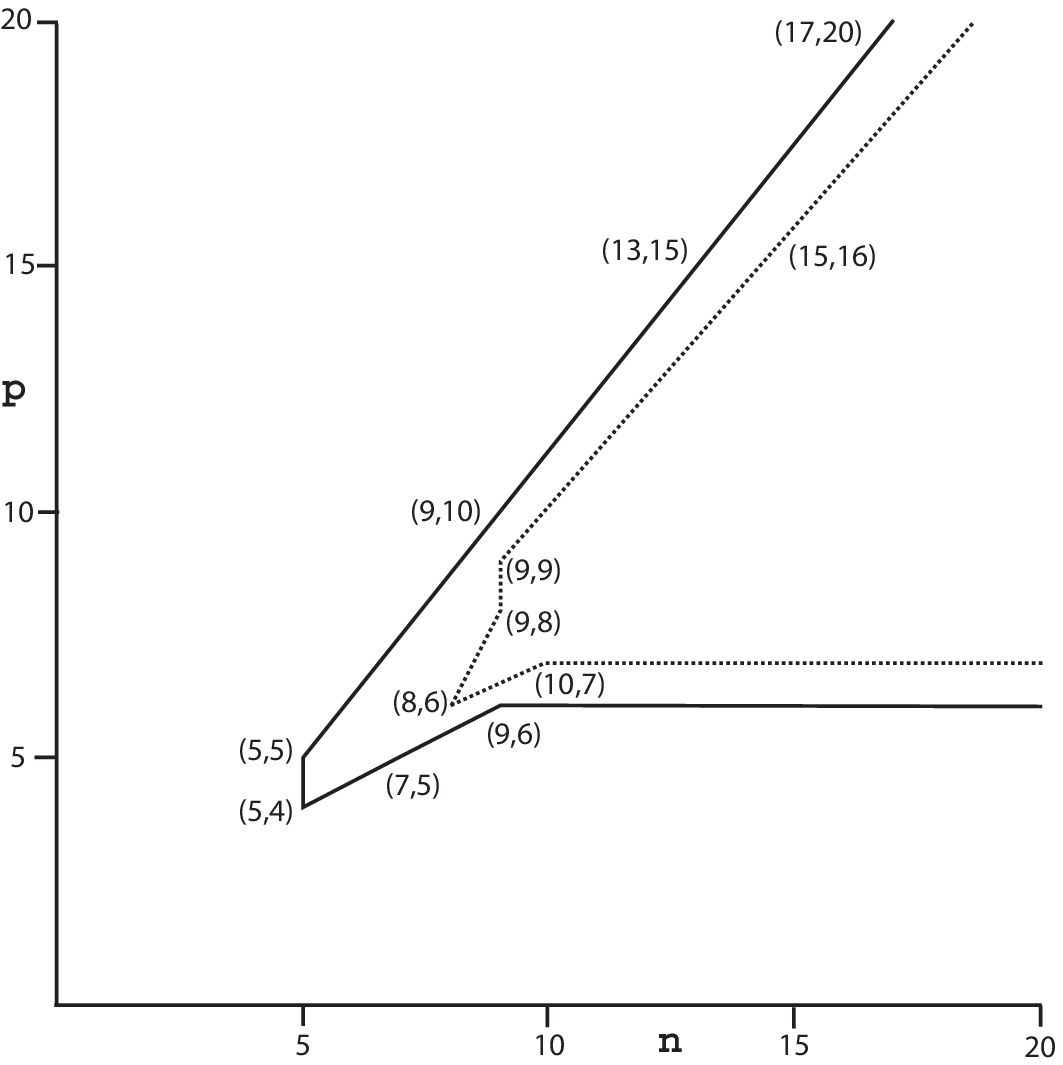}
\caption{} \label{table}
\end{figure}

\section{Locally stable 1-parameter families are dense in the extra-nice dimensions}\label{dense}

In analogy to Mather's characterization of the nice dimensions as those where stable maps are dense, we characterize the extra-nice dimensions as those where stable 1-parameter families are dense. Let $N$ be a compact manifold.

\begin{definition}
Let $F: N\times [0,1] \to P$ be a family such that $(F,t):N\times [0,1] \to P\times [0,1]$ is a stable map. Then $F$ is a {\it locally stable 1-parameter family } if $F_t: N \to P$ is a stable map for all $t \in [0,1]$ except for possibly
a finite number of values $\{t_1,\ldots, t_k\}$ and the non-stable singularities of $F_{t_i}$ are a finite number of points $x_j \in N,$ such that the
map $(F, t) : N\times [0,1] \to P\times [0,1]$ is a locally $\mathcal A_e$-versal unfolding of $F_{t_i}$ for all non-stable points $x_j.$
\end{definition}

From this definition it follows that $F_{t_i}:(N,x_j)\rightarrow(P,F_{t_i}(x_j))$ has $\mathscr A_e$-codimension 1. Locally stable 1-parameter families are known as pseudo-isotopies in \cite{cerf} or \cite{igusa}.



Suppose that $(n,p)$ is in the nice dimensions. We define a  stratification of $J^k(n,p)$ by $\A$-orbits. Let $\Lambda(n,p)=\{\sigma\in J^k(n,p)/$ $\A^k$-codimension of $\sigma \geq n+2\}.$ $\Lambda(n,p)$ is an algebraic set, hence it admits an $\A^k$-invariant stratification. The set $J^k(n,p)\backslash \Lambda(n,p)$ has a finite stratification by $\A$-orbits of  codimension less than or equal to $n+1$ (i.e. orbits of germs of $\mathscr A_e$-codimension less than or equal to 1). Since this stratification is $\A$-invariant it induces an stratification  $S(N,P)$ of $J^k(N,P)$.

\begin{lem}\label{lambda}
Let $(n,p)$ be in the nice dimensions. $(n,p)$ is in the extra-nice dimensions if and only if the set $\Lambda(n,p)$ has codimension bigger than or equal to $n+2.$
\end{lem}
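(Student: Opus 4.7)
The plan is to use $\Lambda(n,p)$ itself as the candidate witness, observing first that it is $\A$-invariant by construction and Zariski closed (the set of $k$-jets of $\A$-codimension at least $n+2$ is closed by upper semi-continuity of the $\A^k$-orbit codimension under the algebraic $\A^k$-action on $J^k(n,p)$).

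For the forward direction I would assume $(n,p)$ is extra-nice with witness $\Lambda'$. Any $\A$-orbit sitting inside $\Lambda'$ has dimension at most $\dim J^k(n,p)-\cod\Lambda'\leq\dim J^k(n,p)-(n+2)$, hence codimension $\geq n+2$. Thus every orbit of codimension $\leq n+1$ lies in the complement $J^k(n,p)\setminus\Lambda'$, which by assumption is a finite union of $\A$-orbits, so the $\A$-orbits of codimension $\leq n+1$ are finite in number. By definition $\Lambda(n,p)$ collects all orbits of codimension $\geq n+2$; each such orbit is either already in $\Lambda'$ or among the finitely many leftover orbits of the complement, so $\Lambda(n,p)\subseteq\Lambda'\cup(\text{finitely many orbits of codim}\geq n+2)$, a union of pieces each of codimension $\geq n+2$, forcing $\cod\Lambda(n,p)\geq n+2$.

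For the converse I would take $\Lambda=\Lambda(n,p)$ and only need to verify that the complement $Y:=J^k(n,p)\setminus\Lambda(n,p)=\{\sigma:\A\text{-cod}(\sigma)\leq n+1\}$ is a finite union of $\A$-orbits. Split $Y=Y_0\cup Y_1$ with $Y_0$ the stable jets (codimension $\leq n$) and $Y_1$ the jets of $\A_e$-codimension exactly $1$. Mather's characterization of the nice dimensions gives finitely many $\mathscr K$-orbits outside $\Pi(n,p)$, and each stable $\mathscr K$-orbit equals a single $\A$-orbit (since $T\A f=T\mathscr K f$ for stable germs, Theorem \ref{open}), so $Y_0$ is a finite union of $\A$-orbits. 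For $Y_1$, Proposition \ref{cod1simple} provides simplicity of every $\A_e$-codimension 1 germ, excluding $\A$-moduli along $Y_1$; combined with the hypothesis, which rules out any continuous family of $\A$-orbits of codimension $\geq n+2$ sitting inside a codimension $(n+1)$ subset of $J^k$ (such a family would be contained in $\Lambda(n,p)$ and drop its codimension to $n+1$), each irreducible component of the Zariski closure of $Y_1$ carries a dense $\A$-orbit of codimension $n+1$, and finiteness of the number of irreducible components of an algebraic set yields finiteness of $Y_1$ as a union of $\A$-orbits.

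The main technical obstacle is this last step: upgrading the local simplicity provided by Proposition \ref{cod1simple} to the global finiteness of $\A$-orbits of $\A_e$-codimension $1$, for which one must combine simplicity with algebraicity of the $\A^k$-action and the codimension hypothesis on $\Lambda(n,p)$ so as to exclude moduli both within $Y_1$ itself and among the orbits of codimension $\geq n+2$ that accumulate onto $Y_1$.
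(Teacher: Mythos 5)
Your argument is correct and follows essentially the same route as the paper's (very terse) proof, which observes that a bad set of codimension $\le n+1$ would be foliated by infinitely many $\A$-orbits, and that in the nice dimensions there are only finitely many $\A$-orbits of codimension $\le n+1$ --- the finiteness you work to establish via Proposition \ref{cod1simple} together with algebraicity of the $\A^k$-action. One minor imprecision: a stable $\mathscr K$-orbit is not literally a single $\A$-orbit (it generally also contains non-stable, even non-$\A$-finite, jets); what you need, and what is true, is that the stable jets inside each of the finitely many $\mathscr K$-orbits of codimension $\le n$ form a single open $\A$-orbit, so your set $Y_0$ is still a finite union of orbits.
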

\begin{proof}
If the codimension of $\Lambda(n,p)$ is less than or equal to $n+1$, then it is foliated by infinite $\A$-orbits and you cannot find a subset of codimension greater than $n+1$ such that its complement is a finite number of $\A$-orbits. The ``if" part is trivial by definition since in the nice dimensions there are a finite number of $\A$-orbits of codimension less than or equal to $n+1$.
\end{proof}

Consider a 1-parameter family $F: N\times [0,1] \to P$ and consider the partial jet extension $$j_1^kF:N\times[0,1]\rightarrow J^k(N,P)$$ given by $j_1^kF(x,t)=j^kF_t(x)$.

\begin{lem}\label{prevteo}
If $F:N\times [0,1] \to P$ is a locally stable 1-parameter family then $j_1^kF$ is transverse to the stratification $S(N,P)$. The converse holds if $(n,p)$ is in the extra-nice dimensions.
\end{lem}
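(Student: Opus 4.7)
The plan is to verify transversality stratum by stratum, exploiting the structure of $S(N,P)$. Outside $\Lambda(n,p)$ the stratification is by $\A$-orbits of codimension at most $n+1$. By Wilson's formula $\Aecod(f)=\Acod(f)-n$ for non-stable monogerms, together with the identity $T\A f=T\mathscr K f$ for stable germs, these split into two types: strata of codimension at most $n$, consisting of stable germs (with $\A$-orbit equal to $\mathscr K$-orbit), and strata of codimension exactly $n+1$, consisting of germs of $\A_e$-codimension $1$. The source $N\times[0,1]$ of $j_1^kF$ has dimension $n+1$, so transversality to a stratum of codimension $c$ forces the preimage to have dimension $n+1-c$, and to be empty when $c>n+1$.

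For the implication $(\Rightarrow)$, fix $(x_0,t_0)\in N\times[0,1]$ and let $W$ be the stratum of $S(N,P)$ through $\sigma_0=j^kF_{t_0}(x_0)$. If $F_{t_0}$ is stable at $x_0$, then by Mather's infinitesimal criterion $j^kF_{t_0}$ is already transverse to the $\mathscr K$-orbit of $\sigma_0$, which here equals its $\A$-orbit $W$; hence $j_1^kF$ is transverse at $(x_0,t_0)$ using only the $T_{x_0}N$ direction. Otherwise $(x_0,t_0)$ must be one of the exceptional pairs $(x_j,t_i)$, where $F_{t_i}$ has $\A_e$-codimension $1$ at $x_j$ and $(F,t)$ is a locally $\A_e$-versal unfolding. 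One-parameter $\A_e$-versality means that $\partial F/\partial t|_{t_i}$ represents a generator of the one-dimensional space $\theta(F_{t_i})/T\A_e F_{t_i}$; passing to $k$-jets, which is faithful modulo $\mathscr M_n^{k+1}\theta(f)$ for $k$ larger than the determinacy bound recalled in Section~\ref{section-notation}, $d(j_1^kF)(\partial/\partial t)$ covers the normal direction to the $\A^k$-orbit $W$ at $\sigma_0$. Together with $d(j^kF_{t_i})(T_{x_j}N)$, which covers the tangent directions to $W$ inherited from the $\A$-action, this yields the required transversality.

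For the converse $(\Leftarrow)$, assume $(n,p)$ is in the extra-nice dimensions and $j_1^kF\pitchfork S(N,P)$. By Lemma~\ref{lambda}, $\cod\Lambda(n,p)\geq n+2>\dim(N\times[0,1])$, so transversality forces $j_1^kF(N\times[0,1])\cap\Lambda(n,p)=\emptyset$; every jet $\sigma=j^kF_t(x)$ then lies in an $\A$-orbit of codimension at most $n+1$. The preimage in $N\times[0,1]$ of the union of the codimension $n+1$ strata is a zero-dimensional closed subset, hence a finite set $\{(x_j,t_i)\}$ by compactness of $N$. At each such point, the only way for $d(j_1^kF)$ to hit a complement of $T_{\sigma_0}W$ is for $d(j_1^kF)(\partial/\partial t)$ to cover the one-dimensional normal direction to the $\A$-orbit, which is precisely the infinitesimal condition for $(F,t)$ to be an $\A_e$-versal unfolding of $F_{t_i}$ at $x_j$. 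At every other $(x,t)$, $\sigma$ sits in a stratum of codimension at most $n$, and transversality of $j_1^kF$ (with $\partial/\partial t$ automatically tangent to $W$ along the trivially unfolded direction) reduces to transversality of $j^kF_t$ to the Mather stratification at $x$, which is equivalent to stability of $F_t$ at $x$. Thus $F$ is a locally stable 1-parameter family.

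The main obstacle I expect is making rigorous the identification, at the level of $k$-jets, between one-parameter $\A_e$-versality of the unfolding $(F,t)$ and transversality of $j_1^kF$ to the codimension $n+1$ $\A^k$-orbit stratum. This amounts to showing that a sufficiently high $k$ captures $T\A F_{t_i}$ modulo $\mathscr M_n^{k+1}\theta(F_{t_i})$, which is guaranteed by the Mather--Gaffney determinacy bounds recalled in Section~\ref{section-notation}; once this is in place, the two directions become parallel calculations in a common infinitesimal framework.
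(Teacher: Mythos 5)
Your proof is correct and follows essentially the same route as the paper's: both rest on the equivalence between local $\A_e$-versality of $(F,t)$ and transversality of $j_1^kF$ to the $\A$-orbit (which the paper simply cites from the Mond--Nu\~no-Ballesteros book and Wall's transversality paper rather than re-deriving infinitesimally), on the codimension bound for $\Lambda(n,p)$ from Lemma \ref{lambda} forcing the image to avoid $\Lambda(N,P)$, and on compactness of $N$ to get finitely many exceptional points. One small quibble: in the converse, stability of $F_t$ at the non-exceptional points follows already from the jet lying in an $\A$-orbit of codimension at most $n$ (your own observation via Wilson's formula), not from any reduction of transversality of $j_1^kF$ to transversality of $j^kF_t$ --- the latter reduction is not valid in general since the $\partial/\partial t$ direction need not be tangent to the orbit --- but this does not affect the argument.
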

\begin{proof}
First of all, the Versality Theorem 3.3 in \cite{Wall} (or 4.1.4 in \cite{nunomond}) says that an unfolding $(F,t)$ of $f$ is locally $\A_e$-versal if and only if $T\A_e(f)+Sp\{\frac{\partial F}{\partial t}\}=\theta(f)$. Based on this it can be seen (Theorem 4.1.11 in \cite{nunomond} or Proposition 2.2 in \cite{walltransversality} for $\mathscr K$-equivalence) that $(F,t)$ is locally $\A_e$-versal if and only if $j_1^kF$ is transversal to the $\A$-orbit of $f$.

If $F$ is a locally stable 1-parameter family, by definition $(F,t)$ is an $\A_e$-versal unfolding of $F_{t_i}$ at the points where it has non stable singularities (and these are of $\A_e$-codimension 1 only). By the above result this means that $j_1^kF$ is transversal to all $\mathscr A$-orbits of codimension less than or equal to $n+1$. Since $F$ does not have singularities of $\A_e$-codimension greater than 1 for any $t$, $j_1^kF$ is also transversal to $\Lambda(N,P)$. Therefore, $j_1^kF$ is transversal to the stratification $S(N,P)$.

Conversely, if $(n,p)$ is in the extra-nice dimensions, $\Lambda(N,P)$ has codimension greater than or equal to $n+2$, so transversality to the stratification $S(N,P)$ means that $j_1^kF$ is transversal to all $\A$-orbits of codimension less than or equal to $n+1$ and avoids $\Lambda(N,P)$. Therefore, $(F,t)$ unfolds versally all the $F_{t_i}$ which have non stable singularities of $\A_e$-codimension 1. Since $N$ is compact the points $x_j$ where $F_{t_i}$ is non stable are finite, so $F$ is a locally stable 1-parameter family.
\end{proof}

\begin{teo}\label{teodense}
Let $N$ and $P$ be manifolds of dimension $n$ and $p$, with $(n+1,p+1)$ nice dimensions. The subset of locally stable 1-parameter families in $C^{\infty}(N\times [0,1],P)$ is dense if and only if $(n,p)$ is in the extra-nice dimensions.
\end{teo}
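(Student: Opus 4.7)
The backbone of the proof is the equivalence established in Lemma~\ref{prevteo}: local stability of a 1-parameter family $F$ is the same as transversality of the partial jet extension $j_1^k F\colon N\times[0,1]\to J^k(N,P)$ to the stratification $S(N,P)$, coupled with Lemma~\ref{lambda}, which says that extra-niceness is equivalent to $\cod_{J^k(n,p)}\Lambda(n,p)\geq n+2$. So the strategy is to use Thom-type transversality in one direction, and in the other an explicit construction of a perturbation-stable family hitting a bad stratum.

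For the sufficiency $(\Leftarrow)$, suppose $(n,p)$ is extra-nice. Then $(n,p)$ is nice, so the complement of $\Lambda(n,p)$ in $J^k(n,p)$ is a finite union of $\mathscr A$-orbits of codimension $\leq n+1$, and the algebraic set $\Lambda(n,p)$ admits a locally finite $\mathscr A$-invariant Whitney stratification whose strata have codimension $\geq n+2$. The induced stratification $S(N,P)$ of $J^k(N,P)$ is locally finite and $\mathscr A$-invariant. Applying a parametric Thom transversality theorem to the map $F\mapsto j_1^k F$ (whose source $N\times[0,1]$ has dimension $n+1$), we obtain a residual, hence dense, subset of $C^\infty(N\times[0,1],P)$ consisting of $F$ for which $j_1^k F$ is transverse to every stratum of $S(N,P)$. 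By Lemma~\ref{prevteo}, every such $F$ is a locally stable 1-parameter family.

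For the necessity $(\Rightarrow)$, I would argue by contrapositive. Suppose $(n,p)$ is not extra-nice. By Lemma~\ref{lambda}, $\cod_{J^k(n,p)}\Lambda(n,p)\leq n+1$, so $\Lambda(n,p)$ contains an $\mathscr A$-orbit $\Sigma$ of codimension $c\leq n+1$ whose representatives have $\mathcal A_e$-codimension $\geq 2$. Pick a germ $g\colon(\K^n,0)\to(\K^p,0)$ with jet in $\Sigma$ and build an $F_0\colon N\times[0,1]\to P$ which, near some $(x_0,t_0)$, is locally given by $g$ in the source variable (constant in $t$); then $j_1^k F_0(x_0,t_0)\in\Sigma$. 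Because $c\leq n+1=\dim(N\times[0,1])$, one can further perturb $F_0$ so that $j_1^k F_0$ meets $\Sigma$ transversally at $(x_0,t_0)$, and transversal intersection is an open condition in the Whitney topology. Thus an open neighbourhood of this perturbation consists of $F$ whose partial jet extension still meets $\Sigma$, producing some $t$ for which $F_t$ has a singularity of $\mathcal A_e$-codimension $\geq 2$. Such $F$ cannot be a locally stable 1-parameter family, and no locally stable family lies in this open set, contradicting density.

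The main obstacle is the transversality argument for the \emph{partial} jet extension $j_1^k F$ rather than the full jet extension of the unfolding $(F,t)$; one must ensure that transversality of $j_1^k F$ to the $\mathscr A$-strata of $J^k(n,p)$ is indeed the correct infinitesimal condition for the $\mathcal A_e$-versality of $(F,t)$ at each non-stable slice $F_{t_i}$. This is essentially the content of Lemma~\ref{prevteo}, which reduces the versality criterion of Wall to a transversality statement. A minor technicality is the boundary of $[0,1]$ in the application of Thom's theorem, which is handled in the usual way by extending slightly past $0$ and $1$ and restricting. Compactness of $N$ is used to ensure that the non-stable fibres $F_{t_i}$ have only finitely many non-stable points, as required by the definition of locally stable family.
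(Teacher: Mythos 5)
Your sufficiency direction is essentially the paper's argument: Thom transversality for the partial jet extension, avoidance of $\Lambda(N,P)$ because its codimension exceeds $\dim(N\times[0,1])=n+1$, finiteness of the remaining $\A$-orbits, and Lemma~\ref{prevteo} to translate transversality into local stability. That half is fine.

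The necessity direction contains a genuine error. You assert that if $(n,p)$ is not extra-nice then ``$\Lambda(n,p)$ contains an $\A$-orbit $\Sigma$ of codimension $c\leq n+1$ whose representatives have $\A_e$-codimension $\geq 2$.'' This is self-contradictory: for a monogerm the codimension of the $\A$-orbit in $J^k(n,p)$ equals $\Acod(f)=\Aecod(f)+n$, so an orbit of codimension $\leq n+1$ has $\Aecod\leq 1$ and by definition does not lie in $\Lambda(n,p)$. The actual situation when extra-niceness fails (Lemma~\ref{lambda}) is that $\Lambda(n,p)$ is a union of \emph{infinitely many} $\A$-orbits, each of codimension $\geq n+2$, whose union has codimension $\leq n+1$ --- a modal stratum, not a single large orbit. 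Your subsequent step (``perturb $F_0$ so that $j_1^kF_0$ meets $\Sigma$ transversally, and transversality is open'') therefore rests on an object that does not exist; moreover, even replacing $\Sigma$ by a top-dimensional stratum of a Whitney stratification of the closed algebraic set $\Lambda(n,p)$, you would still need to exhibit a family whose partial jet extension meets that stratum transversally \emph{and nonemptily}, since a perturbation achieving transversality to a locally closed stratum can a priori empty the intersection. The paper closes exactly this gap constructively: it uses the stable germs $\widetilde F=(F,t)$ in $(n+1,p+1)$ produced in Section 5 (e.g.\ of type $B_{3,3}$), all of whose hyperplane sections $f_0$ have $\Aecod\geq 2$, so that $F$ is explicitly not locally stable while $j_1^kF$ is transverse to $S(N,P)$ and meets $\Lambda(N,P)$; openness of transverse nonempty intersection with the closed set $\Lambda(N,P)$ then yields the required open set of non-locally-stable families. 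Your argument needs either this explicit construction or a careful replacement of the single-orbit claim by a stratum-level transversality argument with a nonemptiness certificate.
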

\begin{proof}
The pair $(n,p)$ is in the extra-nice dimensions if, by definition, there exists a bad set $\Lambda(n,p)\subset J^k(n,p)$ of codimension greater than $n+1$ such that its complement is a finite number of $\mathscr A$-orbits. This set induces a bad set $\Lambda(N,P)$ in $J^k(N,P)$ of codimension greater than $n+1$. Thus, for a generic family $F\in C^{\infty}(N\times [0,1],P)$, $j_1^kF(N\times [0,1])\cap \Lambda(N,P)=\emptyset$. By Thom's transversality theorem, the set of families $F$ such that $j^k_1F$ is transversal to any $\A$-orbit of codimension less than or equal to $n+1$ is also a residual set. Since there are finite $\A$-orbits of codimension less than or equal to $n+1$, the set of families $F$ such that their partial jet extension is transverse to $S(N,P)$ is a finite intersection of residual sets and so is residual. Equivalently, by Lemma \ref{prevteo}, the set of locally stable 1-parameter families in $C^{\infty}(N\times [0,1],P)$ is dense.

Now suppose $(n,p)$ is not in the extra-nice dimensions. Then the codimension of $\Lambda(N,P)$ is less than or equal to $n+1$. In Section 5, for any $(n,p)$ in the boundary of the extra-nice dimensions such that $(n+1,p+1)$ is nice dimensions, we construct a stable germ $\widetilde{F}$ in $(n+1,p+1)$ such that any section of it $f_0$ is not simple and such that $\widetilde{F}=(F,t)$ is a 1-parameter stable unfolding of $f_0$. By taking a trivial unfolding of these stable germs we obtain stable germs with this property in any pair of dimensions outside the extra-nice dimensions. Considering one of the sections of these stable germs in $(n,p)$ and its deformation $F$, $F$ is clearly not a locally stable 1-parameter family but $j_1^kF(N\times[0,1])\cap\Lambda(N,P)\neq \emptyset$ and $j_1^kF$ is transversal to $S(N,P)$. There is a sufficiently small neighbourhood of $F$ in $C^{\infty}(N\times[0,1],P)$ such that any $G$ in that neighbourhood satisfies that $j_1^kG$ is transversal to $S(N,P)$ and $j_1^kG(N\times[0,1])\cap\Lambda(N,P)\neq \emptyset$. Therefore, there is an open set of non locally stable 1-parameter families and so the set of locally stable 1-parameter families is not dense.
\end{proof}

A  summary of our main results in Theorems \ref{sectionnice} and \ref{teodense} we have the following

\begin{coro} Let $(n+1,p+1)$ be in the nice dimensions. The following are equivalent
\begin{enumerate}
\item[i)] $(n,p)$ is in the extra-nice dimensions,
\item[ii)] $cod_{J^k(n,p)}\Lambda(n,p)\geq n+2$,
\item[iii)] the subset of locally stable 1-parameter families in $C^{\infty}(N\times [0,1],P)$ is dense,
\item[iv)] every stable germ $F:(\mathbb K^{n+1},0)\rightarrow(\mathbb K^{p+1},0)$ admits a hyperplane $\mathscr A_e$-codimension 1 section $f:(\mathbb K^n,0)\rightarrow(\mathbb K^p,0).$
\end{enumerate}
\end{coro}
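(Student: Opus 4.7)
The corollary is essentially a bookkeeping statement: each of the four conditions has already been shown equivalent (or implied/implied-by) to the defining property of the extra-nice dimensions earlier in the paper, so the plan is simply to exhibit the cycle of implications by invoking those results.

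My plan is to prove the equivalences (i)$\Leftrightarrow$(ii), (i)$\Leftrightarrow$(iii), and (i)$\Leftrightarrow$(iv) separately, each as a direct citation, and conclude the full four-way equivalence by transitivity. Concretely: (i)$\Leftrightarrow$(ii) is exactly Lemma~\ref{lambda}, which requires only that $(n,p)$ lies in the nice dimensions (a consequence of $(n+1,p+1)$ being in the nice dimensions, by monotonicity of the nice dimensions under the map $(n,p)\mapsto(n+1,p+1)$). The equivalence (i)$\Leftrightarrow$(iv) is the content of Theorem~\ref{sectionnice}: the forward direction requires only that $(n,p)$ be extra-nice (to deduce the existence of $\mathscr A_e$-codimension 1 hyperplane sections for every stable $F$ in one dimension up), while the converse is precisely where the hypothesis $(n+1,p+1)\in$ nice dimensions is used. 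Finally, (i)$\Leftrightarrow$(iii) is Theorem~\ref{teodense}, whose statement matches (iii) verbatim and whose hypothesis is exactly $(n+1,p+1)\in$ nice dimensions.

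Thus the proof is a two-line argument: assemble the three biconditionals and note that they all share the same standing hypothesis $(n+1,p+1)\in$ nice dimensions. The only subtlety, which I would call out explicitly, is that (i)$\Rightarrow$(ii) in Lemma~\ref{lambda} and (i)$\Rightarrow$(iv) in Theorem~\ref{sectionnice} do not need the stronger assumption, but the reverse implications do; the corollary is stated under the uniform assumption $(n+1,p+1)\in$ nice dimensions precisely so that all three biconditionals hold simultaneously.

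There is no real obstacle here, since all the work is already done in the preceding sections; the one thing to verify carefully is that the standing hypothesis of each cited result is indeed implied by $(n+1,p+1)\in$ nice dimensions. This is immediate: $(n+1,p+1)\in$ nice dimensions forces $(n,p)\in$ nice dimensions by the elementary fact that nice dimensions are closed under $(n,p)\mapsto(n-1,p-1)$ (since $\mathscr K$-codimensions are preserved under trivial unfoldings), which is exactly the input needed for Lemma~\ref{lambda} and is already built into Theorems \ref{sectionnice} and \ref{teodense}.
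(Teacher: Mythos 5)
Your proposal is correct and matches the paper's own treatment: the corollary is presented there explicitly as a summary of Lemma~\ref{lambda}, Theorem~\ref{sectionnice} and Theorem~\ref{teodense}, assembled exactly as you describe under the standing hypothesis that $(n+1,p+1)$ is in the nice dimensions (which, as you note, implies $(n,p)$ is nice, the input needed for Lemma~\ref{lambda}). No further comment is needed.
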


\section{Codimension of non-simple germs}\label{wall}

In this section we answer partially a question posed by Wall to the first author during his talk at the workshop on "Singularities in Generic Geometry and Applications IV" held in Edinburgh in 2013: what is the codimension of the non-simple germs?

Let $NS$ denote the $\mathscr A$-invariant subset of $J^l(n,p)$
composed of all non $\mathscr A$-simple orbits. If $(n,p)$ is in the
nice dimensions, by Proposition \ref{cod1simple}, all $\mathscr
A_e$-codimension 1 germs are simple, so if a germ is not simple its
$\mathscr A_e$-codimension is at least 2 ($\mathscr A$-codimension
$n+2$). Therefore $\cod_{J^l(n,p)}(NS)\geq n+1$. Similarly, from the
definition of the extra-nice dimensions, if $(n,p)$ is in the extra
nice dimensions, then $\cod_{J^l(n,p)}(NS)\geq n+2$. In fact, if
$(n,p)$ is in the nice dimensions but not in the extra-nice
dimensions, Remark \ref{codnonsimple} shows that
$\cod_{J^l(n,p)}(NS)\leq n+1$, so $\cod_{J^l(n,p)}(NS)=n+1$.

This naturally leads to the following definition

\begin{definition}
The pair $(n,p)$ is said to be in the $\Delta_m$-nice dimensions if, for large enough $k$, there is an $\A$-invariant subset $\Lambda$ of $J^l(n,p)$, of codimension greater than $n+m$, whose complement is a finite union of $\mathscr A$-orbits.
\end{definition}

$\Delta_1$-nice dimensions are the extra-nice dimensions and $\Delta_0$-nice dimensions are the nice dimensions. With this definition, if $(n,p)$ is in the $\Delta_m$-nice dimensions but not in the $\Delta_{m+1}$-nice dimensions, then $\cod_{J^l(n,p)}(NS)=n+m+1$.

\begin{ex}
In the case $n=p$, the Thom-Levine example in the introduction is an example in the boundary of the $\Delta_0$-nice dimensions of an $\A_e$-codimension 1 germ which is not simple. It has corank 3. Inside the $\Delta_0$-nice dimensions all $\A_e$-codimension 1 germs are simple.

From Theorem \ref{cod2notsimple} we have an example in the boundary of the $\Delta_1$-nice dimensions of an $\A_e$-codimension 2 germ which is not simple. It has corank 2. Inside the $\Delta_1$-nice dimensions all $\A_e$-codimension 2 germs are simple.

From \cite{riegerruasmdef} and \cite{nishi} we know that if $n\geq 3$, then a germ $f:(\K^n,0)\rightarrow (\K^n,0)$ with $m_0(f)\geq n+3$ is not simple. In fact, in $(3,3)$ we find the germ $(x,y,z^6+yz^2+xz)$ which has $\A_e$-codimension 3 and is not simple with 1 modal parameter. It has corank 1. From \cite{rieger} and \cite{riegerunimodal} we know that all $\A_e$-codimension 3 germs in $(2,2)$ are simple. This means that $(3,3)$ is the boundary of the $\Delta_2$-nice dimensions. Inside the $\Delta_2$-nice dimensions all $\A_e$-codimension 3 germs are simple.

Finally, in $(2,2)$ there is an $\A_e$-codimension 4 germ which is not simple. It can be found in \cite{rieger}, $(x,xy+y^6\pm y^9+\alpha y^9)$. Since in $(1,1)$ all germs are simple, this means that $(2,2)$ is the boundary of the $\Delta_3$-nice dimensions. In fact, $(2,2)$ is the boundary of the $\Delta_m$-nice dimensions for $m\geq 3$!

So we get a stratification of the $n=p$ dimensions by $(9,9)\supset(5,5)\supset(3,3)\supset(2,2)$.
\end{ex}


\end{document}